\documentclass[12pt, a4paper]{article}
\usepackage[utf8]{inputenc}
\usepackage[english]{babel}
\usepackage{amsmath,verbatim,amsthm,mathrsfs,stmaryrd}
\usepackage{indentfirst}
\usepackage{amstext}
\usepackage{enumerate}
\usepackage{amsfonts}
\usepackage{textcomp}
\usepackage{amssymb}
\usepackage[dvips]{graphicx}
\usepackage{setspace}
\usepackage[dvipsnames]{xcolor}
\usepackage{graphicx}
\graphicspath{ {./images/} }
\usepackage[all]{xy}

\newcommand {\N}{\mathbb{N}}
\newcommand{\DD}{\mathsf{D}}

\newcommand {\Z}{\mathbb{Z}}
\newcommand{\homeo}{\operatorname{Homeo}}
\newcommand{\skel}[1]{^{(#1)}}

\newcommand {\GG}{\mathcal{G}}
\newcommand {\Gs}{G_{\sigma}}               

\newcommand {\G}{\mathcal{G}}

\newcommand {\La}{\Lambda}
\newcommand {\la}{\lambda}
\newcommand {\al}{\alpha}
\newcommand {\ap}{\alpha^{\prime}}
\newcommand {\be}{\beta}
\newcommand {\bp}{\beta^{\prime}}

\newcommand\supp{\mathrm{supp}}

\usepackage{geometry}
\geometry{left=2cm,right=2cm,top=2cm,bottom=2.4cm}

\DeclareMathOperator{\id}{id}

\newtheorem{teorema}{Theorem}[section]
\newtheorem{lema}[teorema]{Lemma}

\newtheorem{definicao}[teorema]{Definition}
\newtheorem{proposicao}[teorema]{Proposition}
\newtheorem{exemplo}[teorema]{Example}
\newtheorem{remark}[teorema]{Remark}


\begin{document}

\author{Gilles Gon\c{c}alves de Castro\footnote{Partially supported by Capes-PrInt Brazil grant number 88881.310538/2018-01.}, Daniel Gon\c{c}alves\footnote{Partially supported by Conselho Nacional de Desenvolvimento Cient\'ifico e Tecnol\'ogico (CNPq) grant numbers 304487/2017-1 and 406122/2018-0  and Capes-PrInt grant number 88881.310538/2018-01 - Brazil.}, and Daniel W van Wyk\footnote{This study was financed in part by the Coordenação de Aperfeiçoamento de Pessoal de Nível Superior - Brasil (CAPES) - Finance Code 001.}}

\title{Topological full groups of ultragraph groupoids as an isomorphism invariant}

\maketitle

\begin{abstract}
We prove two isomorphism-invariance theorems for groupoids associated with ultragraphs. These theorems
characterize ultragraphs for which the topological full group of an associated groupoid is an isomorphism invariant. These results extend those of graph groupoids to ultragraph groupoids while providing another concrete example where the topological full group of a groupoid is a complete isomorphism invariant.

\end{abstract}

\vspace{0.5pc}

{\bf Keywords: Full groups, ultragraph algebras, isomorphism of groupoids} 

{\bf MSC2010:} 22A22 (Primary), 54H20, 46L55, 37B10 (Secondary)


\section{Introduction}

Ultragraphs are versatile combinatorial objects that encompass graphs. Introduced by Mark Tomforde in \cite{MR2050134} as an object to unify the study of graph and Exel-Laca algebras, ultragraphs have connections with branching systems \cite{MR3554458}, infinite alphabet shift spaces \cite{MR3600124, MR3938320, MR3918205}, chaos \cite{gonccalves2018li,gonccalves2019ultragraph}, Leavitt path algebras \cite{imanfar2017leavitt}, KMS states \cite{MR3856223}, AF algebras \cite{MR2541281}, groupoids \cite{MR2457327}, topological quivers \cite{MR2413313}, and are also interesting objects to study on their own. In this paper we  will focus on the connections with topological dynamics and groupoids. More precisely, we will use the recent description of topological full groups of ample groupoids with locally compact unit spaces given in \cite{MR3950815} to describe isomorphism of ultragraph groupoids (under Condition~(RFUM)) in terms of isomorphism of their respective topological full groups. 

The use of topological full groups as invariants for a certain form of equivalence between orbits of dynamical systems ranges from its application in Cantor minimal systems \cite{MR1710743}, to ample groupoids \cite{MR2876963, MR3950815}, passing through Cuntz-Krieger algebras \cite{MR3668049}  and graph algebras \cite{MR3950815} (to name a few). In the groupoid setting, recent results connect continuous orbit equivalence, diagonal preserving isormorphism and groupoid isomorphism (see \cite{carlsen2017reconstruction} for example) and in \cite{MR3950815} the authors add topological full groups (of groupoids with locally compact unit space) to the list. 

Beyond the general study of full groups, it is important to study them in specific cases. In fact, in \cite{MR3950815} the authors mention in the introduction that their initial goal was to study the topological full groups of general graph groupoids and add them to the list of invariants for continuous orbit equivalence between graphs. Our paper moves in this direction, as we characterize the full groups associated to ultragraph groupoids and use the general results in \cite{MR3950815} to describe the topological full groups of ultragraph groupoids as invariants for isormorphism of such groupoids. As with the graph case, due to results in \cite{carlsen2017reconstruction, phdthesisFelipe, tascagoncalves}, our results connect topological full groups  with continuous orbit equivalence of ultragraph shift spaces and diagonal preserving isomorphism between ultragraph C*-algebras. 

The paper is organized in the following way. Section \ref{sec1} contains basic definitions and background on ultragraphs, the edge shift space of an ultragraph, and ample groupoids and their topological full groups. 

In Section \ref{sec:groupoid} we associate a topological groupoid with an ultragraph that satisfies condition (RFUM). We show that the topology of the associated groupoid has a basis of compact open sets, so that the associated groupoid is ample. In Proposition \ref{prop:isoltedpoints} we characterize the isolated points of these groupoids and in Proposition \ref{prop:effectiveCharac} we describe when the groupoid is effective. We conclude Section~\ref{sec:groupoid} with a characterization of the elements of the topological full group of the groupoid associated with an ultragraph  (Proposition~ \ref{prop:characbisecandfullgrp}). 

In Section \ref{sec:MainResults} we prove our two main results, Theorem \ref{thm:MainIsoUGs} and Theorem \ref{main2}. To motivate our results, we give an example of an ultragraph satisfying the conditions of Theorem \ref{thm:MainIsoUGs}, and whose associated $C^*$-algebra cannot be realized as a graph $C^*$-algebra.



\section{Preliminaries}\label{sec1}

In this section we recall key definitions and set up notation regarding ultragraphs, groupoids and topological full groups. We start with ultragraphs. 

\subsection{Ultragraphs and the edge shift space} \label{sec:UltraGraphs}

Ultragraphs first appeared in  \cite{MR3600124}, as defined below. 

\begin{definicao}\label{def of ultragraph}
An \emph{ultragraph} is a quadruple $\mathcal{G}=(G^0, \mathcal{G}^1, r,s)$ consisting of two countable sets $G^0, \mathcal{G}^1$, a map $s:\mathcal{G}^1 \to G^0$, and a map $r:\mathcal{G}^1 \to P(G^0)\setminus \{\emptyset\}$, where $P(G^0)$ is the power set of $G^0$.
\end{definicao}

A key object when studying ultragraphs are generalized vertices, which we define below. 

\begin{definicao}\label{def of mathcal{G}^0}
Let $\mathcal{G}$ be an ultragraph. Define $\mathcal{G}^0$ to be the smallest subset of $P(G^0)$ that contains $\{v\}$ for all $v\in G^0$, contains $r(e)$ for all $e\in \mathcal{G}^1$, and is closed under finite unions and nonempty finite intersections. Elements of $\G^0$ are called \emph{generalized vertices}.
\end{definicao}




Next we set up notation that will be used throughout the paper. This agrees with notation introduced in \cite{MR2457327} and \cite{MR3600124}. 

Let $\mathcal{G}$ be an ultragraph. A \textit{finite path} in $\mathcal{G}$ is either an element of $\mathcal{G}%
^{0}$ or a sequence of edges $e_{1}\ldots e_{k}$ in $\mathcal{G}^{1}$ where
$s\left(  e_{i+1}\right)  \in r\left(  e_{i}\right)  $ for $1\leq i\leq k$. If
we write $\alpha=e_{1}\ldots e_{k}$, the length $\left|  \alpha\right|  $ of
$\alpha$ is  $k$. The length $|A|$ of a path $A\in\mathcal{G}^{0}$ is
zero. We define $r\left(  \alpha\right)  =r\left(  e_{k}\right)  $ and
$s\left(  \alpha\right)  =s\left(  e_{1}\right)  $. For $A\in\mathcal{G}^{0}$,
we set $r\left(  A\right)  =A=s\left(  A\right)  $. The set of
finite paths in $\mathcal{G}$ is denoted by $\mathcal{G}^{\ast}$. An \textit{infinite path} in $\mathcal{G}$ is an infinite sequence of edges $\gamma=e_{1}e_{2}\ldots$ in $\prod \mathcal{G}^{1}$, where
$s\left(  e_{i+1}\right)  \in r\left(  e_{i}\right)  $ for all $i$. The set of
infinite paths  in $\mathcal{G}$ is denoted by $\mathfrak
{p}^{\infty}$. The length $\left|  \gamma\right|  $ of $\gamma\in\mathfrak
{p}^{\infty}$ is defined to be $\infty$. A vertex $v$ in $G^0$ is
called a \emph{sink} if $\left|  s^{-1}\left(  v\right)  \right|  =0$, it is
called an \emph{infinite emitter} if $\left|  s^{-1}\left(  v\right)  \right|
=\infty$, and it is called a \emph{source} if $v\notin r(e)$ for all $e\in\GG^1$. For $v,w\in G^0$, we define $v\GG^1=\{e\in\GG^1\mid s(e)=v\}$, $\GG^1w=\{e\in\GG^1\mid w\in r(e)\}$ and $v\GG^1w=v\GG^1\cap \GG^1w$. 

For $n\geq1,$ we define
$\mathfrak{p}^{n}:=\{\left(  \alpha,A\right)  :\alpha\in\mathcal{G}^{\ast
},\left\vert \alpha\right\vert =n,$ $A\in\mathcal{G}^{0},A\subseteq r\left(
\alpha\right)  \}$. We specify that $\left(  \alpha,A\right)  =(\beta,B)$ if
and only if $\alpha=\beta$ and $A=B$. We set $\mathfrak{p}^{0}:=\mathcal{G}%
^{0}$ and we let $\mathfrak{p}:=\coprod\limits_{n\geq0}\mathfrak{p}^{n}$. We embed the set of finite paths $\GG^*$ in $\mathfrak{p}$ by sending $\alpha$ to $(\alpha, r(\alpha))$. We
define the length $\left\vert \left(
\alpha,A\right)  \right\vert $ of a pair $\left(  \alpha,A\right)  $  to be $\left\vert
\alpha\right\vert $. We call $\mathfrak{p}$ the \emph{ultrapath space}
associated with $\mathcal{G}$ and the elements of $\mathfrak{p}$ are called
\emph{ultrapaths}. Each $A\in\mathcal{G}^{0}$ is regarded as an ultrapath of length zero and can be identified with the pair $(A,A)$. We  extend the range map $r$ and the source map $s$ to
$\mathfrak{p}$ by declaring that $r\left(  \left(  \alpha,A\right)  \right)
=A$, $s\left(  \left(  \alpha,A\right)  \right)  =s\left(  \alpha\right)
$ and $r\left(  A\right)  =s\left(  A\right)  =A$.

We concatenate elements in $\mathfrak{p}$ in the following way: If $x=(\alpha,A)$ and $y=(\beta,B)$, with $|x|\geq 1, |y|\geq 1$, then $x\cdot y$ is defined if and only if
$s(\beta)\in A$, in which case, $$x\cdot y:=(\alpha\beta,B).$$ 
Also we
specify that:
\begin{equation}
x\cdot y=\left\{
\begin{array}
[c]{ll}%
x\cap y & \text{if }x,y\in\mathcal{G}^{0}\text{ and if }x\cap y\neq\emptyset\\
y & \text{if }x\in\mathcal{G}^{0}\text{, }\left|  y\right|  \geq1\text{, and
if }s\left(  y\right)\in x   \\
x_{y} & \text{if }y\in\mathcal{G}^{0}\text{, }\left|  x\right|  \geq1\text{,
and if }r\left(  x\right)  \cap y\neq\emptyset
\end{array}
\right.  \label{specify}%
\end{equation}
where, if $x=\left(  \alpha,A\right)  $, $\left|  \alpha\right|  \geq1$ and if
$y\in\mathcal{G}^{0}$, the expression $x_{y}$ is defined to be $\left(
\alpha,A\cap y\right)  $. Given $x,y\in\mathfrak{p}$, we say that $x$ has $y$ as an initial segment if
$x=y\cdot x^{\prime}$, for some $x^{\prime}\in\mathfrak{p}$, with $s\left(
x^{\prime}\right)  \cap r\left(  y\right)  \neq\emptyset$. 

We extend the source map $s$ to $\mathfrak
{p}^{\infty}$, by defining $s(\gamma)=s\left(  e_{1}\right)  $, where
$\gamma=e_{1}e_{2}\ldots$. We may concatenate pairs in $\mathfrak{p}$, with
infinite paths in $\mathfrak{p}^{\infty}$ as follows. If $y=\left(
\alpha,A\right)  \in\mathfrak{p}$, and if $\gamma=e_{1}e_{2}\ldots\in
\mathfrak{p}^{\infty}$ are such that $s\left(  \gamma\right)  \in r\left(
y\right)  =A$, then the expression $y\cdot\gamma$ is defined to be
$\alpha\gamma=\alpha e_{1}e_{2}...\in\mathfrak{p}^{\infty}$. If $y=$
$A\in\mathcal{G}^{0}$, we define $y\cdot\gamma=A\cdot\gamma=\gamma$ whenever
$s\left(  \gamma\right)  \in A$. Of course $y\cdot\gamma$ is not defined if
$s\left(  \gamma\right)  \notin r\left(  y\right)  =A$. 

\begin{remark} To simplify notation we omit the dot in the definition of concatenation, so that $x\cdot y$ will be denoted by $xy$.
\end{remark}

Given $\alpha,\beta\in \mathcal{G}^*$ we say $\alpha$ is an \textit{initial segment} of $\beta$, written as $\alpha<\beta$, if there is a $\gamma\in\mathcal{G}^*$ with $|\gamma|>0$ such that $\be=\al\gamma$. Similarly, for $\alpha\in \mathcal{G}^*$ and $\beta\in\mathfrak{p}^{\infty}$, in which case $\gamma\in \mathfrak{p}^{\infty}$. The paths $\al$ and $\be$ are \textit{disjoint} if they are different and neither one is an initial segment of the other. 

\begin{definicao} \label{dfn:ultraDisjoint}
    Two ultrapaths $(\alpha,A)$ and $(\beta,B)$ are \textit{disjoint} if one of the following conditions is satisfied:
    \begin{itemize}
        \item $\alpha$ and $\beta$ are disjoint paths;
        \item $\alpha=\beta$ and $A$ and $B$ are disjoint sets;
        \item $\alpha$ is a initial segment of $\beta$, say $\beta=\alpha\gamma$, and $s(\gamma)\notin A$;
        \item $\beta$ is a initial segment of $\alpha$, say $\alpha=\beta\gamma$, and $s(\gamma)\notin B$.
    \end{itemize}
\end{definicao}

\begin{definicao}
\label{infinte emitter} For each subset $A$ of $G^{0}$, let
$\varepsilon\left(  A\right)  $ be the set $\{ e\in\mathcal{G}^{1}:s\left(
e\right)  \in A\}$. We say that a set $A$ in $\mathcal{G}^{0}$ is an
\emph{infinite emitter} whenever $\varepsilon\left(  A\right)  $ is infinite.
\end{definicao}



The key concept in the definition of the shift space $X$ associated to an ultragraph without sinks $\GG$ is that of minimal infinite emitters. We recall this below.

\begin{definicao}\label{minimal} Let $\GG$ be an ultragraph and $A\in \GG^0$. We say that $A$ is a minimal infinite emitter if it is an infinite emitter that contains no proper subsets (in $\GG^0$) that are infinite emitters. 
For a finite path $\alpha$ in $\GG$, we say that $A$ is a minimal infinite emitter in $r(\alpha)$ if $A$ is a minimal infinite emitter and $A\subseteq r(\alpha)$. We denote the set of all minimal infinite emitters in $r(\alpha)$ by $M_\alpha$.
\end{definicao}





Associated to an ultragraph with no sinks, we have the topological space $X= \mathfrak{p}^{\infty} \cup X_{fin}$, where 
$$X_{fin} = \{(\alpha,A)\in \mathfrak{p}: |\alpha|\geq 1 \text{ and } A\in M_\alpha \}\cup
 \{(A,A)\in \GG^0: A \text{ is a minimal infinite emitter}\}, $$ and the topology has a basis given by the collection $$\{D_{(\beta,B)}: (\beta,B) \in \mathfrak{p}, |\beta|\geq 1\ \} \cup \{D_{(\beta, B),F}:(\beta, B) \in X_{fin}, F\subset \varepsilon\left( B \right), |F|<\infty \},$$ where for each $(\beta,B)\in \mathfrak{p}$ we have that $$D_{(\beta,B)}= \{(\beta, A): A\subset B \text{ and } A\in M_\beta \}\cup\{y \in X: y = \beta \gamma', s(\gamma')\in B\},$$ and, for $(\beta,B)\in X_{fin}$ and $F$ a finite subset of $\varepsilon\left( B \right)$,  $$D_{(\beta, B),F}=  \{(\beta, B)\}\cup\{y \in X: y = \beta \gamma', \gamma_1' \in \ \varepsilon\left( B \right)\setminus F\}.$$
 
For $(\beta,B)\in \mathfrak{p}$ and $F\subseteq \varepsilon\left( B \right)$ finite, define $$D_{(\beta,B),F}= \{(\beta, A): A\subset B \text{ and } A\in M_\beta \}\cup\{y \in X: y = \beta \gamma',\gamma_1' \in \ \varepsilon\left( B \right)\setminus F\},$$

Notice that, for $(\beta,B)\in \mathfrak{p}$, $D_{(\beta,B),F}=D_{(\beta,B)}\setminus \left(\bigcup_{\gamma_1\in F}D(\beta\gamma_1,r(\beta\gamma_1)\right)$, which is closed because it is a difference of a closed set \cite{MR3938320} and a union of open sets. Also $D_{(\beta,B),F}=\bigcup_{A\in M_{\beta}}D_{(\beta,A),F\cap\varepsilon(A)}\cup\bigcup_{\gamma_1\in\varepsilon(B)\setminus F}D(\beta\gamma_1,r(\beta\gamma_1))$, which is an open set.
 

\begin{remark}
Note that if two ultrapaths $(\alpha,A)$ and $(\beta,B)$ are disjoint then the sets $D_{(\alpha,A)}$ and $D_{(\beta,B)}$ are disjoint.
\end{remark}

We recall below Condition~(RFUM), which guarantees that the (R)ange of each edge is a (F)inite (U)nion of (M)inimal infinite emitters and single vertices.


{\bf Condition (RFUM):} For each edge $e\in \GG^1$ its range can be written as $$r(e) = \displaystyle \bigcup_{n=1}^k A_n,$$ where $A_n$ is either a minimal infinite emitter or a single vertex. 

It was shown in \cite{MR3938320} that under Condition~(RFUM), the shift space $X$ has a basis of open, compact sets. If condition (RFUM) is not satisfied, then certain basic open sets may not be compact (c.f. \cite[Remark 3.10]{MR3938320}). Having a basis of compact open sets such that each compact open set can be expressed as a disjoint union of these basic sets (Lemma \ref{lem:disjointunionofcylindersets}) is a crucial feature in our work, and we therefore make the following assumptions for the remainder of this paper: {\bf all ultragraphs  are assumed to have no sinks and satisfy Condition~(RFUM)}.

\begin{remark}\label{cylindersets}
    The basis considered in \cite{MR3938320} does not included the sets $D_{(\beta,B),F}$ when $(\beta,B)\notin X_{fin}$ and $F\neq\emptyset$. However as seen above they are open and closed (and thus compact under condition (RFUM)). We will include these sets in the basis in order to simplify some of the proofs in this paper.
\end{remark}

We associate to the space $X$ a shift map:
\begin{definicao}\label{shift-map-def}
The \emph{shift map} is the function $\sigma : X\setminus \G^{(0)} \rightarrow X$ defined by $$\sigma(x) =  \begin{cases} \gamma_2 \gamma_3 \ldots & \text{ if $x = \gamma_1 \gamma_2 \ldots \in \mathfrak{p}^\infty$} \\ (\gamma_2 \ldots \gamma_n,A) & \text{ if $x = (\gamma_1 \ldots \gamma_n,A) \in X_{fin}$ and $|x|> 1$} \\(A,A) & \text{ if $x = (\gamma_1,A) \in X_{fin}.$} 
\end{cases}$$
We call $X$ together with the shift map the \emph{edge shift space}.
\end{definicao}

\begin{remark}
Notice that we do not define the shift map for elements of $X$ of length zero, differently to what is done in \cite{MR3600124, MR3938320, MR3918205,gonccalves2018li, gonccalves2019ultragraph}, 
since the shift map may fail to be continuous on paths of length zero. This ensures that it has all the `nice' continuity properties that we require on paths of length greater than zero to build an \' etale groupoid from the shift (\cite[Proposition 3.16]{MR3600124}). 
\end{remark}

\subsection{Groupoids and their topological full groups}\label{subsec:groupoid.full.group}

In this section we gather necessary definitions and background on the topological full group of an ample groupoid, closely following \cite{MR3950815}.

 A \emph{groupoid} $G$ is a small category of isomorphisms. A \emph{topological groupoid} is a groupoid equipped with a topology making the operations of multiplication and taking inverse continuous. The elements of the form $gg^{-1}$ are called \emph{units}. We denote the set of units of  $G$ by $G^{(0)}$, and refer to $G^{(0)}$ as the \emph{unit space}. We think of the unit space as a topological space equipped with the relative topology from $G$. The \emph{source} and \emph{range} maps are given by $s(g)=g^{-1}g$ and $r(g)= gg^{-1}$, for $g\in G$. These maps are necessarily continuous when $G$ is a topological groupoid.

An \emph{\'etale groupoid} is a topological groupoid  $  G$ such that its unit space  $G\skel 0$ is locally compact and Hausdorff and its range map is a local homeomorphism (this implies that the source map and the multiplication map are also local homeomorphisms). A bisection of $G$ is a subset $B\subseteq  G$ such that the restriction of the range and source maps to $B$ are injective. A bisection $U$ is called full if we have  $s(U) = r(U) = G^{(0)}$.  An \'etale groupoid is \emph{ample} if its unit space has a basis of compact open sets or, equivalently, if the arrow space $ G$ has a basis of compact open bisections.

The \emph{isotropy group} of a unit $x\in G^{(0)}$ is the group $G_x^x = \{g\in \GG \mid s(g)=r(g)=x\}$, and the \emph{isotropy bundle} is
\[G'= \{g\in G \mid s(g)=r(g)\} = \bigsqcup_{x \in \G^{(0)}} G_x^x.\]
We say that $G$ is \emph{effective} if the interior of $G'$ equals $G^{(0)}$. We call $G$ \emph{topologically principal} if the set of points in $G^{(0)}$ with trivial isotropy group are dense in $G^{(0)}$.

If $x\in G^{(0)}$, then the \textit{orbit} of $x$ is defined by $$\mathrm{Orb}_{G}(x)=\{y\in G^{(0)}: \text{ there exists } g\in G \text{ with } s(g)=x, r(g)=y\}.$$ 
A subset $A \subset G^{(0)}$ is \textit{wandering} if $|A \cap \mathrm{Orb}_{G}(x)| = 1$ for all $x \in A$. We say that $G$ is\textit{ non-wandering} if $G^{(0)}$ has no non-empty clopen wandering subsets.

To each bisection $U\subseteq \GG$ in an \'etale groupoid we associate a homeomorphism \[\pi_U \colon s(U)\to r(U)\] given by $r_{\vert U} \circ (s_{\vert U})^{-1}$. Whenever $U$ is a full bisection, $\pi_U$ is a homeomorphism of $G^{(0)}$. 

For a topological space $X$ we denote the group of self-homeomorphisms of $X$ by $\homeo(X)$. By an \emph{involution} we mean a homeomorphism (or more generally, a group element) $\phi$ with $\phi^2 = \id_X$.  For a homeomorphism~$\phi \in \homeo(X)$, we define the \emph{support of $\phi$} to be the (regular) closed set~$\overline{\{x\in X \ \vert \ \phi(x)\neq x\}}$, and denote it by $\supp(\phi)$. We also define $\homeo_c(X)=\{\phi\in\homeo(X)\mid\supp(\phi)\text{ is compact open}\}$.

\begin{definicao}\label{def:tfg}
Let $G$ be an effective ample groupoid. The \emph{topological full group} of $G$, denoted $\llbracket G \rrbracket$, is the subgroup of $\homeo\left(G^{(0)}\right)$ consisting of all homeomorphisms of the form $\pi_U$, where $U$ is a full bisection in $G$ such that $\supp(\pi_U)$ is compact. We will denote by~$\DD(\llbracket G \rrbracket)$ its commutator subgroup.
\end{definicao}

In the topological full group, composition and inversion of the homeomorphisms correspond to multiplication and inversion of the bisections, that is, 
\begin{itemize}
\item $\pi_{\mathcal{G}^{(0)}} = \id_{\mathcal{G}^{(0)}} = 1$
\item $\pi_U \circ \pi_V = \pi_{UV}$
\item $\left(\pi_U\right)^{-1} = \pi_{U^{-1}}$
\end{itemize}

\begin{lema}\cite[Lemma 3.7]{MR3950815} \label{bisectiondecomposition}
Let $G$ be an effective ample groupoid, and let $\pi_U\in \llbracket G \rrbracket$. Then we have a decomposition 
$$ U=U^\bot \sqcup \left(G^{(0)}\backslash \supp(\pi_U)  \right),$$
where $U^\bot$ is a compact open bisection with $s(U^\bot)=r(U^\bot)=\supp(\pi_U)$. Conversely, any compact bisection $V\subseteq G$ with $s(V)=r(V)$ defines an element $\pi_{\tilde{V}} \in\llbracket  G\rrbracket$ with $\supp(\pi_{\tilde{V}})\subseteq s(V)$ by setting $\tilde{V}=V\sqcup (G^{(0)}\backslash s(V))$.
\end{lema}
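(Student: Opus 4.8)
The plan is to prove the two directions separately, using effectiveness to control the behaviour of $U$ over the fixed-point set of $\pi_U$, and the \'etale/Hausdorff structure to obtain openness. Throughout I use that, since $U$ is an open bisection, $s|_U$ is a homeomorphism of $U$ onto $G^{(0)}$, and that in an \'etale groupoid $G^{(0)}$ is open in $G$.

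For the forward direction, write $F=\{x\in G^{(0)}:\pi_U(x)\neq x\}$, so that $\supp(\pi_U)=\overline{F}$ and $N:=G^{(0)}\setminus\supp(\pi_U)$ is the open interior of the fixed-point set of $\pi_U$. I would set $U^\bot:=(s|_U)^{-1}(\supp(\pi_U))$. The first key step is to identify the part of $U$ lying over $N$: the set $(s|_U)^{-1}(N)$ is open in $G$ (as $U$ is open and $s|_U$ is a homeomorphism), and every element $g$ in it satisfies $r(g)=\pi_U(s(g))=s(g)$, so it is contained in the isotropy bundle $G'$. Effectiveness, $\operatorname{int}(G')=G^{(0)}$, then forces $(s|_U)^{-1}(N)\subseteq G^{(0)}$, i.e.\ it consists of units and equals $N$ itself. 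This yields the claimed disjoint decomposition $U=U^\bot\sqcup N=U^\bot\sqcup(G^{(0)}\setminus\supp(\pi_U))$.

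It remains to check the properties of $U^\bot$. It is a bisection as a subset of the bisection $U$. Since $s(U)=G^{(0)}$, we get $s(U^\bot)=\supp(\pi_U)$ immediately; for the range, note that $F$ is $\pi_U$-invariant (if $\pi_U(x)\neq x$, applying the injective $\pi_U$ gives $\pi_U(\pi_U(x))\neq\pi_U(x)$, so $\pi_U(x)\in F$, and symmetrically for $\pi_U^{-1}$), whence $\pi_U(\supp(\pi_U))=\overline{\pi_U(F)}=\overline{F}=\supp(\pi_U)$ and $r(U^\bot)=\pi_U(s(U^\bot))=\supp(\pi_U)$. Compactness of $U^\bot$ follows since $s|_U$ carries it homeomorphically onto the compact set $\supp(\pi_U)$. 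The main obstacle is openness of $U^\bot$, which is equivalent to $\supp(\pi_U)$ being clopen, i.e.\ to $N$ being closed. Here I would use that $G^{(0)}$ is open in $G$ together with the Hausdorff property of the groupoid: if a net $x_i\in N$ converges to $x\in G^{(0)}$, then by the previous step each $x_i$ is the unit $(s|_U)^{-1}(x_i)$, so continuity of $(s|_U)^{-1}$ gives $x_i\to(s|_U)^{-1}(x)=:g_x$ in $U$, while also $x_i\to x$ in $G^{(0)}\subseteq G$; uniqueness of limits forces $g_x=x$, so $g_x$ is a unit and $x\in N$. Thus $N$ is closed, $\supp(\pi_U)$ is clopen, and $U^\bot$ is a compact open bisection.

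For the converse, given a compact open bisection $V$ with $s(V)=r(V)$, set $\tilde V=V\sqcup(G^{(0)}\setminus s(V))$. Since $s(V)$ is compact, hence closed in the Hausdorff space $G^{(0)}$, the set $G^{(0)}\setminus s(V)$ is open, so $\tilde V$ is open; a short computation using $r(V)=s(V)$ shows $s(\tilde V)=r(\tilde V)=G^{(0)}$ and that $s,r$ are injective on $\tilde V$, so $\tilde V$ is a full bisection. As $\tilde V$ acts as the identity off $s(V)$, the moving set of $\pi_{\tilde V}$ lies in the closed set $s(V)$, whence $\supp(\pi_{\tilde V})\subseteq s(V)$ is a closed subset of a compact set and therefore compact. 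Hence $\pi_{\tilde V}\in\llbracket G\rrbracket$, completing the proof. The crux is the openness argument in the third paragraph, where the \'etale structure and the Hausdorff property are genuinely needed.
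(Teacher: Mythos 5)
The paper itself offers no proof of this statement: Lemma \ref{bisectiondecomposition} is quoted verbatim from \cite[Lemma 3.7]{MR3950815}, so your argument can only be judged against the original source and on its own merits. Your overall strategy is the standard and correct one: effectiveness forces the part of $U$ sitting over $N=G^{(0)}\setminus\supp(\pi_U)$ to consist of units, Hausdorffness is what makes $\supp(\pi_U)$ clopen, and the converse direction is routine. Your decision to read ``bisection'' as \emph{open} bisection is also the right one: with this paper's literal definition (injectivity of $s$ and $r$ only, no openness), the forward statement is actually false --- for example, in an effective transformation groupoid $\mathbb{Z}/2\ltimes X$ whose generator has a single non-isolated fixed point $x_0$, one may replace the unit $x_0$ in $U=G^{(0)}$ by the nontrivial isotropy element over $x_0$ without changing $\pi_U=\id$ --- whereas in \cite{MR3950815} bisections are open by definition.

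There is, however, one genuine (though easily repaired) gap, located exactly at the step you call the crux. In the closedness-of-$N$ argument you pass from ``$g_x=x$'' to ``$x\in N$''. What you have actually shown at that point is that $x\in U\cap G^{(0)}$, hence that $x$ is a \emph{fixed point} of $\pi_U$ (indeed $\pi_U(x)=r(x)=x$). But $N$ is the \emph{interior} of the fixed-point set, and nothing you have said rules out that $x$ is a boundary fixed point, i.e.\ $x\in\supp(\pi_U)\setminus F$ where $F=\{y\in G^{(0)}:\pi_U(y)\neq y\}$; distinguishing $\operatorname{Fix}(\pi_U)$ from its interior is precisely the difficulty in this half of the lemma, so the jump cannot be left implicit. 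The missing line is: $U\cap G^{(0)}$ is open in $G$ (this is where openness of $G^{(0)}$ --- which you announce at the outset but never actually invoke --- is needed) and consists of fixed points of $\pi_U$, so $U\cap G^{(0)}\subseteq\operatorname{int}\left(\operatorname{Fix}(\pi_U)\right)=N$; since $x\in U\cap G^{(0)}$, you conclude $x\in N$. Once this inclusion is available, you could in fact bypass the net argument entirely: together with your effectiveness step it gives $U\cap G^{(0)}=N$, hence $U^\bot=U\setminus G^{(0)}$, and since $G^{(0)}$ is closed in $G$ (it is the equalizer of $\id_G$ and $s$, and $G$ is Hausdorff), $U^\bot$ is open in $U$ and therefore in $G$. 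That is the cleaner route to the clopenness of $\supp(\pi_U)$; your net argument is a correct hands-on substitute for it once the missing inclusion is supplied. The converse half of your proof is complete and correct.
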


\begin{lema}\cite[Lemma 3.8]{MR3950815} \label{NonEmptyIntersecDfnsInvolution}
Let $G$ be an effective ample groupoid. Any compact bisection $V\subseteq G$
which satisfies $s(V )\cap r(V ) = \emptyset$ defines an involution $\pi_{\hat{V}}\in 
\llbracket G \rrbracket$ by setting $\hat{V}$ equal to $ V \sqcup V^{-1} \sqcup (G^{(0)}\backslash (s(V ) \cup r(V )))$. Moreover, $\supp(\pi_{\hat{V}} ) \subseteq s(V) \cup r(V)$.
\end{lema}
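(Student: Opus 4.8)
The plan is to verify directly that $\hat{V}$ is an open full bisection whose associated homeomorphism has compact support and squares to the identity. Throughout I would write $W = G^{(0)}\setminus(s(V)\cup r(V))$, regarded as a set of units, so that $\hat{V}=V\sqcup V^{-1}\sqcup W$. Since $V$ is a compact bisection in an ample groupoid, both $s(V)$ and $r(V)$ are compact open subsets of the Hausdorff unit space $G^{(0)}$, and $V^{-1}$ is a compact open bisection with $s(V^{-1})=r(V)$ and $r(V^{-1})=s(V)$; consequently $s(V)\cup r(V)$ is closed, $W$ is open in $G^{(0)}$ (which is open in $G$), and $\hat{V}$ is a union of three open sets, hence open. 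The first thing I would check is that the union defining $\hat{V}$ is genuinely disjoint: if $g\in V\cap V^{-1}$ then $s(g)\in s(V)\cap r(V)=\emptyset$, a contradiction, and the elements of $V$ and of $V^{-1}$ are non-units (their sources and ranges lie in the disjoint sets $s(V)$ and $r(V)$), so neither piece meets $W$. This is the first place where the hypothesis $s(V)\cap r(V)=\emptyset$ is essential.

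Next I would show $\hat{V}$ is a full bisection. Using disjointness, $s(\hat{V})=s(V)\sqcup s(V^{-1})\sqcup W=s(V)\sqcup r(V)\sqcup W=G^{(0)}$, and symmetrically $r(\hat{V})=G^{(0)}$. For injectivity of $s|_{\hat{V}}$, note that $s$ is injective on each of the three pieces and that their images $s(V)$, $r(V)$, $W$ are pairwise disjoint; the same argument applies to $r|_{\hat{V}}$. Hence $\hat{V}$ is an open full bisection and $\pi_{\hat{V}}\in\homeo(G^{(0)})$.

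For the support, observe that $\pi_{\hat{V}}$ fixes every point of $W$ (there it is implemented by units), so $\{x:\pi_{\hat{V}}(x)\neq x\}\subseteq s(V)\cup r(V)$; as $s(V)\cup r(V)$ is compact and therefore closed, $\supp(\pi_{\hat{V}})\subseteq s(V)\cup r(V)$ is compact, giving $\pi_{\hat{V}}\in\llbracket G\rrbracket$. (In fact one obtains equality, since a point of $s(V)$ is sent into $r(V)$ and vice versa and these sets are disjoint, so no point of $s(V)\cup r(V)$ is fixed.) It remains to prove that $\pi_{\hat{V}}$ is an involution, and here I would invoke the relation $\pi_U\circ\pi_V=\pi_{UV}$ and compute the product bisection $\hat{V}\cdot\hat{V}$. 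The terms $V\cdot V$ and $V^{-1}\cdot V^{-1}$ are empty because $s(V)\cap r(V)=\emptyset$, and every product pairing $W$ with $V$ or $V^{-1}$ is empty for the same reason; what survives is $V\cdot V^{-1}=r(V)$, $V^{-1}\cdot V=s(V)$ and $W\cdot W=W$, so that $\hat{V}\cdot\hat{V}=r(V)\sqcup s(V)\sqcup W=G^{(0)}$. Therefore $\pi_{\hat{V}}^2=\pi_{\hat{V}\hat{V}}=\pi_{G^{(0)}}=\id$, as desired.

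The routine part is the case-checking of sources and ranges; the main obstacle is the careful bookkeeping in the product computation $\hat{V}\cdot\hat{V}$, where one must confirm that exactly the right products are defined and correctly identify the idempotent (unit) products $V\cdot V^{-1}$ and $V^{-1}\cdot V$ using injectivity of $s|_V$ and $r|_V$. Every cancellation there rests on the single hypothesis $s(V)\cap r(V)=\emptyset$, which is precisely what makes $\hat{V}$ simultaneously a well-defined bisection and an involution; effectiveness of $G$ plays no active role beyond guaranteeing that $\llbracket G\rrbracket$ is defined in the first place.
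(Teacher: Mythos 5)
Your proof is correct: the paper itself gives no argument for this lemma (it is quoted from \cite[Lemma 3.8]{MR3950815}), and your direct verification --- disjointness of the three pieces of $\hat{V}$, fullness, the support estimate, and the product computation $\hat{V}\cdot\hat{V}=G^{(0)}$ resting on injectivity of $s|_V$ and $r|_V$ --- is exactly the standard argument from that reference. The only caveat is that ``compact bisection'' must be read as ``compact \emph{open} bisection'' (as in the cited source, where bisections are open by definition, unlike the definition stated in this paper), which you use implicitly when asserting that $s(V)$ and $r(V)$ are open and hence that $\hat{V}$ is an open bisection and $\pi_{\hat{V}}$ a homeomorphism.
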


\begin{definicao}\cite[Definitions 5.1 and 6.1]{MR3950815}\label{dfn:SpaceGroupPairsFaithful}
A \textit{space-group pair} consists of a pair $(\Gamma,X)$, where X is a Hausdorff space with a basis of compact open sets, and $\Gamma$ is a subgroup of  $\mathrm{Homeo}_c(X)$.  A class K of space-group pairs is called \textrm{faithful} if for every $(\Gamma_1,X_1), (\Gamma_2,X_2)\in K$ and every every group isomorphism $\Phi : \Gamma_1 \to \Gamma_2$ there is a homeomorphism $\phi : X_1 \to X_2$ such that $\Phi(\gamma) = \phi \circ\gamma\circ\phi^{-1}$  for every $\gamma\in\Gamma_1$.
\end{definicao}

\section{Ultragraph groupoids and their topological full groups}\label{sec:groupoid}

In this section we define an ample groupoid associated with an ultragraph with no sinks and satisfying condition (RFUM). We use this groupoid to extend some known results about graphs to the general setting of ultragraphs. In Proposition \ref{prop:characbisecandfullgrp} we characterize  the open bisections and describe the elements of the topological full group of the groupoid associated with an ultragraph, analogous to that of graphs (\cite{MR3950815}). This section is based on and extends \cite[Section 9]{MR3950815} to groupoids of ultragraphs.

Throughout this section we fix an ultragraph $\GG$ which satisfies condition (RFUM) and has no sinks. Let $(X,\sigma)$  denote the edge shift space associated to $\GG$ (see Definition~\ref{shift-map-def}).

We begin by describing the groupoid associated to an ultragraph and its topology. For any non-zero $m\in\N$ we let  $X^{\geq m}=\{y\in X\mid |y|\geq m\}$. Define 
$$\Gs:= \{(x,m-n,y)\in X\times \Z\times X\mid x\in X^{\geq m}, y\in X^{\geq n}, \sigma^m(x)=\sigma^n(y) \}. $$
The set of composable pairs is given by  \[\Gs^2=\{(x,m,y),(x^{\prime},n,y^{\prime})\in\Gs\mid y=x^{\prime}\}.\]
Then $\Gs$ is a groupoid with composition and involution given by 
\begin{equation*}
\begin{split}
    &(x,m,y)(y,n,y):=(x,mn,z), \text{ and } \\
    & (x,m,y)^{-1}:=(y,-m,x),
\end{split}
\end{equation*}
respectively. The unit space $\Gs^{(0)}$ of $\Gs$ is identified with $X$. To get a topology on $\Gs$ we define
$$ Z(U,m,n,V):=\{(x,m-n,y)\in \Gs \mid x\in U,y\in V, \sigma^m(x)=\sigma^n(y)\}, $$
where $U\subseteq X^{\geq m} $ and $V\subseteq X^{\geq n}$ are open sets such that $\sigma^m|_U$ and $\sigma^n|_V$ are injective and $\sigma^m(U)=\sigma^n(V)$. The sets $Z(U,m,n,V)$, ranging over $U$ and $V$ that satisfy these conditions, form a basis for a locally compact Hausdorff topology on $\Gs$. This topology is a direct analogue of the topology for the boundary path groupoid of directed graphs, which is well-known to be an \'etale groupoid (see for example \cite{MR1770333}). 

\begin{remark} Notice that the C*-algebra associated to the groupoid defined above coincides with the usual ultragraph C*-algebra, see \cite{phdthesisFelipe, tascagoncalves} for details. 
\end{remark}

\begin{remark}
We point out that the description of the usual ultragraph C*-algebra using groupoids was done first in \cite{MR2457327} by Marrero and Muhly. In order to define the unit space of their groupoid, we need to work with filters and ultrafilters. Condition (RFUM) allows us to define the unit space in simpler terms using only the notion of minimal infinite emitters as in Section \ref{sec1}.

Also, in order to work with the full group, we also need a suitable basis of open bisections of the groupoid. The basis considered just above \cite[Lemma 21]{MR2457327} is weaker than the one needed to obtain the C*-algebra of an ultragraph. Comparing their basis with ours in Lemma \ref{lem:thirdbasisfortop}, they only consider the bisections where $F_A$ is the empty set. However if we only considering $F_A$ to be the empty set for a usual graph with an infinite emitter results in a non-Hausdorff topology.

With minor adjustments, the main results of \cite{MR2457327} still hold. See for example \cite{Gil3}, where the class of C*-algebras of labelled spaces, which include the class of C*-algebras of ultragraphs, is described in terms of groupoids using an approach similar to \cite{MR2457327}.
\end{remark}

We aim to characterize the topology on $\Gs$ in terms of a different basis, which will be helpful to show that $\Gs$ is an ample groupoid. For this we use the cylinder sets defined in Section \ref{sec:UltraGraphs} and the following lemmas. 

\begin{lema}\label{lem:sameunitsameMIE}
Let $\alpha,\beta\in \GG^*$. Assume that $A\in\GG^0$,  $(\alpha,A), (\beta,A)\in\mathfrak{p}$ and $C\subseteq A$. Then $C\in M_\alpha$ if and only if $C\in M_\beta$.
\end{lema}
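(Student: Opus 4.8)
The plan is to observe that the property of being a minimal infinite emitter, as in Definition~\ref{minimal}, is intrinsic to the set $C$ and makes no reference to any path decorating it. Consequently, membership of $C$ in $M_\alpha$ decouples into two conditions: that $C$ is a minimal infinite emitter (an absolute condition on $C$, namely that $\varepsilon(C)$ is infinite and $C$ contains no proper subset in $\GG^0$ that is an infinite emitter), and that $C\subseteq r(\alpha)$ (the only clause that mentions $\alpha$). The analogous decoupling holds for $M_\beta$ with $r(\beta)$ in place of $r(\alpha)$. Thus the desired equivalence $C\in M_\alpha \iff C\in M_\beta$ will follow once I show that the two range-containments $C\subseteq r(\alpha)$ and $C\subseteq r(\beta)$ both hold automatically under the stated hypotheses.

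First I would unwind the hypothesis $(\alpha,A)\in\mathfrak{p}$. By the definition of $\mathfrak{p}^{n}$, every ultrapath $(\alpha,A)$ with $|\alpha|\geq 1$ satisfies $A\subseteq r(\alpha)$; in the degenerate case $|\alpha|=0$ the ultrapath $(\alpha,A)$ forces $\alpha=A$ and $r(\alpha)=A$, so $A\subseteq r(\alpha)$ holds trivially there as well. Combining this with the hypothesis $C\subseteq A$ gives $C\subseteq A\subseteq r(\alpha)$, that is, $C\subseteq r(\alpha)$. The identical argument applied to $(\beta,A)\in\mathfrak{p}$ yields $C\subseteq r(\beta)$.

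With both containments in hand the proof closes immediately. Indeed, $C\in M_\alpha$ holds precisely when $C$ is a minimal infinite emitter and $C\subseteq r(\alpha)$; since the second clause is automatic, this is equivalent to $C$ being a minimal infinite emitter, which in turn (again using the now-automatic containment $C\subseteq r(\beta)$) is equivalent to $C\in M_\beta$. Note that if $C\notin\GG^0$ then neither $C\in M_\alpha$ nor $C\in M_\beta$ can hold, so the equivalence is trivially valid, and we may assume $C\in\GG^0$ throughout.

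I do not anticipate a genuine obstacle here; the lemma is a definitional unwinding whose content is simply that the generalized vertex $A$ common to the two ultrapaths already sits inside both $r(\alpha)$ and $r(\beta)$, so the common subset $C$ inherits both containments. The only points requiring a little care are the bookkeeping of the length-zero case for $\alpha$ or $\beta$, and the verification that the minimality in Definition~\ref{minimal} is an absolute property of $C$ rather than one taken relative to a fixed ambient range; both are routine.
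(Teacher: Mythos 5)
Your proposal is correct and rests on the same observation as the paper's proof: being a minimal infinite emitter is an absolute property of $C$, and the containments $C\subseteq A\subseteq r(\alpha)$ and $C\subseteq A\subseteq r(\beta)$ are automatic from the hypothesis that $(\alpha,A),(\beta,A)\in\mathfrak{p}$. The paper phrases this as a short contradiction argument (if $C\in M_\alpha\setminus M_\beta$, the failure must be of minimality, contradicting $C\in M_\alpha$), whereas you argue directly; your version is, if anything, more explicit about the containment step the paper leaves implicit.
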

\begin{proof}
If $C\in M_\alpha\backslash M_\beta$, then $C$ is an infinite emitter. Since $C \notin M_\beta$, there exists an infinite emitter $D$ in $\G^0$ such that $D\subsetneq C$. Hence C is not minimal (in $M_\alpha$), a contradiction. The same argument, with the roles of  $M_\alpha$ and $M_\beta$ reversed, proves the converse.
\end{proof}

\begin{lema}\label{lem:EqualShiftOnCylinders}
Assume that $(\alpha,A),(\beta,B)\in \mathfrak{p}$. Then $\sigma^{|\alpha|}(D_{(\alpha,A)})=\sigma^{|\beta|}(D_{(\beta,B)})$ if and only if $A=B$.
\end{lema}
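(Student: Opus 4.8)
The plan is to compute the set $\sigma^{|\alpha|}(D_{(\alpha,A)})$ explicitly and show that it depends only on the generalized vertex $A$ and not on the path $\alpha$. This makes the ``if'' direction immediate, and the ``only if'' direction then follows by exhibiting, whenever $A\neq B$, a single point of $X$ that lies in one image but not in the other.

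First I would unwind the shift map on the cylinder. Every element of $D_{(\alpha,A)}$ has length at least $|\alpha|$, so $\sigma^{|\alpha|}$ is defined on all of it, and its elements are of two kinds: the length-$|\alpha|$ ultrapaths $(\alpha,A')$ with $A'\subseteq A$ and $A'\in M_\alpha$, and the elements $y=\alpha\gamma'$ of length strictly greater than $|\alpha|$ with $s(\gamma')\in A$. Applying $\sigma^{|\alpha|}$ sends $(\alpha,A')$ to the length-zero element $(A',A')$ and sends $\alpha\gamma'$ to $\gamma'$, so that
$$\sigma^{|\alpha|}(D_{(\alpha,A)})=\{(A',A'): A'\subseteq A,\ A'\in M_\alpha\}\cup\{\gamma'\in X: |\gamma'|\geq 1,\ s(\gamma')\in A,\ \alpha\gamma'\in X\}.$$
The heart of the ``if'' direction is to see that each piece is independent of $\alpha$. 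For the continuation piece I would check that, since $A\subseteq r(\alpha)$, we have $\alpha\gamma'\in X$ for \emph{every} $\gamma'\in X$ with $s(\gamma')\in A$: a finite $\gamma'=(\delta,C)$ yields $\alpha\gamma'=(\alpha\delta,C)$ with $C\in M_{\alpha\delta}$ because $r(\alpha\delta)=r(\delta)$, while an infinite $\gamma'$ yields an infinite path. Thus this set equals $\{\gamma'\in X:|\gamma'|\geq1,\ s(\gamma')\in A\}$, which manifestly depends only on $A$. For the length-zero piece I would invoke Lemma \ref{lem:sameunitsameMIE}: since $(\alpha,A),(\beta,A)\in\mathfrak{p}$ and $A'\subseteq A$, one has $A'\in M_\alpha$ if and only if $A'\in M_\beta$, so these parts coincide as well. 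Together this shows $\sigma^{|\alpha|}(D_{(\alpha,A)})$ depends only on $A$, and in particular $A=B$ forces equality of the two images.

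For the converse I would argue by contraposition. If $A\neq B$ then, viewed as sets of vertices, they have nonempty symmetric difference, so fix $v\in A\setminus B$ (the other case is symmetric). Because $\GG$ has no sinks, every vertex emits an edge; extending repeatedly from $v$ (choosing at each stage an edge out of a vertex in the current range) produces an infinite path $z=e_1e_2\cdots\in\mathfrak{p}^{\infty}\subseteq X$ with $s(z)=v$. Then $z$ has length at least $1$ and $s(z)=v\in A$, so $z\in\sigma^{|\alpha|}(D_{(\alpha,A)})$ by the formula above; but $z$ is not a length-zero element and $s(z)=v\notin B$, so $z\notin\sigma^{|\beta|}(D_{(\beta,B)})$. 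Hence the images differ.

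The main obstacle is the ``if'' direction, and specifically the verification that the continuation set $\{\gamma': \alpha\gamma'\in X\}$ equals $\{\gamma'\in X:|\gamma'|\geq1,\ s(\gamma')\in A\}$ regardless of $\alpha$, together with the identification of the length-zero part through Lemma \ref{lem:sameunitsameMIE}. The bookkeeping that $\sigma^{|\alpha|}$ is well defined on $D_{(\alpha,A)}$ is routine, and the ``only if'' direction is straightforward once the no-sinks hypothesis is used to build an infinite path from the distinguishing vertex.
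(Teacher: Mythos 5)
Your proof is correct and follows essentially the same route as the paper's: both directions rest on the same ingredients, namely the case analysis over length-zero, finite, and infinite continuations, Lemma \ref{lem:sameunitsameMIE} to make the minimal-infinite-emitter part of the cylinder independent of the path, and the no-sinks hypothesis to produce an element $\gamma$ with $s(\gamma)=v$ separating the two images when $A\neq B$. Packaging the ``if'' direction as an explicit formula for $\sigma^{|\alpha|}(D_{(\alpha,A)})$ rather than an element-chase, and the ``only if'' direction as a contraposition rather than a contradiction, are only cosmetic differences.
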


\begin{proof}
Assume that $\sigma^{|\alpha|}(D_{(\alpha,A)})=\sigma^{|\beta|}(D_{(\beta,B)})$, but $A\neq B$. Without loss of generality we may assume that there exists a vertex $v\in B\backslash A$. Since $\GG$ has no sinks, there is an element $\gamma\in X^{\geq1}$ such that $s(\gamma) =v $ and $\beta\gamma\in D_{(\beta,B)}$. Then $\gamma\in\sigma^{|\beta|}(D_{(\beta,B)}) =\sigma^{|\alpha|}(D_{(\alpha,A)})$, which implies that $\alpha\gamma\in D_{(\alpha,A)}$. However, since $s(\gamma)=v\notin A$, we have a contradiction. Hence $A=B$. 

For the converse assume that $A=B$ and let $\gamma\in \sigma^{|\alpha|}(D_{(\alpha,A)})$. Then there exists $x\in X$ such that $x=\alpha\gamma$. Either $\gamma\in X_{fin}$ or $\gamma\in \mathfrak{p}^{\infty}$. We consider these cases separately. 

First assume that $\gamma\in X_{fin}$. That is, $\gamma = (\gamma,C)$ with $s(\gamma)\in A$ and $C\in M_\gamma$. If $|\gamma|=0$ then $\gamma=(C,C)$ and $C\subset A$ and $C\in M_\alpha$. Then $C\in M_\beta$ by Lemma \ref{lem:sameunitsameMIE}. Hence $(\beta,C)\in D_{(\beta,A)}$ and $\sigma^{|\beta|}(\beta,C)=(C,C)$, which shows that $ \gamma\in \sigma^{|\beta|}(D_{(\beta,B)})$. If $|\gamma|\geq 1$, then $s(\gamma)\in A=B$ and $C\in M_\gamma$, which implies that $(\beta\gamma, C)\in D_{(\beta,A)}$. Then $(\gamma, C)=\sigma^{|\beta|}(\beta\gamma,C)\in \sigma^{|\beta|}(D_{(\beta,B)})$.
For the second case, assume that $\gamma\in \mathfrak{p}^{\infty}$. Then $s(\gamma)\in A=B$. Thus $\beta\gamma\in\mathfrak{p}^{\infty}$, which implies that $\sigma^{|\beta|}(\beta\gamma)=\gamma\in \sigma^{|\beta|}(D_{(\beta,B)})$.

The above shows that $\sigma^{|\alpha|}(D_{(\alpha,A)})\subseteq \sigma^{|\beta|}(D_{(\beta,B)})$, and the same argument with with the assumption that $\gamma\in \sigma^{|\beta|}(D_{(\beta,B)})$ gives the reverse inclusion. Hence  $\sigma^{|\alpha|}(D_{(\alpha,A)})=\sigma^{|\beta|}(D_{(\beta,B)})$, completing the proof.
\end{proof}

We now give an alternative description of the topology on $\Gs$ in terms of the cylinder sets that define the topology on $X$. Let $\alpha,\beta\in \GG^*$, $A\in \GG^0$ such that $(\alpha,A), (\beta,A)\in \mathfrak{p}$ and let $U,V\subset X$. We define
\begin{equation*}
\mathcal{Z}(U,\alpha,A,\beta,V):=\{(x,|\alpha|-|\beta|,y)\in \Gs \mid x\in U, y\in V, \sigma^{|\alpha|}(x)=\sigma^{|\beta|}(y)\in D_{(A,A)}\}.
\end{equation*}

\begin{lema}\label{lem:secondbasisfortop}
Let $$\mathscr{C}=\{\mathcal{Z}(U,\alpha,A,\beta,V)\mid \sigma^{|\alpha|}(U)=\sigma^{|\beta|}(V)\}, $$
parametrized over all $\alpha,\beta\in \GG^*$ and $A\in \GG^0$ such that $(\alpha,A), (\beta,A)\in \mathfrak{p}$, and with  $U\subseteq D_{(\alpha,A)}$ and $V\subseteq D_{(\beta,A)}$ compact open sets. Then $\mathscr{C}$ is a basis of compact open sets for the topology on $\Gs$ and hence $\Gs$ is ample.
\end{lema}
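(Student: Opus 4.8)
The plan is to show that $\mathscr{C}$ is, essentially by definition, a subfamily of the given basis $\{Z(U,m,n,V)\}$, and then that it is rich enough to recover every defining basic set; ampleness is then immediate.

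First I would argue that each $\mathcal{Z}(U,\alpha,A,\beta,V)\in\mathscr{C}$ is simply the basic set $Z(U,|\alpha|,|\beta|,V)$. Since every element of $D_{(\alpha,A)}$ has $\alpha$ as an initial segment, we get $U\subseteq D_{(\alpha,A)}\subseteq X^{\geq|\alpha|}$ and, likewise, $V\subseteq D_{(\beta,A)}\subseteq X^{\geq|\beta|}$; the same observation makes $\sigma^{|\alpha|}$ injective on $D_{(\alpha,A)}$ and $\sigma^{|\beta|}$ injective on $D_{(\beta,A)}$, hence on $U$ and $V$. The compatibility $\sigma^{|\alpha|}(U)=\sigma^{|\beta|}(V)$ is built into $\mathscr{C}$, so it remains only to check that the clause $\sigma^{|\alpha|}(x)=\sigma^{|\beta|}(y)\in D_{(A,A)}$ in the definition of $\mathcal{Z}$ is automatic. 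Applying Lemma~\ref{lem:EqualShiftOnCylinders} with the length-zero path $(A,A)$ in the second slot (so that $\sigma^{|\beta|}=\sigma^{0}=\id$) yields $\sigma^{|\alpha|}(D_{(\alpha,A)})=D_{(A,A)}$, whence $x\in U\subseteq D_{(\alpha,A)}$ already forces $\sigma^{|\alpha|}(x)\in D_{(A,A)}$. Thus $\mathcal{Z}(U,\alpha,A,\beta,V)=Z(U,|\alpha|,|\beta|,V)$, so each member of $\mathscr{C}$ is open.

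Next I would record that each such set is a \emph{compact open bisection}. Injectivity of $\sigma^{|\beta|}|_{V}$ together with $\sigma^{|\alpha|}(U)=\sigma^{|\beta|}(V)$ makes the range map $(x,m-n,y)\mapsto x$ a bijection of $Z(U,|\alpha|,|\beta|,V)$ onto $U$, and the source map a bijection onto $V$; since $\Gs$ is \'etale these restrictions are homeomorphisms, so the set is a bisection homeomorphic to $U$. As $U$ is compact open by hypothesis, $\mathcal{Z}(U,\alpha,A,\beta,V)$ is compact open.

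The main work is the basis property: I would show every defining basic set $Z(U_0,m,n,V_0)$ is a union of members of $\mathscr{C}$. Fix $g=(x,m-n,y)\in Z(U_0,m,n,V_0)$, put $z=\sigma^m(x)=\sigma^n(y)$, and let $\alpha,\beta$ be the length-$m$ and length-$n$ initial segments of $x,y$. I choose a generalized vertex $A$ with $(\alpha,A),(\beta,A)\in\mathfrak{p}$ and $z\in D_{(A,A)}$ by cases: if $|z|\geq1$ take $A=\{s(z)\}$, a single vertex lying in $r(\alpha)\cap r(\beta)$; if $z=(C,C)$ has length zero take $A=C$, the minimal infinite emitter, noting $C\in M_\alpha\cap M_\beta$ via Lemma~\ref{lem:sameunitsameMIE}. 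Then $x\in U_0\cap D_{(\alpha,A)}$ and $y\in V_0\cap D_{(\beta,A)}$, and by Lemma~\ref{lem:EqualShiftOnCylinders} we have $\sigma^m(D_{(\alpha,A)})=D_{(A,A)}=\sigma^n(D_{(\beta,A)})$. Setting $W=\sigma^m(U_0\cap D_{(\alpha,A)})\cap\sigma^n(V_0\cap D_{(\beta,A)})$, a compact open subset of $D_{(A,A)}$ containing $z$, and $U'=(\sigma^m|_{D_{(\alpha,A)}})^{-1}(W)$, $V'=(\sigma^n|_{D_{(\beta,A)}})^{-1}(W)$, injectivity of the shift on these cylinders gives $U'\subseteq U_0$, $V'\subseteq V_0$, $\sigma^m(U')=W=\sigma^n(V')$, and $x\in U'$, $y\in V'$. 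Hence $g\in\mathcal{Z}(U',\alpha,A,\beta,V')\subseteq Z(U_0,m,n,V_0)$ with $\mathcal{Z}(U',\alpha,A,\beta,V')\in\mathscr{C}$. Together with the first paragraph this shows $\mathscr{C}$ and the defining family generate the same topology, so $\mathscr{C}$ is a basis; since its members are compact open bisections, $\Gs$ is ample.

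I expect the last paragraph to be the main obstacle: the correct choice of the generalized vertex $A$ in the two regimes (positive length versus the length-zero minimal-infinite-emitter case) so that $(\alpha,A),(\beta,A)\in\mathfrak{p}$ and $z\in D_{(A,A)}$, and the verification that $W$, $U'$, $V'$ are genuinely compact open with matching shift images. This hinges on the shift restricting to a homeomorphism from each cylinder $D_{(\alpha,A)}$ onto $D_{(A,A)}$ and on condition~(RFUM) ensuring compactness of the relevant cylinder sets.
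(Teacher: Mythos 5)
Your proposal follows essentially the same route as the paper: first identify each $\mathcal{Z}(U,\alpha,A,\beta,V)$ with the defining basic set $Z(U,|\alpha|,|\beta|,V)$, then refine an arbitrary basic set $Z(U_0,m,n,V_0)$ around each of its points by an element of $\mathscr{C}$ built from the length-$m$ and length-$n$ initial segments of the two coordinates. (The only stylistic difference is compactness of the $\mathscr{C}$-sets: the paper uses a net argument, while your bisection/\'etale argument, identifying the set with $U$ via the range map, is a clean alternative.)

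The one step that fails as written is the assertion that $W=\sigma^m(U_0\cap D_{(\alpha,A)})\cap\sigma^n(V_0\cap D_{(\beta,A)})$ is compact: the defining basis only requires $U_0$ and $V_0$ to be \emph{open} sets on which the shifts are injective, so $W$ is open but in general not compact, and hence $U'$ and $V'$ need not be compact; consequently $\mathcal{Z}(U',\alpha,A,\beta,V')$ need not belong to $\mathscr{C}$. The repair is routine and uses exactly the ingredient you flag at the end: under Condition~(RFUM) the cylinder sets are compact open and form a basis of $X$, so one may choose a basic compact open set $W'$ with $z\in W'\subseteq W$ and replace $U'$, $V'$ by the preimages of $W'$ under the homeomorphisms $\sigma^m|_{D_{(\alpha,A)}}$ and $\sigma^n|_{D_{(\beta,A)}}$; these are compact open, contained in $U_0$ and $V_0$, have matching shift images, and still capture $g$. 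With this one adjustment your argument is complete --- and in fact more careful than the paper's own proof, which inserts the original (possibly non-compact) $U,V$ directly into $\mathcal{Z}(U,\alpha,r(\alpha)\cap r(\beta),\beta,V)$ without verifying membership in $\mathscr{C}$.
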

\begin{proof}
We show that $\mathscr{C}$ is a collection of open subsets such that for any open set $W\subseteq \Gs$, and each $w\in W$, there is a $C\in\mathscr{C}$ such that $w\in C\subseteq W$.

Let $\mathcal{Z}(U,\alpha,A,\beta,V)\in \mathscr{C}$. Then since $D_{(\alpha,A)} \subseteq X^{\geq|\alpha|}$ and $D_{(\beta,A)} \subseteq X^{\geq|\beta|}$ are open it follows that $U\subseteq D_{(\alpha,A)} \subseteq X^{\geq|\alpha|}$ and $V\subseteq D_{(\beta,A)} \subseteq X^{\geq|\beta|}$ are open. It is clear that $\sigma^{|\alpha|}|_U$ and $\sigma^{|\beta|}|_V$ are injective, since we shift elements of the form $\alpha x\in D_{(\alpha,A)}$ by the length of $|\alpha|$ (and similarly for $\beta$). Also, $\sigma^{|\alpha|}(U)=\sigma^{|\beta|}(V)$ follows by definition. Hence $\mathcal{Z}(U,\alpha,A,\beta,V)=Z(U,|\alpha|,|\beta|,V)$  is an open set in the topology of $\Gs$, which shows that $\mathscr{C}$ consists of a collection of open sets. 

Let $W\subseteq \Gs$ be an open set with $(x,k,y)\in W$. Then there exists a basis element $ Z(U,m,n,V)$ of the topology on $\Gs$ such that $(x,k,y)\in  Z(U,m,n,V)\subseteq W$. Then $k=m-n$ for some $m,n\in\N$ with $\sigma^m(x)=\sigma^n(y)\in X$. Let $z:=\sigma^m(x)=\sigma^n(y)$. Hence there exists $\alpha, \beta\in\GG^*$ such that $|\alpha|=m,|\beta|=n$ and $\alpha z, \beta z\in X$. Note that $r(\alpha)\cap r(\beta)\neq \emptyset$, since $s(z)\in r(\alpha)\cap r(\beta)\neq \emptyset$. Then  $C=\mathcal{Z}(U,\alpha,r(\alpha)\cap r(\beta),\beta,V) \subseteq  Z(U,m,n,V)\subseteq W$. 

Finally we show that each $\mathcal{Z}(U,\alpha,A,\beta,V)\in \mathscr{C}$ is a compact set. To see this, take any net $\{(x_i,k_i,y_i)\}$ in $\mathcal{Z}(U,\alpha,A,\beta,V)\in \mathscr{C}$. Then the first and third coordinates will each have a convergent subnet due to the compactness of $U$ and $V$, say to $x$ and $y$, and the middle coordinate stays constant for all $i$. Thus the net has a subnet converging to $\{(x,|\alpha|-|\beta|,y)\}$, showing compactness. Now, since $\Gs$ is an \' etale groupoid with a basis of compact open sets, it follows that $\Gs$ is ample.
\end{proof}

Next we extend some known results of graphs to ultragraphs that relate the properties of the graph with certain topological properties of its associated groupoid. We begin with some definitions. 
A finite path $\al\in \GG^*$ with $|\al|>0$ is a \textit{loop} if $s(\al)\in r(\al)$. We say $\al$ is a loop based at $A\in \GG^0$ if $s(\al)\in A$. If $\al=\al_1\ldots\al_n$ is a loop, then $\al$  is a \textit{simple loop} if $s(\al_i)\neq s(\al_1)$ for  $i\neq 1$; that is, the loop does not pass through $s(\al_1)$ multiple times. An \textit{exit} for a loop $\al=\al_1\ldots\al_n$ is either of the following:
\begin{enumerate}
    \item an edge $e\in \GG^1$ such that there exists an $i$ for which $s(e)\in r(\al_i)$,  but $e\neq  \al_{i+1}$,
    \item a sink $w$ such that $w \in r(\al_i)$ for some $i$.
\end{enumerate}

\begin{remark}
Since we are working only with ultragraphs that have no sinks, in this paper an exit for a loop takes only form 1. above. So for a simple loop $\alpha_1\ldots \alpha_n$ with no exists we have that $r(\alpha_i)=\{s(\alpha_{i+1})\}$ for all $i$, with the convention that $\alpha_{n+1}=\alpha_1$.
\end{remark}

An ultragraph $\GG$ satisfies \textit{Condition (L)} if every loop has an exit. An infinite path $x\in\mathfrak{p}^{\infty}$ is called \textit{eventually periodic} if $x=\al\gamma^{\infty}$ for some finite path $\al$ and some loop $\gamma$, where $\gamma^\infty$ denotes the infinite path $\gamma \gamma \gamma \ldots$. An infinite path $x\in\mathfrak{p}^{\infty}$ is called \textit{wandering} if $ |\{i\in\N\mid s(x_i)=v\}|<\infty$ for every $v\in G^0$. An infinite wandering path $x\in\mathfrak{p}^{\infty}$ has a \textit{semi-tail} if $|\{i\in\N: |\varepsilon(s(x_i))|>1\text{ or }|r(x_i)|>1\}|<\infty$.

Note that if an infinite path $\alpha_1 \alpha_2 \ldots$ in a graph is wandering and eventually each vertex emits only one edge (that is, there is $N$ such that $|s^{-1}(s(\alpha_i))|=1$ for all $i>N$), then by definition the path has a semi-tail, and hence it is an isolated point. This need not be the case in ultragraphs: suppose $\al=\al_1\ldots\in\mathfrak{p}^{\infty}$ is such that $r(\al_i)=\{s(\al_{i+1}),v_i\}$ for distinct vertices $s(\al_1),v_1,s(\al_2),v_2\ldots$. Then $|\{j\in\N \mid s(\al_j)=s(\al_i)\}|=1$ for every $i\in\N$ and $\al$ is wandering. However
$\alpha$ does not have a semi-tail, because $ |r(\alpha_i)|>1$ for every $i\in\N$, and is therefore not an isolated point, because we can deviate from the path at any $r(\alpha_i)$.

 The next proposition characterizes all the isolated points in $X$.  

\begin{proposicao}\label{prop:isoltedpoints}
Let $\GG$ be an ultragraph with no sinks that satisfies condition (RFUM) and let $X$ be its associated edge shift space. 
\begin{enumerate}
    \item An eventually periodic path $x=\al\gamma^{\infty}\in\mathfrak{p}^{\infty}$ is an isolated point if and only if $\gamma$ does not have an exit.
    \item A wandering path $x\in\mathfrak{p}^{\infty}$ is isolated point if and only if $x$ has  a semi-tail. 
\end{enumerate}
These are the only isolated points.
\end{proposicao}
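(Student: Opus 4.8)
The plan is to reduce the entire statement to a single combinatorial criterion on an infinite path and then read off the three claims. Call an infinite path $x=x_1x_2\cdots\in\mathfrak{p}^{\infty}$ \emph{eventually rigid} if there is an $N$ such that for every $i\ge N$ the range $r(x_i)$ is a single vertex and that vertex $s(x_{i+1})$ emits only the edge $x_{i+1}$ (equivalently $|r(x_i)|=1$ and $|\varepsilon(s(x_{i+1}))|=1$ for all $i\ge N$). I would first prove the key lemma: \emph{an infinite path $x$ is an isolated point of $X$ if and only if it is eventually rigid, and no point of $X_{fin}$ is isolated}. Everything else is bookkeeping on top of this.

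For the lemma, recall that $x$ is isolated exactly when some basic set from Section~\ref{sec:UltraGraphs} equals $\{x\}$. For the ``if'' direction, assuming $x$ is eventually rigid with threshold $N$, I would set $\beta=x_1\cdots x_N$ and $B=r(x_N)=\{s(x_{N+1})\}$ and check directly that $D_{(\beta,B)}=\{x\}$: since $B$ is a single non-infinite-emitter vertex the minimal-infinite-emitter part of $D_{(\beta,B)}$ is empty, and rigidity forces every path $\beta\gamma'$ with $s(\gamma')\in B$ to follow $x_{N+1}x_{N+2}\cdots$ edge by edge and never to terminate at an infinite emitter, so $\beta\gamma'=x$. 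For the ``only if'' direction I would take a basic singleton neighbourhood of $x$ and use the no-sinks hypothesis as a branching engine: if at some stage past the base point either $r(x_i)$ contained a second vertex or $s(x_i)$ emitted a second edge, that vertex or edge would generate a genuinely different path lying in the same basic set, contradicting the singleton property. The neighbourhoods of the form $D_{(\beta,B),F}$ need more care, since the finite set $F$ can absorb the extra first edges out of $B$; here I would argue that $\varepsilon(B)\setminus F$ must be the single edge $x_{|\beta|+1}$, and that, because $F$ constrains only the first edge, everything beyond $r(x_{|\beta|+1})$ must already be rigid. Finally, for points of $X_{fin}$, that is $(\beta,B)$ with $B$ a minimal infinite emitter, every basic neighbourhood contains the paths $\beta\gamma'$ whose first edge ranges over $\varepsilon(B)\setminus F$, an infinite set because $\varepsilon(B)$ is infinite and $F$ is finite; hence such a point can never be isolated.

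With the lemma in hand the three claims follow quickly. For (1), using the remark following the definition of an exit I would show that a loop $\gamma$ has no exit if and only if $r(\gamma_j)=\{s(\gamma_{j+1})\}$ and $s(\gamma_{j+1})$ emits only $\gamma_{j+1}$ for each $j$; since these conditions recur verbatim along $\gamma^{\infty}$, the eventually periodic path $\alpha\gamma^{\infty}$ is eventually rigid exactly when $\gamma$ has no exit. For (2), the semi-tail condition $|\{i:|\varepsilon(s(x_i))|>1\text{ or }|r(x_i)|>1\}|<\infty$ is precisely the statement that $x$ is eventually rigid, so a wandering path is isolated iff it has a semi-tail. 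For ``these are the only isolated points'' I would combine the lemma (no $X_{fin}$ point is isolated) with the observation that an eventually rigid path has a deterministic tail: if some vertex recurs in that tail then determinism forces the tail to be periodic, so $x$ is eventually periodic; otherwise every vertex occurs only finitely often, so $x$ is wandering. Thus every isolated point falls under (1) or (2).

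I expect the main obstacle to be the ``only if'' half of the key lemma, specifically handling the $D_{(\beta,B),F}$ neighbourhoods and verifying that no \emph{finite} path (an element of $X_{fin}$) can slip into a putative singleton neighbourhood; aligning the thresholds between the ``first edge controlled by $F$'' step and the genuinely forced tail is the fiddly part. The rest is routine once the branching-from-no-sinks argument and the deterministic-tail dichotomy are in place.
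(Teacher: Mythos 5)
Your proposal is correct, and it uses the same two basic ingredients as the paper's proof (the no-sinks hypothesis to branch off a second point in any neighbourhood of a non-rigid path, and an explicit singleton cylinder for rigid paths), but it organizes them differently. The paper proves the four implications of (1) and (2) separately -- an exit yields a branching point for (1), no exit forces $D_{(\al\gamma,r(\gamma))}=\{x\}$, and the analogous pair for semi-tails -- and then disposes of the remaining infinite paths by a third, independent argument: if $x$ is neither eventually periodic nor wandering, a recurring source vertex yields a loop $\gamma=x_m\cdots x_{n-1}$, and the eventually periodic point $x_1\cdots x_{m-1}\gamma^{\infty}$ is a second point in every neighbourhood of $x$. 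Your single ``eventually rigid'' lemma (whose condition is just the semi-tail condition read for arbitrary infinite paths, and which is equivalent to ``no exit'' along a periodic tail) merges the four implications into one biconditional; that much is a repackaging rather than a new idea. The genuine divergence is in the final claim: you never need the paper's loop construction, because rigidity makes the tail deterministic, so a source vertex repeated past the threshold forces eventual periodicity, and otherwise the path is wandering -- the dichotomy is automatic. This buys a cleaner logical structure (one combinatorial criterion, three corollaries), at the cost of handling the $D_{(\beta,B),F}$-neighbourhoods and the $X_{fin}$ points inside the lemma, which you do correctly; note only that your claim that $\varepsilon(B)\setminus F$ must reduce to the single edge $x_{|\beta|+1}$ is true but not needed, since rigidity only concerns indices $i>|\beta|+1$, which lie beyond the reach of $F$.
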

\begin{proof}
1. Assume that $x=\al\gamma^{\infty}\in\mathfrak{p}^{\infty}$ is isolated, but that $\gamma$ has an exit. Since $\GG$ has no sinks there is an edge $e\in\GG^1$ such that $s(e)\in r(\gamma_i)$ for some $0<i\leq |\gamma|$ and $e\neq \gamma_{i+1}$. Let $D_{(\be,A)F}$ be any basic open neighborhood of $x$. Then $\be<x$   and thus there is $n\in\N$ such that $\be<\al\gamma^n$. Let $y=\al\gamma^n\gamma_1\ldots\gamma_i e$. Then $D_{(y,r(e))}\subset D_{(\be,A)F}$ and contains a point distinct from $x$. Hence every neighborhood of $x$ contains a point different from $x$, which contradicts that $\{x\}$ is open.

For the converse, assume that $x=\al\gamma^{\infty}\in\mathfrak{p}^{\infty}$ and that $\gamma$ has no exits. Since $\GG$ has no sinks and $\gamma$ has no exits, it follows that $|r(\gamma_i)|=1$, for every $i=1,\ldots,|\gamma|$. Therefore $D_{(\al\gamma,r(\gamma))}$ only contains infinite paths in $X$. Moreover, there is exactly one infinite path, namely $x$, implying that $\{x\}=D_{(\al\gamma,r(\gamma))}$ and is thus open. 

2. Let $x\in\mathfrak{p}^{\infty}$ be a wandering isolated point. Let $D_{(\be,A)F}$ be any basic open neighborhood of $x$. Hence $\be<x$.  If $x$ does not have a semi-tail, then  there is an $i>|\be|+1$ such that $|r(x_i)|>1$ or $ |\varepsilon(s(x_i))|>1$. If $|r(x_i)|>1$ then, since $\GG$ has no sinks, there is an edge $e\in\GG^1$, $e\neq x_{i+1}$, such that $s(e)\in r(x_i)$. Then there is an infinite path (distinct from $x$) $x_1\ldots x_i e \ldots \in D_{(\be,A)F}$. 
If $ |\varepsilon(s(x_i))|>1$ then there is an edge $e\neq x_i$ such that $s(e)=s(x_i)$ and hence there is an infinite path  $x_1\ldots x_{i-1} e \ldots \in D_{(\be,A)F}$ (distinct from $x$).
Therefore every neighborhood of $x$ contains a point different from $x$, which contradicts that $\{x\}$ is open.

Conversely, assume that $x\in\mathfrak{p}^{\infty}$ is wandering and has a semi-tail. Then there is $n_0\in\N$ such that $ |\varepsilon(s(x_n))|=1$ and $|r(x_n)|=1$ for all $n>n_0$. Hence $D_{(x_1\ldots x_{n+1}, r(x_{n+1}))}=\{x\}$ for all $n>n_0$ and thus $\{x\}$ is open.

 Finally, we show that these are only types of isolated points. Let $x$ be an isolated point. We claim that $x\in\mathfrak{p}^{\infty}$. To see this claim, suppose that $x$ is not an infinite path and let $D_{(\al,A)F}$ be any neighborhood of $x$. Then, since we do not have any sinks, we can extend $x$ to an infinite path $\tilde{x}=\tilde{x}_1\tilde{x}_2\ldots$ such that $\tilde{x}_{|\al|+1}\notin F$. Then $\tilde{x}\in D_{(\al,A)F}$, contradicting the fact that  $x$ is an isolated point. Hence $x$ must be an infinite path if it is an isolated point. If $x=x_1x_2\ldots \in\mathfrak{p}^{\infty}$ is neither eventually periodic nor wandering, then there exists $v\in G^0$ such that $|\{i\in\mathbb{N}: s(x_i)=v\}|=\infty$. Let $D_{(\alpha,A)F}$ be any neighborhood of $x$, and let $m,n$ be any indices such that $m<n$, $s(x_m)=s(x_n)=v$ and  $|\alpha|<|x_1\cdots x_m|$. Note that $\gamma:=x_{m}\cdots x_{n-1}$ is a loop. Then $x$ and $y:=x_1\cdots x_{m-1}\gamma^{\infty} $ are two distinct points and both are contained in $D_{(\alpha,A)F}$. However, this contradicts that $x$ is an isolated point, since  $D_{(\alpha,A)F}$ is arbitrary.
\end{proof}


\begin{proposicao} \label{prop:effectiveCharac}
Let $\GG$ be an ultragraph with no sinks and that satisfies condition (RFUM). Then the following are equivalent: 
\begin{enumerate}
    \item The groupoid $\Gs$ is effective.
    \item The ultragraph $\GG$ satisfies condition (L).
    \item The set of all elements in $X$ which are not eventually periodic is dense in $X$.
    \item The groupoid $\Gs$ is topologically principal.
\end{enumerate}   
\end{proposicao}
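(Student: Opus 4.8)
The plan is to first reformulate the groupoid conditions (1) and (4) as statements about eventually periodic points of the shift space, and then to run the cycle $(1)\Rightarrow(2)\Rightarrow(3)\Rightarrow(4)\Rightarrow(1)$ together with a free equivalence $(3)\Leftrightarrow(4)$. The starting point is a description of the isotropy of $\Gs$: a unit $x\in X$ has nontrivial isotropy exactly when $(x,m-n,x)\in\Gs$ for some $m\neq n$, i.e.\ when $\sigma^m(x)=\sigma^n(x)$ with $m\neq n$. For $x\in X_{fin}$ the iterates $\sigma^j(x)$ all have distinct lengths, so this cannot occur and the isotropy is trivial; for $x\in\mathfrak{p}^\infty$ it occurs precisely when $x$ is eventually periodic (if $x=\alpha\gamma^\infty$ then $\sigma^{|\alpha|}(x)=\sigma^{|\alpha|+|\gamma|}(x)$, and conversely $\sigma^m(x)=\sigma^n(x)$ forces a tail of $x$ to be periodic). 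Hence the units of trivial isotropy are exactly the non-eventually-periodic elements of $X$, and since (4) asserts density of the former while (3) asserts density of the latter, $(3)\Leftrightarrow(4)$ is immediate.

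For $(1)\Rightarrow(2)$ I would argue contrapositively. If $\GG$ fails Condition~(L), choose a loop $\gamma$ without an exit. By Proposition~\ref{prop:isoltedpoints} the point $\gamma^\infty$ is isolated, so $\{\gamma^\infty\}$ is compact open in $X$, and since $\sigma^{|\gamma|}(\gamma^\infty)=\gamma^\infty$ the basic set $Z(\{\gamma^\infty\},2|\gamma|,|\gamma|,\{\gamma^\infty\})$ equals the single arrow $(\gamma^\infty,|\gamma|,\gamma^\infty)$ and is open in $\Gs$. This is a non-unit lying in an open subset of the isotropy bundle $\Gs'$, so $\operatorname{int}(\Gs')\neq\Gs^{(0)}$ and $\Gs$ is not effective. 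For $(4)\Rightarrow(1)$ I would invoke the standard implication ``topologically principal $\Rightarrow$ effective'' for étale groupoids with Hausdorff unit space: since $\Gs$ is Hausdorff and étale, $\Gs^{(0)}$ is clopen, so if $\Gs$ were not effective then $\operatorname{int}(\Gs')\setminus\Gs^{(0)}$ would be a nonempty open set containing a basic compact open bisection $W$ disjoint from $\Gs^{(0)}$; then $s(W)$ is a nonempty open set each of whose points carries nontrivial isotropy, contradicting the density of units with trivial isotropy.

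The remaining and main step is $(2)\Rightarrow(3)$. The key observation is that the set $P\subseteq X$ of eventually periodic points is countable, since every element of $P$ has the form $\alpha\gamma^\infty$ for a finite path $\alpha\in\GG^*$ and a loop $\gamma$, and there are only countably many such pairs. Suppose toward a contradiction that $P$ is not nowhere dense, so some nonempty open $U\subseteq X$ satisfies $U\subseteq P$. As an open subspace of the locally compact Hausdorff space $X$, $U$ is locally compact Hausdorff and hence a Baire space; being a nonempty countable Baire space, it must contain a point $y$ isolated in $U$, and hence isolated in $X$ because $U$ is open. But $y\in P$ is then an isolated eventually periodic path, so by Proposition~\ref{prop:isoltedpoints} its underlying loop has no exit, contradicting Condition~(L). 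Therefore $P$ has empty interior, the non-eventually-periodic points are dense, and (3) holds.

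Assembling $(3)\Leftrightarrow(4)$, $(1)\Rightarrow(2)$, $(2)\Rightarrow(3)$ and $(4)\Rightarrow(1)$ closes the cycle and yields the full equivalence. I expect $(2)\Rightarrow(3)$ to be the main obstacle: the no-exit-loop direction is geometric and quick, but ruling out an open set consisting entirely of eventually periodic points is best handled by the countability-plus-Baire argument above rather than by a naive attempt to directly construct an aperiodic path inside every basic open set.
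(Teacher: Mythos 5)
Your proof is correct, but it departs from the paper's in the two places that carry the real weight, so a comparison is worthwhile. First, you collapse $(3)\Leftrightarrow(4)$ into a single observation by identifying the trivial-isotropy units of $\Gs$ exactly with the non-eventually-periodic elements of $X$ (points of $X_{fin}$ having trivial isotropy for length reasons, infinite paths having nontrivial isotropy precisely when eventually periodic); the paper uses this same characterization but only in the direction $(3)\Rightarrow(4)$, and it closes the cycle by citing \cite{MR3189105} for $(4)\Rightarrow(1)$, which you instead reprove with the standard clopen-unit-space argument --- both are fine. Second, and more substantially, your $(2)\Rightarrow(3)$ is genuinely different: the paper argues constructively, taking an exit of the loop $\gamma$, passing to a smaller cylinder inside the given neighborhood of $\gamma^{\infty}$, and iterating, asserting that ``in this way we obtain an infinite path that is not eventually periodic'' --- an explicit construction whose limit step (why the resulting path avoids \emph{every} eventual period) is left implicit. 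You instead observe that the eventually periodic points form a countable set $P$ (countably many pairs consisting of a finite edge sequence and a loop), so a nonempty open set $U\subseteq P$ would be a countable, $T_1$, locally compact Hausdorff --- hence Baire --- space, forcing an isolated point of $X$ inside $U$; Proposition \ref{prop:isoltedpoints} then produces a no-exit loop, contradicting Condition (L). Your route buys rigor and brevity (no nested-cylinder bookkeeping, and density is obtained for all open sets at once rather than around periodic points), at the price of being non-constructive and invoking the Baire category theorem; the paper's route exhibits an aperiodic path explicitly, in the combinatorial spirit of the rest of Section \ref{sec:groupoid}, but as written it glosses over exactly the convergence issue that your argument is designed to avoid.
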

\begin{proof}
$(1)\Rightarrow(2)$: Assume that $\Gs$ does not satisfy Condition (L). Let $\gamma$ be a loop without an exit. By Proposition \ref{prop:isoltedpoints}(1), $\gamma^{\infty}$ is an isolated point. Then $|r(\gamma_i)|=1$ for $i=1,\ldots,|\gamma|$. Hence, for all $n\in\N$, the only point in $D_{(\gamma^n,r(\gamma))}$ is $\gamma^{\infty}$. Thus $\mathcal{Z}(D_{(\gamma^2,r(\gamma))},\gamma^2,r(\gamma),\gamma,D_{(\gamma,r(\gamma))})=\{(\gamma^{\infty},|\gamma|,\gamma^{\infty})\}$ is an open subset of the stabilizer subgroup at $(\gamma^{\infty},0,\gamma^{\infty})$, which is not contained in $\Gs^{(0)}$ and thus $\Gs$ is not effective.

$(2)\Rightarrow (3)$: Assume that $\GG$ satisfies Condition (L). Let $\gamma^{\infty}\in X$ and $D_{(\al,A)F}$ any neighborhood of $\gamma^{\infty}$. Then there exist $e\in\GG^1$ such that $s(e)\in r(\gamma_i)$ for some $0\leq i \leq|\gamma|$ and also $n\in\N$ such that  $D_{(\gamma^{n}\gamma_1\ldots\gamma_i e,r(e))}\subset D_{(\al,A)F}$. Let $y\in D_{(\gamma^{n}\gamma_1\ldots\gamma_i e,r(e))}$. Then $y$ is distinct from $x$. If $y$ is also eventually periodic, then we repeat the process above of taking an exit and forming a new neighborhood contained in  $D_{(\al,A)F}$. In this way we obtain an infinite path that is not eventually periodic contained in $D_{(\al,A)F}$. Hence the neighborhood $D_{(\al,A)F}$ of $\gamma^{\infty}$ has non-empty intersection with the subset of $X$ consisting of points which are not eventually periodic, showing that this set is dense in $X$.

$(3)\Rightarrow (4)$: Assume the set of elements in $X$ which are not eventually periodic is dense in $X$. Suppose that $(x,0,x)\in\Gs^{(0)}$ has non-trivial isotropy and let $U\subset\Gs^{(0)}$ be an open neighborhood of $(x,0,x)$. Then there is a $(y,0,y)\in U$ such that $y$ in infinite and not eventually periodic. Since $(y,0,y)\in\Gs^{(0)}$ has non-trivial isotropy if and only if $x$ is eventually periodic and infinite, it follows that $(y,0,y)$ has trivial isotropy, which shows that $\Gs$ is topologically principal.

$(4)\Rightarrow (1)$: This is a general fact for locally compact Hausdorff groupoids \cite{MR3189105}. 
\end{proof}

Let $\alpha,\beta\in \GG^*$ and $A\in \GG^0$ such that $(\alpha,A),(\beta,A)\in\mathfrak{p}$ and let $F\subseteq \varepsilon(A)$ be finite . Define 
\begin{eqnarray*}
\mathcal{Z}(\alpha,\beta,A,F_A)&:=& \mathcal{Z}(D_{(\alpha,A)F},\alpha,A,\beta,D_{(\beta,A)F}) \\
&=&\{(\alpha \xi, |\alpha|-|\beta|, \beta  \xi): \xi \in D_{(A,A)F}\}. 
\end{eqnarray*}
Note that all the sets  $\mathcal{Z}(\alpha,\beta,A,F_A)$ are compact open, since they are in $\mathscr{C}$. 

Our next goal is to prove that the sets $\mathcal{Z}(\alpha,\beta,A, F_A)$ also forms basis for the topology on $\Gs$ (Lemma \ref{lem:thirdbasisfortop}). This basis will allow us to characterize the open bisections and the elements of the topological full group of $\Gs$ (Proposition \ref{prop:characbisecandfullgrp}).

\begin{lema}\label{lem:cilynderIntersections}
Let $\al,\ap,\in\GG^*$ and $A,B\in\GG^0$ be such that  $(\al,A),  (\ap,B),  \in \mathfrak{p}$, and let $F_A\subseteq \varepsilon(A)$ and $F_B\subseteq \varepsilon(B)$ be finite subsets. Then $D_{(\al,A)F_{A}}\cap D_{(\ap,B)F_B}$ equals either 
\begin{enumerate}
    \item $\emptyset$, or
    \item $D_{(\al,A)F_A}$, or 
    \item $D_{(\ap,B)F_B}$, or 
    \item $D_{(\al,A\cap B)F_A\cup F_B}$.
\end{enumerate}
In addition, if $\al=\ap$, then 
\[ D_{(\al,A)F_A}\cup D_{(\ap,B)F_B} = D_{(\al,A\cup B)F_A\cap F_B}\]
\end{lema}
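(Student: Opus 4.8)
The plan is to establish the four-way intersection formula by a case analysis on how the finite paths $\al$ and $\ap$ are related, and then to obtain the union formula by a direct membership comparison in the case $\al=\ap$. Throughout I would split each cylinder set into its length-$|\al|$ part (the finite ultrapaths $(\al,C)$ with $C\in M_\al$) and its part of length exceeding $|\al|$ (the paths $\al\gamma'$ controlled by their first edge $\gamma_1'$), since membership is governed separately on these two pieces.

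For the intersection, note that $\al$ and $\ap$ fall into exactly one of four situations: they are disjoint paths, $\al=\ap$, $\al$ is a proper initial segment of $\ap$, or $\ap$ is a proper initial segment of $\al$. If $\al,\ap$ are disjoint, then $(\al,A)$ and $(\ap,B)$ are disjoint ultrapaths by Definition~\ref{dfn:ultraDisjoint}, so the remark on disjoint cylinder sets gives $D_{(\al,A)}\cap D_{(\ap,B)}=\emptyset$, hence case~(1). If $\al$ is a proper initial segment of $\ap$, write $\ap=\al\gamma$ with $|\gamma|\geq 1$; every element of $D_{(\ap,B)F_B}$ then has length $>|\al|$ and begins with the edge $\gamma_1$, so it lies in $D_{(\al,A)F_A}$ precisely when $\gamma_1\in\varepsilon(A)\setminus F_A$, and this condition is the same for all such elements. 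Thus either $\gamma_1\notin\varepsilon(A)\setminus F_A$ (which includes the disjointness subcase $s(\gamma_1)\notin A$ of Definition~\ref{dfn:ultraDisjoint}) and the intersection is empty (case~(1)), or $D_{(\ap,B)F_B}\subseteq D_{(\al,A)F_A}$ and we are in case~(3). The situation $\ap<\al$ is symmetric and gives case~(2) or case~(1).

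The substantive case is $\al=\ap$. Here the common length-$|\al|$ elements are exactly the $(\al,C)$ with $C\in M_\al$ and $C\subseteq A\cap B$, while the common longer elements are the $\al\gamma'$ with $\gamma_1'\in(\varepsilon(A)\setminus F_A)\cap(\varepsilon(B)\setminus F_B)$. Using $\varepsilon(A)\cap\varepsilon(B)=\varepsilon(A\cap B)$ this first-edge set equals $\varepsilon(A\cap B)\setminus(F_A\cup F_B)$. If $A\cap B=\emptyset$ both pieces are empty and we get case~(1); otherwise $A\cap B\in\GG^0$, since $\GG^0$ is closed under nonempty finite intersections, and the two pieces reassemble to exactly $D_{(\al,A\cap B)F_A\cup F_B}$, i.e.\ case~(4).

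For the union identity (with $\al=\ap$) I would compare the two pieces analogously; the longer part reduces to a set-theoretic comparison of admissible first edges via $\varepsilon(A\cup B)=\varepsilon(A)\cup\varepsilon(B)$. The main obstacle is the finite part: I must show that a minimal infinite emitter $C\in M_\al$ with $C\subseteq A\cup B$ already satisfies $C\subseteq A$ or $C\subseteq B$. This is exactly where minimality is used. Since $C=(C\cap A)\cup(C\cap B)$ and $\varepsilon$ distributes over unions, $\varepsilon(C\cap A)\cup\varepsilon(C\cap B)=\varepsilon(C)$ is infinite, so at least one of $C\cap A$, $C\cap B$ is an infinite emitter; that intersection is nonempty, lies in $\GG^0$, and is contained in $C$, so Definition~\ref{minimal} forces it to equal $C$, giving the dichotomy. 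Combining this with the first-edge comparison yields $D_{(\al,A)F_A}\cup D_{(\ap,B)F_B}=D_{(\al,A\cup B)F_A\cap F_B}$.
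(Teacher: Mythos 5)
Your treatment of the intersection is correct and follows the same route as the paper's own proof: a case analysis on whether $\al$ and $\ap$ are equal, one is a proper initial segment of the other, or they are disjoint, with membership of the elements longer than $|\al|$ governed by their first edge past $\al$. In fact you supply details that the paper's proof leaves implicit: the identity $(\varepsilon(A)\setminus F_A)\cap(\varepsilon(B)\setminus F_B)=\varepsilon(A\cap B)\setminus(F_A\cup F_B)$ behind case (4), the degenerate case $A\cap B=\emptyset$, and, for the union, the minimality argument showing that $C\in M_\al$ with $C\subseteq A\cup B$ forces $C\subseteq A$ or $C\subseteq B$. That last point is genuinely needed for the finite parts and is nowhere addressed in the paper, whose case (i) is a bare assertion.

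However, the step you defer for the union --- ``the longer part reduces to a set-theoretic comparison of admissible first edges'' --- is exactly where the argument breaks down. The identity you would need is $(\varepsilon(A)\setminus F_A)\cup(\varepsilon(B)\setminus F_B)=\varepsilon(A\cup B)\setminus(F_A\cap F_B)$, and only the inclusion $\subseteq$ is automatic; the reverse inclusion requires every edge of $F_A\setminus F_B$ to lie in $\varepsilon(B)$ (and symmetrically), which can fail. Concretely, take $v\neq w$ with $\{v,w\}\subseteq r(\al)$, $\varepsilon(\{v\})=\{e\}$ and $\varepsilon(\{w\})=\{f\}$, and set $A=\{v\}$, $B=\{w\}$, $F_A=\{e\}$, $F_B=\emptyset$. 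Then $D_{(\al,A)F_A}=\emptyset$ and $D_{(\al,B)F_B}$ consists of the elements beginning with $\al f$, whereas $D_{(\al,A\cup B)F_A\cap F_B}=D_{(\al,A\cup B)\emptyset}$ also contains every element beginning with $\al e$, so the stated equality fails. The defect is therefore not only in your sketch: the union formula of the lemma is itself false in this generality, and no proof can close the gap. It does hold when both $(\al,A),(\al,B)\in X_{fin}$, since then $A=B$ by Lemma \ref{lem:sameunitsameMIE} and the identity is just De Morgan; this is the only case arising from the original basis of \cite{MR3938320}, cf. Remark \ref{cylindersets}. For the enlarged basis the correct assertion is that the union equals $D_{(\al,A\cup B)G}$ with the finite set $G=(F_A\cup F_B)\setminus\bigl((\varepsilon(A)\setminus F_A)\cup(\varepsilon(B)\setminus F_B)\bigr)$; this repaired version, combined with your minimality argument for the finite parts, is still a basic set and is all that the application in Lemma \ref{lem:disjointunionofcylindersets} actually requires.
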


\begin{proof}
We have the following four cases:

(i) If $\al=\ap$, then $D_{(\al,A)F_A}\cap D_{(\al,B)F_B}=D_{(\al,A\cap B)(F_A\cup F_B)}$. In this case we also have that $D_{(\al,A)F_A}\cup D_{(\al,B)F_B}=D_{(\al,A\cup B)(F_A\cap F_B)}$.  Note that in this case if $(\al,A), (\ap,B)\in X_{fin}$  then $A=B$, since $A,B\in M_\alpha$ (Lemma \ref{lem:sameunitsameMIE}). 

(ii) Suppose $\al<\ap$, then $\ap=\al\gamma$ with $|\gamma|>0$ and such that $\gamma_1\in \varepsilon(A)\backslash F_A$. Then $D_{(\ap,B)F_B}\subseteq D_{(\al,A)F_A}$. Hence, if $\ap_{|\al|+1}\in \varepsilon(A)\backslash F_A$ then $D_{(\ap,B)F_B}\cap D_{(\al,A)F_A} = D_{(\ap,B)F_B}$ and empty otherwise. 

(iii) By the same reasoning as in (ii), if $\ap<\al$ and $\al_{|\al|+1}\in \varepsilon(B)\backslash F_B$ then $D_{(\ap,B)F_B}\cap D_{(\al,A)F_A} = D_{(\al,A)F_A}$ and empty otherwise.  

(iv) In any other case  $D_{(\al,A)F_A}\cap D_{(\ap,B)F_B}= \emptyset$.
\end{proof}

\begin{lema}\label{lem:disjointunionofcylindersets}
 Let $U\subseteq X$ be a compact open set. Then $U$ can be written as a disjoint union of basic open cylinders sets. 
\end{lema}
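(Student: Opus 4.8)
The plan is to reduce the statement to disjointifying a \emph{finite} family of cylinder sets and then to show that the cylinder sets $D_{(\beta,B),F}$ behave like a semiring of sets. First, since $U$ is open and, by Remark~\ref{cylindersets}, the sets $D_{(\beta,B),F}$ form a basis for the topology on $X$, the set $U$ is a union of such cylinders; by compactness of $U$ this union may be taken finite, say $U=C_1\cup\cdots\cup C_n$. (Note that any family of pairwise disjoint nonempty open sets covering the compact set $U$ must be finite, so the disjoint decomposition we produce will automatically be finite.) It therefore suffices to prove that a finite union of cylinders can be rewritten as a finite disjoint union of cylinders.

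I would argue this by induction on $n$. The inductive step amounts to the following: if $C$ is a cylinder and $D_1\sqcup\cdots\sqcup D_m$ is a finite disjoint union of cylinders, then $C\setminus(D_1\sqcup\cdots\sqcup D_m)$ is again a finite disjoint union of cylinders, for then $C_1\cup\cdots\cup C_n=(C_1\cup\cdots\cup C_{n-1})\;\sqcup\;\bigl(C_n\setminus(C_1\cup\cdots\cup C_{n-1})\bigr)$ completes the induction. Writing $C\setminus\bigsqcup_j D_j=\bigcap_j(C\setminus D_j)$ and $C\setminus D_j=C\setminus(C\cap D_j)$, and recalling from Lemma~\ref{lem:cilynderIntersections} that $C\cap D_j$ is either empty, a cylinder, or all of $C$, the whole step reduces to two facts: that finite intersections of cylinders are cylinders (Lemma~\ref{lem:cilynderIntersections} again), and that the difference of two \emph{nested} cylinders is a finite disjoint union of cylinders.

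The heart of the argument is thus the nested-difference claim: if $D_{(\beta',B'),F'}\subseteq D_{(\beta,B),F}$, then the difference is a finite disjoint union of cylinders. I would prove this by induction on $|\beta'|-|\beta|$. When $\beta'=\beta\gamma$ with $|\gamma|\geq 1$, the first edge $e:=\gamma_1$ must lie in $\varepsilon(B)\setminus F$, and one peels off this branch using the identity $D_{(\beta,B),F}=D_{(\beta,B),F\cup\{e\}}\sqcup D_{(\beta e,r(\beta e))}$, which follows from $D_{(\beta,B),F}=D_{(\beta,B)}\setminus\bigcup_{\gamma_1\in F}D_{(\beta\gamma_1,r(\beta\gamma_1))}$ noted in Section~\ref{sec:UltraGraphs}. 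Since $D_{(\beta',B'),F'}\subseteq D_{(\beta e,r(\beta e))}$, the difference equals $D_{(\beta,B),F\cup\{e\}}$ together with a nested difference of strictly smaller length gap, to which induction applies. This reduces everything to the base case $\beta'=\beta$.

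For the same-path case I would invoke Condition~(RFUM) to write $B=A_1\cup\cdots\cup A_k$ with each $A_i$ a minimal infinite emitter or a single vertex, and use the same-path union and intersection formulas of Lemma~\ref{lem:cilynderIntersections} to refine $D_{(\beta,B),F}$ and $D_{(\beta,B'),F'}$ into cylinders attached to these pieces. \textbf{The main obstacle is finiteness}: a priori the leftover edge set $(\varepsilon(B)\setminus F)\setminus(\varepsilon(B')\setminus F')$ is infinite, so one cannot simply list single-edge branches. This is resolved by the observation that two distinct minimal infinite emitters $A_i\neq A_j$ share only finitely many edges: indeed $\varepsilon(A_i)\cap\varepsilon(A_j)=\varepsilon(A_i\cap A_j)$, and $A_i\cap A_j$ is either empty or a proper subset (in $\GG^0$) of the minimal infinite emitter $A_i$, hence not an infinite emitter, so $\varepsilon(A_i\cap A_j)$ is finite; overlaps involving single-vertex pieces are finite for the same reason. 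Consequently all the overlaps among the pieces $A_i$ involve only finitely many edges, and these finitely many edges can be absorbed into the finite sets $F$ of the resulting cylinders. This makes the refinement, and hence the nested difference, a finite disjoint union of cylinders, completing the induction.
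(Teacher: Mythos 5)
Your proof is correct, but it takes a genuinely different and heavier route than the paper's. The paper also begins with a finite subcover $U=\bigcup_{i=1}^{N}D_{(\al_i,A_i)F_i}$, but then it never forms set differences at all: by Lemma~\ref{lem:cilynderIntersections}, two overlapping cylinders either have strictly comparable paths, in which case one \emph{contains} the other and the smaller one is simply discarded, or they have equal paths, in which case the union formula $D_{(\al,A)F_A}\cup D_{(\al,B)F_B}=D_{(\al,A\cup B)(F_A\cap F_B)}$ (valid because Remark~\ref{cylindersets} enlarged the basis) merges them into a \emph{single} basic set; iterating this absorb-or-merge step terminates because each step reduces the number of sets by one. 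You instead run the classical semiring disjointification: closure under intersection plus the claim that a nested difference of cylinders is a finite disjoint union of cylinders, proved by peeling off first edges via $D_{(\beta,B),F}=D_{(\beta,B),F\cup\{e\}}\sqcup D_{(\beta e,r(\beta e))}$ and then a same-path base case. That base case is where the real work lies, and your key observation there --- that distinct minimal infinite emitters satisfy $\varepsilon(A_i)\cap\varepsilon(A_j)=\varepsilon(A_i\cap A_j)$, which is finite by minimality (together with the no-sinks assumption) --- is correct and is exactly what tames the a priori infinite leftover edge set. Two details you use implicitly deserve a sentence: first, Condition~(RFUM) is stated only for ranges $r(e)$, so you need the fact that under (RFUM) and no sinks \emph{every} $B\in\GG^0$ decomposes as a finite union of minimal infinite emitters and single vertices (true: that family contains the generators of $\GG^0$ and is closed under finite unions and intersections, by your own finiteness observation); second, every minimal infinite emitter contained in such a $B$ coincides with one of its pieces, which is what guarantees the points of the form $(\beta,A)$ left over in the difference are swept into the cylinders you construct. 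In exchange for this extra work your argument is more robust --- it is the standard disjointification that works for any semiring of sets --- whereas the paper's proof buys brevity by exploiting a dichotomy special to ultrapath cylinders: overlapping cylinders are either nested or mergeable, so subtraction never has to be performed.
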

\begin{proof}
Since $U$ is open and the topology on $X$ is second-countable, we can express $U$ as a countable union of basic open cylinder sets. Since $U$ is compact, this union may be taken to be finite, say 
\begin{equation}\label{eq:finiteunion}
     U= \bigcup_{i=1}^{N} D_{(\al_i,A_i)F_i}.
\end{equation}
If $D_{(\al_i,A_i)F_i}\cap D_{(\al_j,A_j)F_j}\neq\emptyset$ for some $i\neq j$, then $\al_i\leq\al_j$ or $\al_j\leq\al_i$, by Lemma \ref{lem:cilynderIntersections}. We may assume without loss of generality that $\al_i\leq\al_j$. If $\al_i<\al_j$, then $D_{(\al_j,A_j)F_j} \subseteq D_{(\al_i,A_i)F_i}$, and we may omit $D_{(\al_j,A_j)F_j}$ from the union in Equation (\ref{eq:finiteunion}). If $\al_i=\al_j$, then $D_{(\al,A)F_A}\cup D_{(\al,B)F_B}=D_{(\al,A\cup B)(F_A\cap F_B)}$, which is again a basic open set. By either taking the bigger set or by taking the union for all pairs of basic open sets  in Equation (\ref{eq:finiteunion}) with non-empty intersection we end up with a disjoint union. 
\end{proof}

\begin{lema}\label{lem:basicneighbintersections}
Let $\al,\be,\ap,\bp\in\GG^*$ and $A,B\in\GG^0$ be such that \newline $(\al,A), (\be,A), (\ap,B), (\bp,B) \in \mathfrak{p}$, and let $F_A\subseteq \varepsilon(A)$ and $F_B\subseteq \varepsilon(B)$ be finite subsets. Then 
$$ \mathcal{Z}(\alpha,\beta,A,F_A)\cap \mathcal{Z}(\ap,\bp,B,F_B) $$
equals either 
\begin{enumerate}
    \item $\emptyset$, or  
    \item $\mathcal{Z}(\alpha,\beta,A,F_A)$, or  
    \item $\mathcal{Z}(\ap,\bp,B,F_B) $, or 
    \item\label{item:lemma.intersection} $\mathcal{Z}(\al,\be,A\cap B, F_A\cup F_B)$.
    
    In addition, if $\al=\ap,\be=\bp$, then we also have that $$\mathcal{Z}(\alpha,\beta,A,F_A)\cup \mathcal{Z}(\ap,\bp,B,F_B)=\mathcal{Z}(\al,\be,A\cup B, F_A\cap F_B).$$
\end{enumerate}
\end{lema}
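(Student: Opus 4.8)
The plan is to exploit the fact that every element of $\mathcal{Z}(\al,\be,A,F_A)$ is determined by its first coordinate: such an element has the form $(\al\xi,\,|\al|-|\be|,\,\be\xi)$ with $\xi\in D_{(A,A)F_A}$, so the first coordinate $x=\al\xi$ ranges exactly over the cylinder $D_{(\al,A)F_A}$, and the third coordinate is recovered as $\be\,\sigma^{|\al|}(x)$. Hence a triple lies in $\mathcal{Z}(\al,\be,A,F_A)\cap\mathcal{Z}(\ap,\bp,B,F_B)$ if and only if three conditions hold: (a) $x\in D_{(\al,A)F_A}\cap D_{(\ap,B)F_B}$; (b) the middle coordinates agree, i.e. $|\al|-|\be|=|\ap|-|\bp|$; and (c) the two recipes for the third coordinate coincide, i.e. $\be\,\sigma^{|\al|}(x)=\bp\,\sigma^{|\ap|}(x)$. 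The whole argument reduces to analysing these three requirements, with (a) governed by Lemma~\ref{lem:cilynderIntersections} and (c) controlling the relationship between $\be$ and $\bp$.

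I would first dispose of the degenerate situations. If $\al$ and $\ap$ are incomparable then Lemma~\ref{lem:cilynderIntersections} gives $D_{(\al,A)F_A}\cap D_{(\ap,B)F_B}=\emptyset$, and a length mismatch $|\al|-|\be|\neq|\ap|-|\bp|$ violates (b); either way the intersection is empty, which is outcome~(1). This leaves the three comparability patterns $\al=\ap$, $\al<\ap$, and $\ap<\al$. In the equal case $\al=\ap$, condition (b) forces $|\be|=|\bp|$, while condition (c) reads $\be\,\zeta=\bp\,\zeta$ for the common tail $\zeta=\sigma^{|\al|}(x)$; together these force $\be=\bp$. Then (a) becomes $x\in D_{(\al,A)F_A}\cap D_{(\al,B)F_B}=D_{(\al,A\cap B)(F_A\cup F_B)}$ by Lemma~\ref{lem:cilynderIntersections}(i), and since the common $\be=\bp$ makes the third-coordinate recipe identical, the full intersection is exactly $\mathcal{Z}(\al,\be,A\cap B,F_A\cup F_B)$, which is outcome~(4). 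The union statement follows in the same way from the union clause of Lemma~\ref{lem:cilynderIntersections}, applied to the first coordinates.

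The substantive case is $\al<\ap$, say $\ap=\al\gamma$ with $|\gamma|>0$ (the case $\ap<\al$ being entirely symmetric and yielding outcome~(2)). Any common triple then has $x=\al\gamma\eta$, so $\sigma^{|\al|}(x)=\gamma\eta$ and $\sigma^{|\ap|}(x)=\eta$, and condition (c) becomes $\be\gamma\eta=\bp\eta$; since (b) forces $|\bp|=|\be|+|\gamma|=|\be\gamma|$, this collapses to $\bp=\be\gamma$. Thus a nonempty intersection already forces $\bp=\be\gamma$. Conversely, once $\bp=\be\gamma$, condition (a) via Lemma~\ref{lem:cilynderIntersections}(ii) amounts to $\gamma_1\in\varepsilon(A)\setminus F_A$, i.e. $D_{(\ap,B)F_B}\subseteq D_{(\al,A)F_A}$, and then for every $\eta\in D_{(B,B)F_B}$ the tail $\gamma\eta$ lies in $D_{(A,A)F_A}$ (its first edge is $\gamma_1\in\varepsilon(A)\setminus F_A$ and $s(\gamma)\in A$), so the triple $(\al\gamma\eta,\,|\al|-|\be|,\,\be\gamma\eta)$ belongs to $\mathcal{Z}(\al,\be,A,F_A)$. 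Hence the intersection is exactly $\mathcal{Z}(\ap,\bp,B,F_B)$ (outcome~(3)) when $\bp=\be\gamma$ and $\gamma_1\in\varepsilon(A)\setminus F_A$, and is empty (outcome~(1)) otherwise.

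I expect the main obstacle to be the tail-matching bookkeeping in this last case: one must verify that the single equation $\be\gamma\eta=\bp\eta$ together with the length constraint pins down $\bp=\be\gamma$ \emph{exactly}, and that the finite sets $F_A,F_B$ interact correctly — in particular that the first-edge condition $\gamma_1\in\varepsilon(A)\setminus F_A$ is precisely what makes $D_{(\ap,B)F_B}\subseteq D_{(\al,A)F_A}$, so that the \emph{whole} of $\mathcal{Z}(\ap,\bp,B,F_B)$ (not merely a part of it) is absorbed into $\mathcal{Z}(\al,\be,A,F_A)$. Once this is checked, the four enumerated outcomes and the union identity fall out of the three-condition reduction above.
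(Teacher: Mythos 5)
Your proof is correct, and while it follows the same broad outline as the paper's argument (a case analysis on whether $\al$ and $\ap$ are equal, strictly comparable, or incomparable, with Lemma \ref{lem:cilynderIntersections} controlling the first-coordinate cylinders), the mechanism by which you relate $\be$ to $\bp$ is genuinely different, and in fact more watertight. The paper intersects the source-side cylinders $D_{(\be,A)F_A}\cap D_{(\bp,B)F_B}$ and, via length bookkeeping, concludes only that $\be$ and $\bp$ are \emph{comparable}; it then asserts that the two cylinder inclusions force the corresponding inclusion of the $\mathcal{Z}$-sets. That implication alone is not valid: take a single vertex $v$ with two loops $e\neq f$, and compare $\mathcal{Z}(e,e,\{v\},\emptyset)$ with $\mathcal{Z}(ee,ef,\{v\},\emptyset)$; here $\al<\ap$, $\be<\bp$, the length condition and both cylinder inclusions hold, yet the intersection is empty, since $(eex,0,efx)$ is never of the form $(ey,0,ey)$. (The lemma's statement survives because $\emptyset$ is among the allowed outcomes, and the paper's argument can be repaired because nonemptiness of the intersection --- its standing hypothesis --- does force the tails to agree; but that repair is precisely the step left implicit.) Your viewpoint of $\mathcal{Z}(\al,\be,A,F_A)$ as the graph of the map $x\mapsto\be\,\sigma^{|\al|}(x)$ on $D_{(\al,A)F_A}$ supplies exactly this step: your condition (c) shows that a nonempty intersection forces $\bp=\be\gamma$ for the \emph{same} $\gamma$ with $\ap=\al\gamma$, and that the intersection is empty whenever the tails disagree. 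So your route buys precision on the one delicate point, at the cost of slightly more bookkeeping, whereas the paper's shorter argument elides the tail-matching verification you correctly identified as the crux.
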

\begin{proof}
Note that if $\mathcal{Z}(\alpha,\beta,A,F_A)\cap \mathcal{Z}(\ap,\bp,B,F_B)\neq \emptyset$, then $D_{(\al,A)F_A}\cap D_{(\ap,B)F_B}\neq \emptyset$,  $D_{(\be,A)F_A}\cap D_{(\bp,B)F_B}\neq \emptyset$ and $|\al|-|\be|=|\ap|-|\bp|$. 

By Lemma \ref{lem:cilynderIntersections} we may assume without loss of generality that $\al\leq \ap$. The same applies to $D_{(\be,A)F_A}\cap D_{(\bp,B)F_B}$. We claim that $\al=\ap$ if and only if $\be=\bp$, and also that $\al<\ap$ if and only if $\be<\bp$. To see this, first assume that $\al=\ap$. Then $|\al|=|\ap|$, which together with $|\al|-|\be|=|\ap|-|\bp|$ imply that 
$|\be|=|\bp|$. Let $x\in D_{(\be,A)F_A}\cap D_{(\bp,B)F_B}$. Then $x=\be\gamma=\bp\delta$, for some $\gamma,\delta\in X$, which implies that $\be\leq \bp$ or $\bp\leq\be$. Since  $|\be|=|\bp|$, it follows that $\be=\bp$. A similar argument gives the converse.

Assume that $\al<\ap$. Then $|\al|\leq|\ap|$ and $|\al|-|\be|=|\ap|-|\bp|$ imply that $|\be|\leq|\bp|$. Similarly to above $D_{(\be,A)F_A}\cap D_{(\bp,B)F_B}\neq\emptyset$ implies that $\be\leq \bp$ or $\bp\leq\be$. However, if $|\be|\leq|\bp|$ then $\be\leq\bp$. The converse follows from a similar argument. 

Next we  show that  $ \mathcal{Z}(\alpha,\beta,A,F_A)\cap \mathcal{Z}(\ap,\bp,B,F_B) $ is equal to one of the sets in (1)-(4). If $\ap<\al$ and (necessarily) $\bp<\be$, then by Lemma \ref{lem:cilynderIntersections} we have $D_{(\al,A)F_{A}}\cap D_{(\ap,B)F_B}=D_{(\al,A)F_A}$ and $D_{(\be,A)F_A}\cap D_{(\bp,B)F_B}=D_{(\be,A)F_A}$, which implies that $\mathcal{Z}(\alpha,\beta,A,F_A)\subset  \mathcal{Z}(\ap,\bp,B,F_B)$ and gives (2). Similarly, if $\al<\ap$ and (necessarily) $\be<\bp$, then Lemma \ref{lem:cilynderIntersections} implies that $ \mathcal{Z}(\ap,\bp,B,F_B)\subset \mathcal{Z}(\alpha,\beta,A,F_A)$, which gives (3). Finally, if $\al=\ap$ and $\be=\bp$, then applying Lemma \ref{lem:cilynderIntersections} again gives (4). The union in (4) is clear from Lemma \ref{lem:cilynderIntersections}. In all other cases we have the empty set, which completes the proof.
\end{proof}

\begin{lema}\label{lem:thirdbasisfortop}
The collection 
$$ \mathcal{T}=\{\mathcal{Z}(\alpha,\beta,A,F_A)\mid (\alpha,A),(\beta,A)\in\mathfrak{p}, F_A\subseteq \varepsilon (A) \text{ is finite} \} $$
forms a basis for the topology on $\Gs$. Moreover, every  compact open set in $\Gs$ can be written as a disjoint union consisting of sets from $\mathcal{T}$. 
\end{lema}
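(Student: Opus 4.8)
The plan is to establish two things: first, that $\mathcal{T}$ is a basis for the topology on $\Gs$, and second, that every compact open set decomposes as a disjoint union of members of $\mathcal{T}$. For the basis claim, I would reduce it to the already-established basis $\mathscr{C}$ from Lemma \ref{lem:secondbasisfortop}. Since $\mathcal{T}\subseteq\mathscr{C}$ (each $\mathcal{Z}(\alpha,\beta,A,F_A)$ is by definition $\mathcal{Z}(D_{(\alpha,A)F},\alpha,A,\beta,D_{(\beta,A)F})$, and we already noted these sit in $\mathscr{C}$), every element of $\mathcal{T}$ is open. So it remains to show that for each basic set $\mathcal{Z}(U,\alpha,A,\beta,V)\in\mathscr{C}$ and each point $(x,|\alpha|-|\beta|,y)$ in it, there is a set from $\mathcal{T}$ squeezed between the point and $\mathcal{Z}(U,\alpha,A,\beta,V)$. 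Here $x=\alpha\xi$ and $y=\beta\xi$ for a common $\xi=\sigma^{|\alpha|}(x)=\sigma^{|\beta|}(y)\in D_{(A,A)}$. I would find a basic cylinder neighborhood $D_{(A,A),F}$ (or more precisely a cylinder of the form $D_{(\gamma,C)F}$ around $\xi$, with $C\subseteq A$) contained in $\sigma^{|\alpha|}(U)=\sigma^{|\beta|}(V)$; lifting this cylinder back through the injective maps $\sigma^{|\alpha|}|_U$ and $\sigma^{|\beta|}|_V$ produces a set of the form $\mathcal{Z}(\alpha\gamma,\beta\gamma,C,F_C)\in\mathcal{T}$ (absorbing any extra edge-data into the first coordinate) that contains the point and lies inside $\mathcal{Z}(U,\alpha,A,\beta,V)$.

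For the disjoint-union claim, I would proceed exactly as in Lemma \ref{lem:disjointunionofcylindersets}. Let $W\subseteq\Gs$ be compact open. Since $\mathcal{T}$ is a basis and the topology is second-countable, cover $W$ by members of $\mathcal{T}$ and extract a finite subcover $W=\bigcup_{i=1}^N\mathcal{Z}(\alpha_i,\beta_i,A_i,F_i)$ by compactness. Then I would invoke Lemma \ref{lem:basicneighbintersections} to handle overlaps: whenever two sets in the cover meet non-trivially, their intersection is one of the two sets (in which case the smaller is absorbed into the larger and can be discarded) or, in the case $\alpha_i=\alpha_j$ and $\beta_i=\beta_j$, the two sets can be merged via the union formula $\mathcal{Z}(\alpha,\beta,A,F_A)\cup\mathcal{Z}(\alpha,\beta,B,F_B)=\mathcal{Z}(\alpha,\beta,A\cup B,F_A\cap F_B)$ into a single member of $\mathcal{T}$. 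Repeatedly applying these reductions to all pairs with non-empty intersection terminates (the number of sets strictly decreases at each absorption/merge) and yields a pairwise-disjoint finite subfamily of $\mathcal{T}$ whose union is still $W$.

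The main obstacle I anticipate is the bookkeeping in the basis step: matching the abstract open sets $U,V$ from $\mathscr{C}$ with concrete cylinder data so that the lifted set genuinely has the prescribed form $\mathcal{Z}(\alpha',\beta',C,F_C)$ with \emph{the same generalized vertex} $C$ in both legs and the \emph{same} finite exceptional set $F_C$. One must use that $\sigma^{|\alpha|}(U)=\sigma^{|\beta|}(V)$ so that a single cylinder downstairs pulls back compatibly on both sides, and use Lemma \ref{lem:EqualShiftOnCylinders} to guarantee that the shifted cylinders on the two legs share a common range set; Lemma \ref{lem:sameunitsameMIE} ensures the minimal-infinite-emitter condition is preserved under changing the path prefix. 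Once the correct cylinder around $\xi$ is chosen, the verification that its preimages have the required form is routine, so the crux is really selecting that cylinder correctly. The disjointness step, by contrast, is a direct and essentially mechanical application of Lemma \ref{lem:basicneighbintersections}, mirroring the argument already given for Lemma \ref{lem:disjointunionofcylindersets}.
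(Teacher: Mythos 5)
Your proposal is correct and follows essentially the same route as the paper: the paper also reduces to $\mathscr{C}$ by writing $W=\sigma^{|\alpha|}(U)=\sigma^{|\beta|}(V)$ as a (finite) union of cylinders $D_{(\gamma_i,B_i)F_{B_i}}$ and lifting them through the injective shift maps to get $\mathcal{Z}(\alpha\gamma_i,\beta\gamma_i,B_i,F_{B_i})$, and then obtains the disjoint decomposition of compact open sets exactly as you describe, via Lemma \ref{lem:basicneighbintersections}. The only cosmetic difference is that you argue pointwise around $\xi$ while the paper covers all of $W$ at once (citing that the shift is a homeomorphism on cylinder sets to get that $W$ is compact open), but the ingredients and structure are the same.
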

\begin{proof}
We first show that $\mathcal{T}$ is basis by showing every element in $\mathscr{C}$  can be written as a union of elements from  $\mathcal{T}$. 

 Fix  $\mathcal{Z}(U,\alpha,A,\beta,V)\in\mathscr{C}$. Then $\sigma^{|\alpha|}(U)=\sigma^{|\beta|}(V)$, with  $U\subseteq D_{(\alpha,A)}$ and $V\subseteq D_{(\beta,A)}$. Let $W=\sigma^{|\al|}(U)=\sigma^{|\be|}(V)$. By \cite[Prop 3.16]{MR3938320} the shift map is a homeomorphism on basic open sets of $X$. Hence $W$ is compact and open, so we can write 
$$ W=\bigcup_{i=1}^{N}D_{(\gamma_i,B_i)F_{B_i}}.  $$
Since $\sigma^{|\al|}$ and $\sigma^{|\be|}$ are injective on $U$ and $V$, it follows that
\begin{equation}
\begin{split}
    & U=\bigcup_{i=1}^{N}D_{(\al\gamma_i,B_i)F_{B_i}} \hspace{0.4cm} (\text{with } s(\gamma_1)\in A), \text{ and }  \\
    & V=\bigcup_{i=1}^{N}D_{(\be\gamma_i,B_i)F_{B_i}}, \\
\end{split}
\end{equation}
for some $\gamma\in \GG^*$. Since $\sigma^{|\al|}(D_{(\al\gamma_i,B_i)F_{B_i}})=\sigma^{|\be|}(D_{(\al\gamma_j,B_j)F_{B_j}})$ if and only if $i=j$, and $\sigma^{|\al|}(D_{(\al\gamma_i,B_i)F_{B_i}})\cap \sigma^{|\be|}(D_{(\al\gamma_j,B_j)F_{B_j}})=\emptyset$ otherwise, it follows that 
$$\mathcal{Z}(U,\alpha,A,\beta,V)=\bigcup_{i=1}^{N} \mathcal{Z}(\alpha\gamma_i,\beta\gamma_i,B_i,F_{B_i})   .$$

Now, since $\mathcal{T}$ is a basis for the topology on $\Gs$, any compact open set can be written as a union of elements from $\mathcal{T}$, which in turn can be written as a disjoint union by applying  Lemma \ref{lem:basicneighbintersections} to non-empty intersections.
\end{proof}

The following lemma describes the bisections in $\Gs$.

\begin{lema}\label{lem:bisectioncharacterize}
Let $U\subset \Gs$ be a compact open bisection  with $s(U)=r(U)$. Then 
$$ U=\bigsqcup_{i=1}^{N} \mathcal{Z}(\al_i,\be_i,A_i,F_{A_i}), $$
where,  for $i=1,\ldots,N$, we have $A_i\in\GG^0$, $(\al_i,A_i),(\be_i,A_i)\in\mathfrak{p}$ and  $F_{A_i}\subset\varepsilon(A_i)$ finite. Moreover,
$$  s(U)=\bigsqcup_{i=1}^{N} D_{(\be_i,A_i)F_{A_i}}=\bigsqcup_{i=1}^{N} D_{(\al_i,A_i)F_{A_i}}=r(U).$$
 
\end{lema}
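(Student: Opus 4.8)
The plan is to read the decomposition straight off Lemma~\ref{lem:thirdbasisfortop} and then to compute the source and range of each basic piece, using the bisection hypothesis only at the very end.

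First I would invoke Lemma~\ref{lem:thirdbasisfortop}: since $U$ is compact and open, that lemma yields a decomposition
$$ U = \bigsqcup_{i=1}^{N} \mathcal{Z}(\al_i,\be_i,A_i,F_{A_i}) $$
as a disjoint union of elements of $\mathcal{T}$, with $N$ finite (compactness of $U$ together with openness of the pieces forces the disjoint union to be finite). By the parametrisation of $\mathcal{T}$, each piece already carries the data $A_i\in\GG^0$, $(\al_i,A_i),(\be_i,A_i)\in\mathfrak{p}$ and $F_{A_i}\subseteq\varepsilon(A_i)$ finite, so the first assertion of the statement is immediate.

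For the \emph{moreover} part I would compute $s$ and $r$ on a single basic set. Under the identification $\Gs^{(0)}\cong X$ the range map sends $(x,k,y)$ to $x$ and the source map sends it to $y$. Using the explicit form
$$ \mathcal{Z}(\al_i,\be_i,A_i,F_{A_i}) = \{(\al_i\xi,\,|\al_i|-|\be_i|,\,\be_i\xi) : \xi\in D_{(A_i,A_i)F_{A_i}}\}, $$
and letting $\xi$ range over $D_{(A_i,A_i)F_{A_i}}$, the first coordinates sweep out exactly $D_{(\al_i,A_i)F_{A_i}}$ while the third coordinates sweep out $D_{(\be_i,A_i)F_{A_i}}$. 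Hence $r(\mathcal{Z}(\al_i,\be_i,A_i,F_{A_i}))=D_{(\al_i,A_i)F_{A_i}}$ and $s(\mathcal{Z}(\al_i,\be_i,A_i,F_{A_i}))=D_{(\be_i,A_i)F_{A_i}}$.

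It remains to pass from the pieces to $U$ itself, and this is the one point that uses the bisection hypothesis. Since $U$ is a bisection, the restrictions $r|_U$ and $s|_U$ are injective; consequently the images under $r$ (resp.\ $s$) of the pairwise disjoint pieces $\mathcal{Z}(\al_i,\be_i,A_i,F_{A_i})$ remain pairwise disjoint, for a common image point would force two distinct arrows of $U$ lying in different pieces to share a range (resp.\ source). Therefore $r(U)=\bigsqcup_{i=1}^{N} D_{(\al_i,A_i)F_{A_i}}$ and $s(U)=\bigsqcup_{i=1}^{N} D_{(\be_i,A_i)F_{A_i}}$, and the standing hypothesis $s(U)=r(U)$ then closes the chain of equalities in the statement. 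I expect the only genuinely delicate step to be this transfer of disjointness, namely making precise that injectivity of $r|_U$ and $s|_U$ is exactly what guarantees both unions of cylinder sets are disjoint; the source and range computations on the basic sets are routine once the identification $\Gs^{(0)}\cong X$ is in place.
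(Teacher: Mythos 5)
Your proposal is correct and takes essentially the same approach as the paper: decompose $U$ via Lemma~\ref{lem:thirdbasisfortop}, read off the source and range of each piece from the explicit form of $\mathcal{Z}(\al_i,\be_i,A_i,F_{A_i})$, and use injectivity of $s|_U$ and $r|_U$ to conclude that the images of the disjoint pieces remain disjoint. The paper's proof is simply a terser statement of this same argument, including your key observation that the bisection property is what makes the source and range maps preserve disjoint unions.
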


\begin{proof}
Since $U$ is a compact open bisection, we can write 
$$ U=\bigsqcup_{i=1}^{N} \mathcal{Z}(\al_i,\be_i,A_i,F_{A_i}), $$
by Lemma \ref{lem:thirdbasisfortop}. That $s(U)=\sqcup_{i=1}^{N} D_{(\be_i,A_i)F_{A_i}} $ and $r(U)=\sqcup_{i=1}^{N} D_{(\al_i,A_i)F_{A_i}}$ follow from the definition of $\mathcal{Z}(\al_i,\be_i,A_i,F_{A_i})$ and the fact that the range and source maps restricted to $U$ are injective, and hence preserve disjoint unions.
\end{proof}

We now characterize  elements of the topological full group of the groupoid $\Gs$ associated with an ultragraph that satisfies Condition (L) (that is, $\Gs$ is effective by Propostion \ref{prop:effectiveCharac}).

\begin{proposicao}\label{prop:characbisecandfullgrp}
Let $\GG$ be an ultragraph with no sinks and that satisfies Conditions (RFUM) and Condition (L). If $\pi_U\in\llbracket \Gs\rrbracket$, then the full bisection $U\subseteq \Gs$ can be written as $$ U=\left( \bigsqcup_{i=1}^{N} \mathcal{Z}(\al_i,\be_i,A_i,F_{A_i}) \right) \bigsqcup \left(X\backslash \supp(\pi_U)  \right), $$ 
where,  for $i=1,\ldots,N$, we have $A_i\in\GG^0$, $(\al_i,A_i),(\be_i,A_i)\in\mathfrak{p}$ and  $F_{A_i}\subset\varepsilon(A_i)$ finite, and 
$$\supp(\pi_U)=\bigsqcup_{i=1}^{N} D_{(\be_i,A_i)F_{A_i}}=\bigsqcup_{i=1}^{N} D_{(\al_i,A_i)F_{A_i}}  .$$
The paths $\al_1,\dots,\al_N$ are pairwise disjoint, as are the paths $\be_1,\dots,\be_N$, and $\al_i$ and $\be_i$ are distinct for each $0\leq i\leq N$. The homeomorphism $\pi_U:s(U)\to r(U)$ is given by $\pi_U(\be_i x)=\al_i x$ for $\be_i x\in D_{(\be_i,A_i)F_{A_i}}$ and the identity otherwise.
\end{proposicao}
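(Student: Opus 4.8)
The plan is to assemble the statement from two results already in hand: the abstract decomposition of Lemma~\ref{bisectiondecomposition}, which peels off the part of a full bisection lying in the unit space, and the concrete description of compact open bisections with equal source and range from Lemma~\ref{lem:bisectioncharacterize}. Since $\Gs$ is effective by Proposition~\ref{prop:effectiveCharac} (using Condition~(L)), Lemma~\ref{bisectiondecomposition} applies to $\pi_U \in \llbracket \Gs \rrbracket$; identifying $\Gs^{(0)}$ with $X$ it gives $U = U^\bot \sqcup (X \setminus \supp(\pi_U))$ with $U^\bot$ a compact open bisection satisfying $s(U^\bot) = r(U^\bot) = \supp(\pi_U)$. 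Lemma~\ref{lem:bisectioncharacterize} then writes $U^\bot = \bigsqcup_{i=1}^N \mathcal{Z}(\al_i,\be_i,A_i,F_{A_i})$ and simultaneously gives the two disjoint-union descriptions of $\supp(\pi_U)$ as $\bigsqcup_i D_{(\be_i,A_i)F_{A_i}}$ and $\bigsqcup_i D_{(\al_i,A_i)F_{A_i}}$. The formula $\pi_U(\be_i x) = \al_i x$ on $D_{(\be_i,A_i)F_{A_i}}$, with the identity off $\supp(\pi_U)$, is then read directly off the definition of $\mathcal{Z}(\al_i,\be_i,A_i,F_{A_i})$ as $\{(\al_i\xi,|\al_i|-|\be_i|,\be_i\xi)\}$ and the fact that $r$ and $s$ select the first and third coordinates.

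It remains to normalize the indexing data. For $\al_i \neq \be_i$: an equality $\al_i = \be_i$ forces $|\al_i| = |\be_i|$, so $\mathcal{Z}(\al_i,\al_i,A_i,F_{A_i})$ consists of units $(x,0,x) \in \Gs^{(0)}$ and fixes the open set $D_{(\be_i,A_i)F_{A_i}}$ pointwise; since $\supp(\pi_U)$ is regular closed it can contain no open set of fixed points, so this term is absorbed into $X\setminus\supp(\pi_U)$ and deleted. For pairwise disjointness of $\al_1,\dots,\al_N$ (the $\be_i$ being symmetric), I would compare two range paths: as the range cylinders are already disjoint, two paths can fail to be disjoint only by being comparable. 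When $\al_i = \al_j$ the union identity of Lemma~\ref{lem:basicneighbintersections} merges the two terms into $\mathcal{Z}(\al_i,\be_i,A_i\cup A_j,F_{A_i}\cap F_{A_j})$ (the bisection property forcing the source data to agree), strictly reducing $N$, and iterating removes all equalities.

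The step I expect to be the main obstacle is the case of a proper comparability $\al_i < \al_j$. This can arise exactly when a nonempty forbidden set $F_{A_i}$ keeps the two range cylinders disjoint by blocking the continuation edge $(\al_j)_{|\al_i|+1}$, and it is not removable by the merging above, since the prefix-replacement rules $\be_i\to\al_i$ and $\be_j\to\al_j$ may have different degrees $|\al_i|-|\be_i| \neq |\al_j|-|\be_j|$. The crux is therefore to show that the full-group constraints $s(U^\bot) = r(U^\bot) = \supp(\pi_U)$ let one re-choose the decomposition so that every such proper comparability disappears while $N$ remains finite --- equivalently, that the coarsest partition of $\supp(\pi_U)$ on which $\pi_U$ acts by a single prefix replacement already has pairwise disjoint range prefixes. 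I would focus the technical effort here, tracking the source and range partitions simultaneously and using minimality of the infinite emitters $A_i$ (Lemma~\ref{lem:sameunitsameMIE}) to control how the finite sets $F_{A_i}$ interact with the extensions; once disjointness is secured on both sides, the proposition follows by collecting the pieces.
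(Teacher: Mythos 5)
Your first paragraph is precisely the paper's proof: the authors' entire argument for this proposition is the one-line citation that it ``follows directly from Lemma~\ref{lem:bisectioncharacterize} and Lemma~\ref{bisectiondecomposition}'' (with effectiveness coming from Condition~(L) via Proposition~\ref{prop:effectiveCharac}, exactly as you set it up). The normalization issues you then raise --- ruling out $\al_i=\be_i$ and establishing pairwise disjointness of the $\al_i$ and of the $\be_i$ --- are not treated in the paper at all; your regular-closedness argument for $\al_i\neq\be_i$ and your merging of equal prefixes via Lemma~\ref{lem:basicneighbintersections} are sound, and the proper-comparability case $\al_i<\al_j$ that you single out as the remaining crux is likewise left implicit by the authors, so on that point your proposal goes beyond, rather than falls short of, the published argument.
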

\begin{proof}
The proof follows directly from Lemma \ref{lem:bisectioncharacterize} and Lemma \ref{bisectiondecomposition}.
\end{proof}

\section{Equivalence of groupoid and topological full group isomorphisms} \label{sec:MainResults}
 In this section we prove our main results, Theorem \ref{thm:MainIsoUGs} and Theorem \ref{main2}, which give conditions under which ultragraph groupoids are isomorphic if and only if their topological full groups are (algebraically) isomorphic. These generalize \cite[Theorem 10.10 and Theorem 10.11]{MR3950815} from graphs to the ultragraphs. The techniques and ideas in this section are based on \cite{MR3950815}, but adapted to ultragraphs. Throughout we highlight some of the subtle differences between the graph and ultragraph case.

We begin by defining three conditions for ultragraphs that generalize that of \cite[Definition 10.1]{MR3950815}. Fix an ultragraph $\GG$ with no sinks and that satisfies Condition (RFUM). If $A,B\in\GG^0$, then we let
$$ A\mathfrak{p} B:=\{(\al,C)\in\mathfrak{p}:|\al|\geq 1, s(\al,C)\in A, B\subseteq C\}.  $$ 
\begin{itemize}
    \item $\GG$ satisfies Condition (K) if for every $v\in G^0$, there is either no simple loop based at $v$ or at least two simple loops based at $v$.
    \item $\GG$ satisfies Condition (W) if for every wandering path $\al=\al_1\ldots\in\mathfrak{p}^{\infty}$,  we have that, for some $i\in\N$,
    \begin{equation}\label{conditionW}
        |\{(\be,C)\in s(\al)\mathfrak{p} s(\al_{i+1})\mid \be\neq\al_1\ldots\al_i\}|\geq 1.
    \end{equation}     
    \item $\GG$ satisfies Condition ($\infty$) if for every minimal infinite emitter $A\in\GG^0$ we have that 
    $$|\{e\in \varepsilon(A):r(e) \mathfrak{p} A\neq \emptyset\}|=\infty .$$
\end{itemize}

\begin{remark}
We remark on some subtleties in conditions (K),(W) and ($\infty$) for ultragraphs when compared with graphs. Firstly, in contrast to graphs, a simple loop $\al_1,\ldots,\al_n$ in an ultragraph may have $r(\al_i)\cap r(\al_j)\neq\emptyset$. Secondly, Condition (W) is intended to provide us with an arbitrary number of disjoint paths on a wandering path with no cycles. For this the condition that $\be\neq\al_1\ldots\al_i$ in (\ref{conditionW}) is crucial, because edge ranges can be sets. For example,  if $r(\al_n)=\{s(\al_{n+1}),v_1,v_2\}$ then $(\al_1\ldots\al_n,\{s(\al_{n+1}),v_1\})$ and $(\al_1\ldots\al_n,\{s(\al_{n+1}),v_2\})$ are two distinct ultrapaths in $s(\al)\mathfrak{p} s(\al_{i+1})$, but not disjoint as required later on. So, unlike graphs the cardinality of the set in (\ref{conditionW}) may be be infinite, but with only one path (as opposed to an ultrapath).  
\end{remark}


Conditions (K) and (W) are intended to provide us with an arbitrary number of disjoint path for a given path. This will be crucial in Theorem \ref{thm:main1KWInf}. In the following lemmas we illustrate this fact by considering loops and wandering paths separately, and then combining these results in Lemma~\ref{lem:infdisjointpaths}.

If there are two distinct simple loops based at a vertex in a graph, then these loops are necessarily disjoint. This is not the case for ultragraphs. For example, if $G^0=\{v_1,v_2,v_3\}$, $\GG^1=\{e_1,e_2\}$ with $s(e_i)=v_i$, $r(e_1)=\{v_1, v_2\}$ and $r(e_2)=\{v_1,v_3\}$. Then $e_1$ and $e_1e_2$ are two distinct simple loops based at $v_1$, but they are not disjoint. However, we can still find infinitely many disjoint loops as the following lemma illustrates.

\begin{lema}\label{lem:CondkInfLoops}
Suppose $\GG$ satisfies Condition (K). If $\rho$ is a loop, then there are infinitely many loops $\al_1,\ldots$ based at $s(\rho)$ such that $\rho,\al_1,\ldots$ are all pairwise disjoint and $r(\rho)\cap r(\al_i)\neq \emptyset$ for $i=1,2,\ldots$.
\end{lema}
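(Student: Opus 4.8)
The plan is to reduce everything to two distinct simple loops at the base vertex and then build the family by a power construction. Write $v=s(\rho)$. Since $\rho$ is a loop at $v$, it already contains a simple loop based at $v$: taking $j$ to be the largest index with $s(\rho_j)=v$, the subpath $\rho_j\cdots\rho_{|\rho|}$ is a simple loop at $v$, because it returns to $v$ (as $v\in r(\rho)=r(\rho_{|\rho|})$) and, by maximality of $j$, does not revisit $v$ as a source. Hence there is at least one simple loop based at $v$, so Condition (K) supplies a second one; call the two distinct simple loops $\mu\neq\nu$. Among any two distinct finite paths at least one is not a \emph{proper} initial segment of the other (by length, they cannot both be), so after relabelling I may assume $\mu\not<\nu$.

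I would then set $\al_n:=\nu^n\mu$ for $n\geq 1$. Each $\al_n$ is a genuine loop based at $v$: the concatenations are legal since $\mu,\nu$ are loops at $v$ (so $v\in r(\mu)\cap r(\nu)$), and $s(\al_n)=s(\nu)=v$ with $v\in r(\al_n)=r(\mu)$. The range condition then needs no work at all: since $\al_n$ is a loop based at $v$ we have $v\in r(\al_n)$, and since $\rho$ is a loop at $v$ we have $v\in r(\rho)$, whence $v\in r(\rho)\cap r(\al_n)\neq\emptyset$.

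The heart of the argument, and the place where ultragraphs genuinely differ from graphs, is pairwise disjointness. In a graph two distinct simple loops based at $v$ are automatically disjoint (a loop's range there is exactly $\{v\}$), but in an ultragraph a loop's range can be larger, so $\mu$ and $\nu$ may be nested — this is precisely the phenomenon in the example preceding the lemma. To show $\al_n$ and $\al_m$ are disjoint for $n<m$ it suffices to rule out $\al_n<\al_m$, i.e. to show $\mu$ is not an initial segment of $\nu^{\,m-n}\mu$, a word beginning with $\nu$. If $|\mu|\le|\nu|$, this would force $\mu\le\nu$, contradicting $\mu\not<\nu$ and $\mu\neq\nu$. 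If $|\mu|>|\nu|$, it forces $\nu<\mu$, say $\mu=\nu\delta$; here simplicity of $\mu$ is the crucial point, since the edge $\delta_1$ occupies position $|\nu|+1\ge 2$ of the simple loop $\mu$ and so has source $\neq v$, whereas the edge of $\nu^{\,m-n}\mu$ in that slot has source $v$. Thus $\mu$ and $\nu^{\,m-n}\mu$ diverge strictly inside the length of $\mu$, excluding $\mu<\nu^{\,m-n}\mu$; the reverse containment fails for length reasons. Hence the $\al_n$ are pairwise disjoint.

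Finally I would fold $\rho$ into the family. Since $|\al_n|=n|\nu|+|\mu|\to\infty$, for all large $n$ we have $|\al_n|>|\rho|$, so $\al_n\not<\rho$ and $\al_n\neq\rho$; the only remaining failure mode is $\rho<\al_n$, i.e. $\rho$ being an initial segment of $\nu^n\mu$. I expect this to be the genuine obstacle: if $\rho$ is a prefix of $\nu$ (equivalently, $\rho$ is a common initial segment of both simple loops), then $\rho<\al_n$ for every $n$, so the naive family never works. The fix is to exploit the freedom in choosing which loop is repeated — using that $\nu^\infty$ and $\mu^\infty$ agree only up to length $\min(|\mu|,|\nu|)$ — to arrange that $\rho$ is not an initial segment of the infinite power of the repeated loop, together with the exit guaranteed by Condition (K) (which forces Condition (L)); one then discards the finitely many exceptional indices. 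With that arranged, $\rho,\al_1,\al_2,\dots$ are pairwise disjoint loops based at $v$ whose ranges all meet $r(\rho)$, as required.
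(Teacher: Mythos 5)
Your construction and verification of the family $\{\al_n\}$ is correct, and it is essentially the paper's own argument: the paper likewise extracts two distinct simple loops $\tau_1,\tau_2$ based at $s(\rho)$ from Condition~(K) (using, implicitly, the fact you spell out, that $\rho$ contains a simple loop based at $s(\rho)$) and then takes powers $\tau_2^n\tau_1$. The differences are organizational and mostly in your favor. The paper normalizes by letting $\tau_2$ have minimal length and splits into two cases ($\tau_1,\tau_2$ disjoint, or $\tau_1=\tau_2\beta$), with the nested case only sketched (``repeating the argument above\dots''); your normalization $\mu\not<\nu$, combined with the observation that the edge in slot $|\nu|+1$ of the simple loop $\mu$ has source different from $v$ while the corresponding edge of $\nu^{m-n}\mu$ has source $v$, handles both cases at once and is complete. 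Note also that both constructions give the stronger range property $s(\rho)\in r(\al_n)$, which is exactly how the paper obtains $r(\rho)\cap r(\al_n)\neq\emptyset$ as well.

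The one step you could not finish --- arranging that the $\al_n$ be disjoint from $\rho$ itself --- is not a gap you could have closed: read literally, that part of the statement is false, and the paper's own proof never addresses it (after the first sentence, $\rho$ enters only through $s(\rho)$, and only pairwise disjointness of the $\al_i$ among themselves is argued). Concretely, let $G^0=\{v,w\}$ and $\GG^1=\{e,f\}$ with $s(e)=v$, $s(f)=w$ and $r(e)=r(f)=\{v,w\}$. This ultragraph has no sinks, satisfies (RFUM), and satisfies Condition~(K): the simple loops based at $v$ are $e$ and $ef$, and those based at $w$ are $f$ and $fe$. Take $\rho=e$. Since $e$ is the only edge emitted by $v$, every loop based at $v$ either equals $e$ or has $e$ as a proper initial segment, so no loop based at $v$ is disjoint from $\rho$. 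In particular your proposed repair (choosing which loop to repeat so that $\rho$ is not a prefix of its infinite power) necessarily fails here, because $\rho$ is a prefix of every path leaving $v$. What is true, what the paper actually proves, and what is used downstream (e.g.\ in Lemma~\ref{lem:infdisjointpaths}, where the disjointness ultimately needed is that of the associated ultrapaths) is the weaker assertion: there are infinitely many pairwise disjoint loops $\al_1,\al_2,\dots$ based at $s(\rho)$ with $s(\rho)\in r(\al_i)$ for every $i$. For that statement your argument is complete, and tighter than the paper's.
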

\begin{proof}
Since $\rho$ is a loop Condition (K) implies that there are at least two distinct simple loops $\tau_1$ and $\tau_2$ based at $s(\rho)$ (but possibly not disjoint). 

Suppose $\tau_2$ has minimal length between all simple loops based on $s(\rho)$. If $\tau_1$ and $\tau_2$ are disjoint, then $\tau_1, \tau_2^{}\tau_1,\tau_2^{2}\tau_1, \ldots$ are disjoint and $s(\tau_1)=s(\tau_2)\in r(\tau_1)=r(\tau_2^{n}\tau_1)$ for every $n\in\N$. 

Otherwise, $\tau_1=\tau_2\beta$ for some $\beta\in\GG^*$. We claim that $\tau_2\tau_2\beta$ and $\tau_2\beta$ are disjoint.  If $\tau_2\beta$ is a initial segment of $\tau_2\tau_2\beta$, then $\beta$ is a initial segment of $\tau_2\beta$. In this case $s(\beta)=s(\tau_2)=s(\rho)$, and since $r(\beta)=r(\tau_1)$, $\beta$ would be a loop based on $s(\rho)$. Since $\tau_2$ has minimal length, we have that $|\tau_2|\leq|\beta|$, which would imply that $\beta=\tau_2\alpha$  for some $\alpha\in\GG^*$ and that $\tau_1=\tau_2\tau_2\beta$, which contradicts the fact that $\tau_1$ is a simple loop. Now, repeating the argument above for  $\tau_2\tau_2\alpha$ and $\tau_2\beta$ gives the required result.
\end{proof}

In the graph case, if $\alpha$, $\beta$ and $\gamma$ are paths such that $\alpha=\beta\gamma$ and $r(\beta)=r(\alpha)=s(\gamma)$, then $\gamma$ is necessarily a loop. For ultragraphs, if we replace the equality with $s(\gamma)\in r(\al)\cap r(\be)$, then $\gamma$ is not necessarily a loop. For example, say $r(\alpha)=\{v,w\}$ and $\gamma$ is a path of length one such that $s(\gamma)=v$ and $r(\gamma)=\{w\}$. The proof of the following lemma has to take this into account and is different from \cite[Lemma~10.2(2)]{MR3950815}.

\begin{lema}\label{lem:Winfdisjointpaths}
Let $\GG$ be an ultragraph that satisfies condition (W) and $\al$ a wandering path such that it has no loop based at $s(\alpha_i)$ for any $i$. Then for any given $N\in\N$ there exists $n\in\N$ and $N+1$ disjoint ultrapaths in $s(\al)\mathfrak{p}s(\al_{n+1})$, one of which is an initial segment of $\al$.
\end{lema}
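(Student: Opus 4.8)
The plan is to reduce the statement to a purely combinatorial problem about \emph{distinct} finite paths, and then to produce those paths by iterating Condition (W) along $\al$.

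First I would fix the range of every ultrapath I build to be the single vertex $\{s(\al_{n+1})\}$. With this uniform choice, an ultrapath in $s(\al)\mathfrak{p}s(\al_{n+1})$ is the same datum as a finite path $\be$ with $s(\be)=s(\al_1)$ and $s(\al_{n+1})\in r(\be)$, and most of Definition~\ref{dfn:ultraDisjoint} collapses. The key observation, and the place where the no-loop hypothesis enters, is this: if $\be\neq\be'$ are two such paths (both from $s(\al_1)$, both with $v:=s(\al_{n+1})\in r(\cdot)$), then $(\be,\{v\})$ and $(\be',\{v\})$ are disjoint. Indeed, if they are not disjoint as paths, one is a proper initial segment of the other, say $\be'=\be\gamma$ with $|\gamma|\geq 1$; by Definition~\ref{dfn:ultraDisjoint} the ultrapaths fail to be disjoint only if $s(\gamma)=v$, but then $v=s(\gamma)\in r(\gamma)=r(\be')$ makes $\gamma$ a loop based at $v=s(\al_{n+1})$, contradicting that $\al$ has no loop based at any $s(\al_i)$. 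Hence, once all ranges are single vertices, \emph{distinct} paths automatically yield \emph{disjoint} ultrapaths, and it suffices to produce $N+1$ distinct paths from $s(\al_1)$ whose ranges contain a common $s(\al_{n+1})$, one of which is an initial segment of $\al$.

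I would build these paths by induction on their number, with hypothesis at stage $k$: there is an index $n_k$ and $k+1$ pairwise disjoint ultrapaths $(p^{(k)}_0,\{s(\al_{n_k+1})\}),\dots,(p^{(k)}_k,\{s(\al_{n_k+1})\})$ with $p^{(k)}_0=\al_1\cdots\al_{n_k}$ an initial segment of $\al$; the base $k=0$ is the single ultrapath $(\al_1,\{s(\al_2)\})$. For the step I would apply Condition (W) to the shifted tail $\sigma^{n_k}(\al)=\al_{n_k+1}\al_{n_k+2}\cdots$, which is again wandering and still has no loop based at any of its source vertices. This gives an index $i\geq 1$ and a path $\mu$ with $s(\mu)=s(\al_{n_k+1})$, $s(\al_{n_k+i+1})\in r(\mu)$ and $\mu\neq\al_{n_k+1}\cdots\al_{n_k+i}$. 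Setting $n_{k+1}=n_k+i$ and $w=\al_{n_k+1}\cdots\al_{n_{k+1}}$, I extend each old path to $p^{(k)}_j w$ (legitimate since $s(\al_{n_k+1})\in r(p^{(k)}_j)$, moving every endpoint to $s(\al_{n_{k+1}+1})$) and adjoin the new deviation $p^{(k)}_0\mu=\al_1\cdots\al_{n_k}\mu$; note $p^{(k)}_0 w=\al_1\cdots\al_{n_{k+1}}$ remains an initial segment of $\al$. Preservation of disjointness is then routine: appending the common suffix $w$ keeps the old ultrapaths pairwise disjoint, since a divergence between two paths (or a split at differing source vertices in the initial-segment case) persists under concatenation; the deviation $\al_1\cdots\al_{n_k}\mu$ is disjoint from the extended initial segment $\al_1\cdots\al_{n_k}w$ by the observation above (cancel the common prefix $\al_1\cdots\al_{n_k}$, use $\mu\neq w$ and that both ranges contain $v=s(\al_{n_{k+1}+1})$); and it is disjoint from each other extended old path because those already diverged from the initial segment, or split from $\mu$ at the first edge, whose source $s(\mu)=s(\al_{n_k+1})$ differs from the relevant divergent source. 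After $N$ steps I take $n=n_N$.

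I expect the main obstacle to be precisely the feature stressed in the surrounding remarks: because edge ranges are sets, Condition (W) only supplies a \emph{different} path, not a disjoint one, and distinct paths can fail to be disjoint as ultrapaths (in contrast to the graph case and to Lemma~\ref{lem:CondkInfLoops}). The device overcoming this is the uniform single-vertex range together with the no-loop hypothesis, which is exactly what upgrades ``different'' to ``disjoint.'' The only other point needing care is that each application of (W) can be taken with $i\geq 1$ (for $i=0$ the condition is vacuous), so that the indices $n_k$ strictly increase and the extensions by $w$ are genuine.
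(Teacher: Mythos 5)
Your proposal is correct and takes essentially the same route as the paper's proof: the same induction on the number of ultrapaths, the same application of Condition (W) to the shifted tail $\al_{n_k+1}\al_{n_k+2}\ldots$ (still wandering, still loop-free), the same construction of appending the common segment of $\al$ to the old ultrapaths while adjoining the deviation, and the same use of the no-loop hypothesis to rule out problematic initial-segment configurations. Your ``key observation'' (distinct paths with a common singleton range at a vertex $s(\al_i)$ are automatically disjoint ultrapaths) simply packages into one lemma the work that the paper distributes over its six-case disjointness check, so the difference is organizational rather than conceptual.
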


\begin{proof}
    We prove by induction on $N$. If $N=0$, we can choose $n=1$, since $(\alpha_1,r(\alpha_1))\in s(\al)\mathfrak{p}s(\al_2)$. Fix $N\in\N$, and suppose that $n\in\N$ is such that there exist $N+1$ disjoint paths $(\beta^{(1)},B_1),\ldots,(\beta^{(N+1)},B_{N+1})\in s(\al)\mathfrak{p}s(\al_{n+1})$. We can assume without loss of generality that $B_1=\cdots=B_{N+1}=\{s(\alpha_{n+1})\}$ and that $s(\al_m)\neq s(\al_{n+1})$ for all $m>n+1$, the latter because the path is wandering. Since $\alpha_{n+1}\alpha_{n+2}\ldots$ is also wandering, by condition (W), there exists $m>n+1$ such that $|\{(\gamma,A)\in s(\al_{n+1})\mathfrak{p} s(\al_{m+1})\mid \gamma \neq\al_{n+1}\ldots\al_m\}|\geq 1$. Let $\gamma\neq \alpha_{n+1}\ldots\alpha_m$ be such that $(\gamma,A)\in s(\al_{n+1})\mathfrak{p} s(\al_{m+1})$ for some $A\in\GG^{0}$. Again, we may assume without loss of generality that $A=\{s(\al_{m+1})\}$. We claim that the ultrapaths $(\beta^{(1)}\alpha_{n+1}\ldots\alpha_m,A),\ldots,(\beta^{(N+1)}\alpha_{n+1}\ldots\alpha_m,A),(\alpha_1\ldots\alpha_n\gamma,A)$ are mutually disjoint in $s(\al)\mathfrak{p}s(\al_{m+1})$. We start by noticing, that for $i,j\in\{1,\ldots,N+1\}$ with $i\neq j$,  $(\beta^{(i)}\alpha_{n+1}\ldots\alpha_m,A)$ and $(\beta^{(j)}\alpha_{n+1}\ldots\alpha_m,A)$ are still disjoint. 
    
    We now prove that $(\beta^{(i)}\alpha_{n+1}\ldots\alpha_m,A)$ and $(\alpha_1\ldots\alpha_n\gamma,A)$ are disjoint for any $i=1, \ldots, N+1$. We divide in six cases:
    \begin{itemize}
        \item If $\beta^{(i)}$ is disjoint from $\alpha_1,\ldots,\alpha_n$, then $\beta^{(i)}\alpha_{n+1}\ldots\alpha_m$ is also disjoint from $\alpha_1\ldots\alpha_n\gamma$.
        \item If $\beta^{(i)}=\alpha_1\ldots\alpha_k$ for some $k<n$, then $s(\alpha_{k+1})\neq s(\alpha_{n+1})$, otherwise $\alpha_{k+1}\ldots\alpha_n$ is a loop based on $s(\alpha_{k+1})$. In this case $\beta^{(i)}\alpha_{n+1}\ldots\alpha_m$ is disjoint from $\alpha_1\ldots\alpha_n\gamma$ because the $(k+1)$-coordinates are different.
        \item If $\beta^{(i)}=\alpha_1\ldots\alpha_n\beta'$ for some $\beta'$ with $|\beta'|>0$, then $s(\beta')\neq s(\alpha_{n+1})$, otherwise $\beta'$ is a loop based on $s(\alpha_{n+1})$. In this case $\beta^{(i)}\alpha_{n+1}\ldots\alpha_m$ is disjoint from $\alpha_1\ldots\alpha_n\gamma$ because $s(\gamma)=s(\alpha_{n+1})$ so that $\beta'_1\neq\gamma_1$.
        \item If $\beta^{(i)}=\alpha_1\ldots\alpha_n$ and $\alpha_{n+1}\ldots\alpha_n$ and $\gamma$ are disjoint, then $\beta^{(i)}\alpha_{n+1}\ldots\alpha_m$ is also disjoint from $\alpha_1\ldots\alpha_n\gamma$.
        \item Suppose that $\beta^{(i)}=\alpha_1\ldots\alpha_n$ and $\gamma=\alpha_{(n+1)}\ldots\alpha_{m}\gamma'$ for some $\gamma'$. Notice that $s(\gamma')\neq s(\alpha_{m+1})$, because otherwise $\gamma'$ is a loop based at $s(\alpha_{m+1})$ since $s(\alpha_{m+1})\in r(\gamma)=r(\gamma')$. This implies that $(\beta^{(i)}\alpha_{n+1}\ldots\alpha_m,A)$ is disjoint from $(\alpha_1\ldots\alpha_n\gamma,A)$.
        \item Suppose that $\beta^{(i)}=\alpha_1\ldots\alpha_n$ and $\gamma=\alpha_{n+1}\ldots\alpha_{k}$ for some $n+1\leq k < m$. As in the last case, we must have $s(\alpha_{k+1})\neq s(\alpha_{m+1})$, otherwise $\alpha_{k+1}\ldots\alpha_m$ is a loop based on $s(\alpha_{k+1})$. Again, this implies that $(\beta^{(i)}\alpha_{n+1}\ldots\alpha_m,A)$ is disjoint from $(\alpha_1\ldots\alpha_n\gamma,A)$.
    \end{itemize}
    Hence there are $N+2$ mutually disjoint ultrapaths in $s(\al)\mathfrak{p}s(\al_{m+1})$.
    
    The last part follows from the induction process.
\end{proof}

 The following lemma provide us with four disjoint paths when Conditions (K) and (W) are satisfied (the proof could also be easily modified to provide us with any desired number of disjoint paths, but we will only need four).

\begin{lema} \label{lem:infdisjointpaths}
    Suppose that $\GG$ satisfies conditions (K) and (W). Then for any $x\in\mathfrak{p}^{\infty}$ and any $n\in\N$, there exist $m>n$ and four paths $\alpha_1=x_{n+1}\ldots x_m$, $\alpha_2$, $\alpha_3$ and $\alpha_4$ such that $s(x_{m+1})\in r(\alpha_i)$ and $s(\al_i)=s(x_{n+1})$ for all $i$, and the ultrapaths $(x_1\ldots x_n\alpha_i,\{s(x_{m+1})\})$, $i=1,\ldots,4$ are mutually disjoint.
\end{lema}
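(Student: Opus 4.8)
The plan is to fix $x\in\mathfrak{p}^{\infty}$ and $n\in\N$ and split into three cases according to the recurrence and loop structure of the tail $x_{n+1}x_{n+2}\ldots$. In every case the path $\alpha_1$ is forced to equal $x_{n+1}\ldots x_m$, so the work is to produce $\alpha_2,\alpha_3,\alpha_4$ and to choose $m$ so that the four ultrapaths $(x_1\ldots x_n\alpha_i,\{s(x_{m+1})\})$ are mutually disjoint. Throughout I use that disjointness of the underlying paths $\alpha_i$ (after the common prefix $x_1\ldots x_n$) forces disjointness of the ultrapaths by the first clause of Definition \ref{dfn:ultraDisjoint}, and that when two $\alpha_i$ share a prefix one may fall back on the initial-segment clauses of that definition. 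Note also that prepending the common prefix $x_1\ldots x_n$ always preserves disjointness, so it suffices to argue about the $\alpha_i$ themselves.

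First suppose the tail is \textbf{not wandering}, so some vertex $v$ has $s(x_j)=v$ for infinitely many $j>n$. Choose two such indices $j_0<m+1$ and set $q'=x_{j_0}\ldots x_m$ and $p=x_{n+1}\ldots x_{j_0-1}$; then $q'$ is a loop based at $v$ with $v\in r(q')$ and $s(x_{m+1})=v$. Applying Lemma \ref{lem:CondkInfLoops} with $\rho=q'$ yields pairwise disjoint loops $q',\lambda_1,\lambda_2,\lambda_3$ based at $v$. Put $\alpha_1=pq'=x_{n+1}\ldots x_m$ and $\alpha_{i+1}=p\lambda_i$; each has source $s(x_{n+1})$ and range containing $v=s(x_{m+1})$, and after the common prefix $p$ their continuations $q',\lambda_1,\lambda_2,\lambda_3$ are pairwise disjoint, so the $\alpha_i$ are pairwise disjoint paths.

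Next suppose the tail is \textbf{wandering but some $s(x_{j_0})$ with $j_0>n$ carries a loop}. Here I cannot force the $\alpha_i$ to return to $v=s(x_{j_0})$, so instead I insert loops and then rejoin $x$: by Lemma \ref{lem:CondkInfLoops} fix pairwise disjoint loops $\lambda_1,\lambda_2,\lambda_3$ based at $v$, choose $m$ so large that $|q|>\max_i|\lambda_i|$ for $q=x_{j_0}\ldots x_m$, and set $\alpha_1=pq$ and $\alpha_{i+1}=p\lambda_i q$ with $p=x_{n+1}\ldots x_{j_0-1}$. All four have range $r(x_m)\ni s(x_{m+1})$, and $\lambda_iq,\lambda_kq$ are disjoint because $\lambda_i,\lambda_k$ are. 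The only delicate point, and the \emph{main obstacle} of the whole proof, is the disjointness of $q$ and $\lambda_iq$: if it failed then $q$ (the shorter) would be an initial segment of $\lambda_iq$, and because $|q|>|\lambda_i|$ this forces $\lambda_i$ to be a prefix of $q$ together with the periodicity relation $q_s=q_{s+|\lambda_i|}$; in particular $s(x_{j_0})=s(x_{j_0+|\lambda_i|})=v$, so $v$ repeats along the tail, contradicting that it is wandering. This contradiction is exactly where choosing $q$ longer than the loops is essential, and it is what distinguishes this case from the recurrent one, where a periodic segment of $x$ could legitimately align with a loop.

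Finally, if the tail is \textbf{wandering and no $s(x_j)$ with $j>n$ carries a loop}, then the tail is a wandering path having no loop based at any of its vertices, so Lemma \ref{lem:Winfdisjointpaths} applies with $N=3$: it produces $k\in\N$ and four mutually disjoint ultrapaths in $s(x_{n+1})\mathfrak{p}s(x_{n+1+k})$, one of which is the initial segment $x_{n+1}\ldots x_{n+k}$ of the tail. Taking $m=n+k$, relabelling so that this initial segment is $\alpha_1$, and restricting each range to the single vertex $\{s(x_{m+1})\}$ (which lies in every one of the given ranges, so the restriction preserves disjointness) gives the required $\alpha_1,\ldots,\alpha_4$. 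Since the three cases are exhaustive, this completes the argument.
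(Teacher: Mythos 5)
Your cases 1 and 3 are sound: case 1 correctly reduces the non-wandering situation to Lemma \ref{lem:CondkInfLoops} (using that a loop based at $v$ automatically has $v$ in its range), and case 3 is exactly the paper's appeal to Lemma \ref{lem:Winfdisjointpaths}. The genuine gap is in your case 2, at precisely the step you flag as the main obstacle. From ``$q$ is an initial segment of $\lambda_i q$'' you correctly deduce that $\lambda_i$ is a prefix of $q$ and that $s(x_{j_0})=s(x_{j_0+|\lambda_i|})=v$, but the conclusion ``so $v$ repeats along the tail, contradicting that it is wandering'' is wrong: wandering only forbids a vertex from occurring \emph{infinitely} often as a source, so finitely many repetitions of $v$ are perfectly compatible with wandering. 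Concretely, suppose the tail after the prefix $p$ is $\lambda_1\lambda_1\lambda_1\delta$, where $\lambda_1$ is one of the loops produced by Lemma \ref{lem:CondkInfLoops} and $\delta$ wanders off never returning to any vertex of $\lambda_1$. This path visits $v$ only three times, so it is wandering and falls under your case 2; your prescription permits $q=\lambda_1\lambda_1$ (which can satisfy $|q|>\max_i|\lambda_i|$), and then $q$ \emph{is} an initial segment of $\lambda_1q$. Moreover $s(x_{m+1})=v=s(\lambda_1)$ here, so by the third clause of Definition \ref{dfn:ultraDisjoint} the ultrapaths $(x_1\ldots x_n pq,\{v\})$ and $(x_1\ldots x_n p\lambda_1 q,\{v\})$ are not disjoint, and your construction fails.

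The gap is fixable. One repair keeps your case split but chooses $m$ larger: since the tail is wandering, $j_1:=\max\{j>n : s(x_j)=v\}$ is finite; take $m\geq j_1+\max_i|\lambda_i|$. If $q$ were an initial segment of $\lambda_iq$, the periodicity $x_s=x_{s+|\lambda_i|}$ (valid for $j_0\leq s\leq m-|\lambda_i|$) chains to give $s(x_{j_0+k|\lambda_i|})=v$ for the largest $k$ with $j_0+k|\lambda_i|\leq m$, and that index exceeds $m-|\lambda_i|\geq j_1$, contradicting the maximality of $j_1$. The paper instead avoids the issue by a different case split: its second case assumes that \emph{no segment of $x$ after $x_n$ is a loop}, so that $\lambda_i$ being a prefix of $q$ is immediately a contradiction with no appeal to wandering at all, while tails that do contain a loop segment $\rho=x_p\ldots x_m$ (your problematic scenario) are sent to its first case, where Lemma \ref{lem:CondkInfLoops} supplies loops disjoint from $\rho$ itself, which are then inserted before $\rho$; disjointness of the resulting paths is then automatic. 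Your insert-and-rejoin construction is the same as the paper's; it is the partition of cases (wandering versus non-wandering, rather than loop-segment versus no-loop-segment) that puts the hard subcase on the wrong side of your argument.
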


\begin{proof}
    We divide in three cases. First, suppose that for some $n<p<m$, $x_p\ldots x_m$ is a loop. Then applying Lemma \ref{lem:CondkInfLoops}, we find the paths $\alpha_i$.
    
    Now suppose, that there is no segment after $x_n$ of $x$ that is a loop, but there exists $p>n$ and a loop $\rho$ based on $s(x_{p+1})$. We find three disjoint loops $\alpha'_2,\alpha'_3,\alpha'_4$ based on $s(x_{p+1})$. By choosing $m>p+\max_{i=2,3,4}{|\alpha'_i|}$, we have that $x_{p+1}\ldots x_m$ is disjoint from each $\alpha'_i$, because of the choice of $m$ and the fact that no segment of $x$ after $x_n$ is a loop. In this case $\alpha_1=x_{n+1}\ldots x_m$ and $\alpha_i=x_{n+1}\ldots x_p\alpha'_ix_{p+1}\ldots x_m$ for $i=2,3,4$ are mutually disjoint.
    
    Finally, if there is no loop based at any $s(x_i)$ for $i>m$, then we can apply Lemma \ref{lem:Winfdisjointpaths} to find the appropriate paths.
\end{proof}

Suppose that $\GG$  satisfies conditions (K),(W) and ($\infty$).
We want to employ \cite[Theorem 6.2]{MR3950815} to show that we have an equivalence between isomorphisms of ultragraph groupoids and isomorphisms of their topological full groups. For this we need to show that the class of all pairs $(\Gamma,X)$, where $\Gamma$ is a subgroup of $\llbracket \Gs\rrbracket$ containing the commutator subgroup and $X$ is the edge shift space, is a faithful class of space-group pairs (see Definition \ref{dfn:SpaceGroupPairsFaithful}). By \cite[Theorem 6.6]{MR3950815} for such a class to be faithful it is sufficient to show that the following properties are satisfied (see also \cite[Definition 6.3]{MR3950815}): 
\begin{enumerate}
    \item[(F1)] For $x\in X$ and any clopen neighborhood $A\subset X$ of  $x$ there exists an involution $\phi\in \Gamma$ such that $x\in\supp(\phi)$ and $\supp(\phi)\subseteq A$. 
    \item[(F2)] For any involution  $\phi\in \Gamma\backslash \{1\}$, and any non-empty clopen set $A\subseteq \supp(\phi)$, there exists a $\psi\in \Gamma\backslash \{1\}$ such that $\supp(\psi)\subseteq A\cup \phi(A)$ and $\phi(x)=\psi(x)$ for every $x\in \supp(\psi)$.
    \item[(F3)] For any non-empty clopen set $A\subset X$, there exists $\phi\in\Gamma$ such that $\supp(\phi)\subseteq A$ and $\phi^2\neq 1$.
\end{enumerate}
Let $K^F$ denote the class of all space-group pairs that satisfy Conditions F1, F2 and F3 above.

The following theorem extends \cite[Theorem 10.3]{MR3950815} to ultragraphs. The structure of the the proof is essentially the same as that of \cite[Theorem 10.3]{MR3950815} (which is based on Matui's proof \cite[Proposition 3.6]{MR3377390}), since we now have the description of the full bisections and topological full group of $\Gs$ in an ultragraph context (Proposition \ref{prop:characbisecandfullgrp}).  

\begin{teorema} \label{thm:main1KWInf}
Let $\GG$ be an ultragraph with no sinks and that satisfies Condition (RFUM). Let $\Gamma$ be a subgroup of $\llbracket \Gs\rrbracket$ containing the commutator subgroup $D(\llbracket \Gs\rrbracket)$. Then $(\Gamma, \Gs^{(0)})\in K^F$  if and only if $\GG$  satisfies Conditions (K), (W) and ($\infty$).
\end{teorema}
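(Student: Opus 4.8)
**The plan is to prove the biconditional by establishing each direction separately, with the forward direction (properties F1--F3 imply Conditions (K), (W), ($\infty$)) proved by contraposition, and the reverse direction (Conditions imply F1--F3) proved by explicit construction of the required involutions.**

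For the reverse direction, which I expect to be the substantive part, I would fix $\GG$ satisfying (K), (W), ($\infty$) and verify F1, F2, F3 in turn. The key tool is Proposition~\ref{prop:characbisecandfullgrp}, which lets me realize elements of $\llbracket \Gs \rrbracket$ concretely via disjoint ultrapaths, together with Lemma~\ref{NonEmptyIntersecDfnsInvolution}, which turns a compact bisection $V$ with $s(V) \cap r(V) = \emptyset$ into an involution $\pi_{\hat V}$. The crucial input is Lemma~\ref{lem:infdisjointpaths}: given any $x \in \mathfrak{p}^\infty$ and any $n$, it supplies four mutually disjoint ultrapaths $(\alpha_i, \{s(x_{m+1})\})$. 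For \textbf{F1}, given $x$ and a clopen neighborhood $A$, I would shrink $A$ to a basic set $D_{(\beta,B)F}$ containing $x$, use Lemma~\ref{lem:infdisjointpaths} to produce two disjoint ultrapaths $(\beta\alpha_1, C)$ and $(\beta\alpha_2, C)$ inside it, and form the involution swapping $D_{(\beta\alpha_1,C)}$ and $D_{(\beta\alpha_2,C)}$ via $\hat V$ where $V = \mathcal{Z}(\beta\alpha_1, \beta\alpha_2, C, \emptyset)$; then $x \in \supp(\phi) \subseteq A$. For \textbf{F3}, the same construction inside $A$ produces $\phi$ with $\phi^2 \ne 1$, since with four disjoint paths available I can build a $3$-cycle or a product of two swaps that is not an involution. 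The main subtlety — and where the ultragraph case genuinely differs from graphs — is that I must carefully arrange the $A$-components of the ultrapaths (the generalized vertices) to agree, using Lemma~\ref{lem:sameunitsameMIE} and Lemma~\ref{lem:cilynderIntersections} to ensure the bisections are well-defined and that $s(V)\cap r(V) = \emptyset$ really holds; Condition~($\infty$) is what guarantees I can do this at minimal infinite emitters, where only finitely many edges might otherwise be usable.

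For \textbf{F2}, I would take the given involution $\phi \in \Gamma \setminus \{1\}$ and a nonempty clopen $A \subseteq \supp(\phi)$, decompose $A$ into basic cylinders via Lemma~\ref{lem:disjointunionofcylindersets}, shrink to a single $D_{(\beta,B)F} \subseteq A$ on which $\phi$ acts as prescribed by Proposition~\ref{prop:characbisecandfullgrp} (say $\phi(\be_i \xi) = \al_i \xi$), and define $\psi$ to be the partial homeomorphism agreeing with $\phi$ on a suitable sub-bisection and the identity elsewhere, so that $\supp(\psi) \subseteq A \cup \phi(A)$ and $\psi = \phi$ on $\supp(\psi)$. The fact that $\Gamma$ contains $D(\llbracket \Gs \rrbracket)$ must be used to guarantee the constructed elements actually lie in $\Gamma$ (not merely in $\llbracket \Gs \rrbracket$); the standard move, following Matui and \cite[Theorem 10.3]{MR3950815}, is to realize each required element as a commutator of elements of $\llbracket \Gs \rrbracket$ supported on disjoint sets — and this is precisely why four disjoint paths, rather than two, are needed in Lemma~\ref{lem:infdisjointpaths}.

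For the forward direction I argue contrapositively: if (K) fails there is a vertex with exactly one simple loop, yielding (by Proposition~\ref{prop:isoltedpoints} and the analysis of eventually periodic points) a clopen set on which the full group acts with too little freedom to satisfy F3 or F1; if (W) fails there is a wandering path that is essentially rigid, again obstructing F1/F3 on a neighborhood of the corresponding isolated or near-isolated point; and if ($\infty$) fails at a minimal infinite emitter $A$, then $\{e \in \varepsilon(A): r(e)\mathfrak{p} A \ne \emptyset\}$ is finite, so only finitely many of the relevant return paths exist and one cannot produce the disjoint ultrapaths needed for F2 or F3 on a neighborhood of a point in $D_{(A,A)}$. \textbf{I expect the main obstacle to be the verification of F2}, since it requires not just producing \emph{some} nontrivial element but one that \emph{agrees with the given $\phi$} on its support while being confined to $A \cup \phi(A)$; controlling the generalized-vertex components so that the sub-bisection of $\phi$ is itself a legitimate element of $\Gamma$ — and simultaneously handling the case where $A$ meets a minimal infinite emitter, where Condition~($\infty$) must be invoked to find enough room — is the delicate point that has no direct analogue in the graph setting.
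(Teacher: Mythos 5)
Your skeleton matches the paper's proof (contrapositive for necessity, explicit commutator constructions for sufficiency, with Proposition~\ref{prop:characbisecandfullgrp}, Lemma~\ref{NonEmptyIntersecDfnsInvolution} and Lemma~\ref{lem:infdisjointpaths} as the main tools), but two steps would fail as written. First, your verification of (F1) only covers $x\in\mathfrak{p}^{\infty}$: Lemma~\ref{lem:infdisjointpaths} is a statement about infinite paths and gives nothing at a point $x=(\be,B)\in X_{fin}$. At such a point, any element of $\llbracket\Gs\rrbracket$ whose support contains $x$ must send $x$ to an ultrapath $(\al,B)$ with the \emph{same} minimal infinite emitter $B$, so one needs three pairwise disjoint paths returning to $B$ whose first edges avoid the prescribed finite set $F$; this is precisely (and only) where Condition~($\infty$) enters the sufficiency direction, and the bisections must carry finite exclusion sets, e.g.\ $V=\mathcal{Z}(\be\al^{(1)},\be,B,H)\sqcup\mathcal{Z}(\be\al^{(2)},\be\al^{(3)},B,H)$ with $H=F\cup\{\al^{(1)}_1,\al^{(2)}_1,\al^{(3)}_1\}$. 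Your remark that ($\infty$) helps ``at minimal infinite emitters'' points at the right place, but this case is a genuinely different construction that must be spelled out. Second, your explicit constructions land in $\llbracket\Gs\rrbracket$ rather than in $\Gamma$: a single swap $\pi_{\hat V}$ (your F1) and ``the restriction of $\phi$ to a sub-bisection, identity elsewhere'' (your F2) need not be commutators. You do state the commutator fix at the end, but then both constructions must be redone: for F1 take four disjoint cylinders, $V$ a union of two swaps, $W$ a swap meeting both, and $\pi=[\pi_{\hat V},\pi_{\hat W}]$, which is a product of two disjoint transpositions whose support contains $x$; for F2 append two disjoint paths $\la_1,\la_2$ to both $\al_j$ and $\be_j$ and build a commutator that agrees with $\phi$. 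Note also that ``commutators of elements supported on disjoint sets'' cannot be the mechanism: disjointly supported homeomorphisms commute and give the trivial commutator; the supports of $\hat V$ and $\hat W$ must overlap.

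The necessity argument for ($\infty$) is also incorrect as sketched: its failure does not obstruct F2 or F3 --- it obstructs F1. Put $F:=\{e\in\varepsilon(B)\,:\,r(e)\mathfrak{p}B\neq\emptyset\}$ (finite by assumption) and $A=D_{(B,B)F}$. Condition F3 only requires \emph{some} non-involution supported \emph{somewhere} inside $A$, and such elements may well exist deep inside the cylinders $D_{(e,r(e))}$ with $e\in\varepsilon(B)\setminus F$, depending on the ultragraph beyond those edges; so no contradiction with F3 is available. What fails is F1 at the specific point $x=(B,B)$: an involution $\phi$ with $x\in\supp(\phi)\subseteq A$ would force, via a basic bisection $\mathcal{Z}(\al,\be,C,\emptyset)$ through points near $x$, an ultrapath $(\al,B)\in D_{(B,B)F}$ with $\al\neq B$, whence $\al_1\in\varepsilon(B)\setminus F$ while $r(\al_1)\mathfrak{p}B\neq\emptyset$, contradicting the definition of $F$. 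The same correction applies to your hedged claims for (K) and (W): in all three necessity arguments it is F1 that fails (for (K), any $\pi_U$ supported in $D_{(\gamma,r(\gamma))}$ with $\gamma^{\infty}$ in its support must shift by a nonzero power of the unique simple loop and hence cannot be an involution; this says nothing against F3).
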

\begin{proof}
We first show that Conditions (K) and (W) imply Conditions (F2) and (F3), and then that (K), (W) and ($\infty$) are necessary and sufficient for (F1) to hold. 

Assume Conditions (K) and (W) are satisfied. We first show (F3). Let $A\subset X$ be a non-empty clopen set. Then there is $x\in A$ and a cylinder set $D_{(\be,B)F}\subseteq A$ containing $x$, for some $(\be,B)\in\mathfrak{p}$ and finite set $F\subset \varepsilon(B)$. Since $\GG$ does not have sinks, there is an edge $e\in \GG^1$ with $s(e)\in B$. Then $D_{(\be e,r(e))}\subseteq D_{(\be,B)F}$. So we may assume without loss of generality that for any non-empty clopen set $A\subset X$ there is an ultrapath $(\be,B)\in\mathfrak{p}$ such that $D_{(\be,B)}\subseteq A$ (without a set $F\subset \varepsilon(B)$). Since $\GG$ has no sinks, we have that there exists at least one infinite path $x\in\mathfrak{p}^{\infty}$ such that $s(x)\in B$. By Lemma~\ref{lem:infdisjointpaths}, there are three disjoint paths $\al_1,\al_2,\al_3$ based at $B$ such that $C=r(\al_1)\cap r(\al_2)\cap r(\al_3)\neq \emptyset$.  Let  $V=\mathcal{Z}(\be\al_1,\be\al_2,C, \emptyset)$ and $W=\mathcal{Z}(\be\al_2,\be\al_3,C,\emptyset)$, and define $\pi_{\hat{V}}$ and $\pi_{\hat{W}}$ as in Lemma~\ref{NonEmptyIntersecDfnsInvolution}. Now define $\La=[\pi_{\hat{V}},\pi_{\hat{W}}]$.
Let $y\in X$ be such that $s(y)\in C$. Then  $\La(\be\al_3y)=\be\al_2y, \La(\be\al_2y)=\be\al_1y$ and $\La(\be\al_1y)=\be\al_3y$ (under the convention that $[g,h]=g^{-1}h^{-1}gh)$. Hence $\La^2\neq\mathrm{id}$ and $\La^3=\mathrm{id}$. Also, since $D_{(\be\al_i,C)}\subseteq D_{(\be,B)}\subseteq A$ for $i=1,2,3$, it follows that $\supp(\La)\subseteq A$. Hence (F3) is satisfied. 

Next we show that (F2) is satisfied. Let $\tau\in \Gamma\backslash \{1\}$ and let $A\subseteq \supp(\tau)$ be a non-empty clopen set. It follows from Proposition \ref{prop:characbisecandfullgrp} that $\tau=\pi_U$ where $U$ is a full bisection in $\Gs$ and can be written as 
 $$ U=\left( \bigsqcup_{i=1}^{N} \mathcal{Z}(\al_i,\be_i,A_i,F_{A_i}) \right) \bigsqcup \left(X\backslash \supp(\pi_U)  \right). $$
Similarly to the first part of the proof we can find an ultrapath $(\gamma,B)\in\mathfrak{p}$ such that $D_{(\gamma,B)}\subseteq A\cap D_{(\be_j,A_j)F_{A_j}} $ for some index $1\leq j\leq N$, and two disjoint paths $\la_1$ and $\la_2$ based at  $B$ such that $C=r(\la_1)\cap r(\la_2)\neq \emptyset$.  We may assume without loss of generality that $|\gamma|>|\beta_j|$, so that  $\gamma=\be_j\rho$ for some $\rho\in \GG^*$ with $|\rho|\geq 1$ and $\rho_1\notin F_{A_j}$. Define the following bisections 
\begin{equation*}
\begin{split}
    & V = \mathcal{Z}(\be_j\rho\la_1,\be_j\rho\la_2,C,\emptyset)\sqcup \mathcal{Z} (\al_j\rho\la_1,\al_j\rho\la_2,C,\emptyset) \\
    & W = \mathcal{Z}(\al_j\rho\la_1,\be_j\rho\la_2,C,\emptyset).
\end{split}
\end{equation*}
Since $\tau=\pi_U$ is an involution we have that $\tau(\be_j x)=\al_j x$, for $\beta_j x\in D_{(\be_j,A_j)F_{A_j}} $, and  $\tau(\al_j x)=\be_j x$, for $\alpha_j x\in D_{(\al_j,A_j)F_{A_j}} $. Define $\La=[\pi_{\hat{V}},\pi_{\hat{W}}]$. Then $\La\in \Gamma$, and 
\begin{equation*}
\begin{split}
    \supp(\La)& = D_{(\be_j\rho\la_1)}\sqcup D_{(\be_j\rho\la_2)}\sqcup D_{(\al_j\rho\la_1)}\sqcup D_{(\al_j\rho\la_2)} \\
    &\subseteq D_{(\gamma,B)} \cup \tau(D_{(\gamma,B)})  \\
    &\subseteq A\cup \tau(A).
\end{split}
\end{equation*}
Since both $\tau$ and $\La$ interchange the initial paths $\al_j$ and $\be_j$, it follows that they agree on $\supp(\La)$. Hence (F2) is satisfied.

Next we show that (F1) holds if and only if Conditions (K), (W) and ($\infty$) hold.
Since properties (F1) and (F3) fail in the presence of isolated points, we assume for the remainder of the proof that that $\GG$ has no sinks, satisfies (RFUM), has no semi-tails and satisfies Condition~(L) (see Proposition \ref{prop:isoltedpoints}), and we fix an $x\in X$ and a clopen neighborhood $A\subset X$ of $x$. 

Assume that Conditions (K), (W) and ($\infty$) hold. First assume that $x\in\mathfrak{p}^{\infty}$. By choosing $m$ big enough we may assume that $D_{(x_1\ldots x_m,r(x_m))}\subset A$. By Lemma~\ref{lem:infdisjointpaths}  
we can find three mutually disjoint paths $\be^{(1)},\be^{(2)},\be^{(3)}\in s(x_{m+1})\mathfrak{p}s(x_{n+1})$, all of which are also disjoint from $x_{m+1}\ldots x_n$. Now, we put $\al^{(1)}=x_1\ldots x_m\be^{(1)}$, $\al^{(2)}=x_1\ldots x_m\be^{(2)}$, $\al^{(3)}=x_1\ldots x_m\be^{(3)}$ and $\al^{(4)}=x_1\ldots x_n$. Note that $B=r(\al^{(1)})\cap r(\al^{(2)}) \cap r(\al^{(3)}) \cap r(\al^{(4)})\neq \emptyset$. Define bisections 
\begin{equation*}
\begin{split}
    V&=\mathcal{Z}(\al^{(1)},\al^{(2)},B,\emptyset)\sqcup \mathcal{Z}(\al^{(3)},\al^{(4)},B,\emptyset), \text{ and } \\
    W&= \mathcal{Z}(\al^{(1)},\al^{(3)},B,\emptyset).
\end{split}
\end{equation*}
Then $\pi=[\pi_{\hat{V}},\pi_{\hat{W}}]\in \Gamma$ with $\supp(\pi)=\sqcup_{i=1}^{4}D_{(\al^{(i)},B)}\subseteq D_{(x_1\ldots x_m,r(x_m))}\subset A $, $\pi^2=\mathrm{id}$ and $x \in D_{(\al^{(4)},B)}\subseteq \supp(\pi)$. Hence (F1) is satisfied. 

Secondly, assume that $x=(\be,B)\in X_{fin}$. Since $B$ is a minimal infinite emitter, there is a finite set $F\subset \varepsilon(B)$ such that $D_{(\be,B)F}\subseteq A$.  Condition ($\infty$) implies that  $|\{e\in \varepsilon(B):r(e) \mathfrak{p} B\neq \emptyset\}|=\infty$. Hence we can find three disjoint loops $\al^{(1)},\al^{(2)},\al^{(3)}$ based at $B$ such that the edges $\al^{(i)}_1\notin F,i=1,2,3$. Put $H=F\cup\{\al^{(1)}_1,\al^{(2)}_1,\al^{(3)}_1\}$ and define 
\begin{equation*}
\begin{split}
    V&=\mathcal{Z}(\be\al^{(1)},\be,B,H)\sqcup \mathcal{Z}(\be\al^{(2)},\be\al^{(3)},B,H), \text{ and } \\
    W&= \mathcal{Z}(\be\al^{(1)},\be\al^{(2)},B,H).
\end{split}
\end{equation*}
Then $\pi=[\pi_{\hat{V}},\pi_{\hat{W}}]\in \Gamma$ with $\supp(\pi)=\sqcup_{i=1}^{3}D_{(\be\al^{(i)},B)H}\sqcup D_{(\be,B)H}\subseteq D_{(\be,B)F}\subset A $, $\pi^2=\mathrm{id}$ and $x \in D_{(\be,B)H}\subseteq \supp(\pi)$. Hence (F1) is satisfied.

Next assume properties (F1)-(F3) hold. We show this implies conditions (K),(W) and $(\infty)$. We do this separately for each. We begin with Condition (K). Assume that $\GG$ dos not satisfy condition (K). Then there is a vertex $v\in G^0$ with a single simple loop $\gamma$ based at $v$. Since $\GG$ satisfies condition (L) the loop $\gamma$ has an exit. However, since we assume that there are no sinks, there exits an edge $e\in \GG^1$ such that $s(e)=s(\gamma_i)$ and $e\neq \gamma_i$, for some $i\in\N$. Let $x=\gamma^{\infty}$ and $A=D_{(\gamma,r(\gamma))}$. We show that (F1) does not hold for this pair. Assume there is a $\pi_U\in \llbracket \Gs\rrbracket$ such that $\gamma^{\infty}\in \supp(\pi_U)\subseteq D_{(\gamma,r(\gamma))}$. Then there is a basic open neighborhood $\mathcal{Z}(\al,\be,B,\emptyset)\subseteq U$ (Proposition \ref{prop:characbisecandfullgrp}) with $(\al,B),(\be,B)\in\mathfrak{p}$, $\al\neq\be$ and $\gamma^{\infty}\in D_{(\be,B)}\subset D_{(\gamma,r(\gamma))}$. Hence $\be=\gamma^m\rho$, for some $m\in\N$ and $\rho\in\GG^*$ with $|\rho|<|\be|$. Therefore, by extending $\al$ and $\be$ if necessary we may assume that $\be=\gamma^m$.  Similarly, $D_{(\al,B)}\subset D_{(\gamma,r(\gamma))}$ and we may assume that $\al=\gamma^n$. However, since $\al\neq \be$ and $\gamma$ is the only simple loop based at $v$, it follows that $m\neq n$. Let $z\in \varepsilon(r(e))$. Then $(\pi_U)^2(\gamma^{2m}ez)=\gamma^{2n}ez\neq \gamma^{2m}ez$. Hence $\pi_U$ is not an involution, which implies that (F1) is not satisfied. 

We show next the necessity off Condition~(W) for (F1) to be satisfied. Assume that Condition (W) does not hold. Then there is an infinite wandering path $x=x_1x_2\cdots $ such that 
\begin{equation}\label{eqn:NotConditionW}
\{(\be,C)\in s(x)\mathfrak{p} s(x_{i+1})\mid \be\neq x_1\ldots x_i\}=\emptyset   
\end{equation}
for every $i\in\N$. Put $A=D_{(x_1,r(x_1))}$, and suppose that $\pi_U\in \llbracket \Gs\rrbracket$ is such that $x\in \supp(\pi_U)\subseteq D_{(x_1,r(x_1))}$. Then there is a basic open neighborhood $\mathcal{Z}(\al,\be,B,\emptyset)\subseteq U$ (Proposition \ref{prop:characbisecandfullgrp}) with $(\al,B),(\be,B)\in\mathfrak{p}$, $\al\neq\be$ and $x\in D_{(\be,B)}\subseteq\supp(\pi_U)\subseteq D_{(x_1,r(x_1))}$. Hence $\be=x_1\cdots x_m$ for some $m\in \N$. Since $D_{(\al,B)}\subseteq\supp(\pi_U)\subseteq D_{(x_1,r(x_1))}$, it follows that $s(x)=s(\al)=s(\be)$ and $s(x_{m+1})\in B$. Thus $(\al,B)\in \{(\be,C)\in s(x)\mathfrak{p} s(x_{i+1})\mid \be\neq x_1\ldots x_i\} $, which contradicts  (\ref{eqn:NotConditionW}), because $\al\neq \be=x_1\cdots x_m$. Thus (F1) is not satisfied. 

Finally, we show the necessity of Condition~($\infty$) for (F1) to be satisfied. Assume that Condition~($\infty$) does not hold. Then there is a minimal infinite emitter $B\in\GG^0$ such that the set $F:=\{e\in \varepsilon(B):r(e) \mathfrak{p} B\neq \emptyset\}$  is finite. Put $x=(B,B)$ and let $A=D_{(B,B)F}$. We claim that (F1) fails for this pair. To see this, suppose that $\pi_U\in \llbracket \Gs\rrbracket$ is such that $x\in \supp(\pi_U)\subseteq D_{(B,B)F}$. Then there is a basic open neighborhood $\mathcal{Z}(\al,\be,C,\emptyset)\subseteq U$ (Proposition \ref{prop:characbisecandfullgrp}) with $(\al,C),(\be,C)\in\mathfrak{p}$, $\al\neq\be$, $(B,B)\in D_{(\al,C)}\subseteq D_{(B,B)F}$  and $(B,B)\in D_{(\be,C)}\subseteq D_{(B,B)F}$. Since $(B,B)\in D_{(\al,C)}\subseteq D_{(B,B)F}$  and $(B,B)\in D_{(\be,C)}\subseteq D_{(B,B)F}$, it follows that $B\subseteq C$ and that $\al=\be=B$, which contradicts that $\al\neq\be$ and completes the proof.
\end{proof}



\begin{lema}\label{lem:InfOrbit}
Let $\GG$  be an ultragraph with no sinks that satisfy Conditions (RFUM), (K), (W) and ($\infty$).  Then $\mathrm{Orb}_{\Gs}(x)$  is infinite for each $x\in X$. 
\end{lema}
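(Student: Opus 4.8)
The plan is to make the orbit completely explicit using the description of $\Gs$. Since $(y,k,x)\in\Gs$ precisely when $\sigma^{m}(y)=\sigma^{n}(x)$ for some $m,n$ with $k=m-n$, and such an arrow has source $x$ and range $y$, the orbit $\mathrm{Orb}_{\Gs}(x)$ is exactly the shift-tail-equivalence class of $x$, i.e.\ the set of $y\in X$ for which $\sigma^{m}(y)=\sigma^{n}(x)$ for some $m\le|y|$, $n\le|x|$. It therefore suffices to exhibit, for each $x$, infinitely many pairwise distinct $y$ that are shift-tail-equivalent to $x$. I would split the argument according to whether $x\in\mathfrak{p}^{\infty}$ or $x\in X_{fin}$, since $X=\mathfrak{p}^{\infty}\cup X_{fin}$.

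For $x=x_1x_2\cdots\in\mathfrak{p}^{\infty}$, the idea is to repeatedly ``branch off and return'' via Lemma~\ref{lem:infdisjointpaths}. Applying it with a given $n$ produces $m>n$ and four paths $\alpha_1=x_{n+1}\ldots x_m,\alpha_2,\alpha_3,\alpha_4$ with $s(x_{m+1})\in r(\alpha_i)$ and $s(\alpha_i)=s(x_{n+1})$, such that the ultrapaths $(x_1\ldots x_n\alpha_i,\{s(x_{m+1})\})$ are mutually disjoint. Setting $y_i=x_1\ldots x_n\,\alpha_i\,\sigma^{m}(x)$ yields valid infinite paths with $\sigma^{n+|\alpha_i|}(y_i)=\sigma^{m}(x)$, so each $y_i\in\mathrm{Orb}_{\Gs}(x)$; they are pairwise distinct because they lie in the pairwise disjoint cylinders $D_{(x_1\ldots x_n\alpha_i,\{s(x_{m+1})\})}$, disjoint ultrapaths having disjoint cylinder sets. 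As $\alpha_1$ merely reproduces $x$, this gives three \emph{new} orbit elements deviating from $x$ before coordinate $m$. I would then iterate, applying the lemma next with starting index $m$: the resulting elements agree with $x$ up to coordinate $m$ and deviate afterwards, hence differ from all previously constructed ones, which already deviated earlier. Continuing this way produces infinitely many distinct elements of $\mathrm{Orb}_{\Gs}(x)$.

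For $x\in X_{fin}$, write $x=(\beta,B)$ with $B$ a minimal infinite emitter (allowing $|\beta|=0$, i.e.\ $x=(B,B)$), so that $\sigma^{|\beta|}(x)=(B,B)$. Here I would invoke Condition~($\infty$): there are infinitely many edges $e\in\varepsilon(B)$ with $r(e)\mathfrak{p} B\neq\emptyset$. For each such $e$ choose $(\mu,C)\in r(e)\mathfrak{p} B$, so $s(\mu)\in r(e)$ and $B\subseteq C\subseteq r(\mu)$; then $\beta e\mu$ is a legitimate finite path with $B\subseteq r(\beta e\mu)$ and $B\in M_{\beta e\mu}$, whence $y_e:=(\beta e\mu,B)\in X_{fin}$. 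Since $\sigma^{|\beta e\mu|}(y_e)=(B,B)=\sigma^{|\beta|}(x)$, we get $y_e\in\mathrm{Orb}_{\Gs}(x)$, and distinct edges $e$ give paths differing in coordinate $|\beta|+1$, hence distinct $y_e$. As infinitely many admissible $e$ exist, the orbit is infinite.

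I expect the main obstacle to be the bookkeeping in the infinite-path case, namely verifying that successive applications of Lemma~\ref{lem:infdisjointpaths} genuinely yield new orbit elements (the ``deviate later each time'' argument) and that every concatenation is a valid path landing in a disjoint cylinder. The $X_{fin}$ case should be more routine, the only delicate points being the verification that $(\beta e\mu,B)$ really is an element of $X_{fin}$ (that $B$ remains a minimal infinite emitter inside $r(\beta e\mu)$) and that the associated triple genuinely lies in $\Gs$ despite $\sigma$ being undefined on length-zero ultrapaths.
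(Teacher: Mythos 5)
Your proof is correct, and it is in substance the proof the paper intends — but note that the paper itself gives no argument at all: its ``proof'' is the single line that the proof is exactly the same as the graph-case result \cite[Lemma 10.9]{MR3950815}. Your write-up is therefore the ultragraph adaptation that the paper leaves implicit, following the same strategy as the cited graph lemma: identify $\mathrm{Orb}_{\Gs}(x)$ with the shift-tail-equivalence class of $x$; for $x\in\mathfrak{p}^{\infty}$ produce infinitely many tail-equivalent paths by repeatedly branching off and returning via Lemma \ref{lem:infdisjointpaths}; and for $x=(\beta,B)\in X_{fin}$ use Condition ($\infty$) to produce infinitely many distinct returns to the minimal infinite emitter $B$. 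What your version adds is precisely the bookkeeping where ultragraphs differ from graphs: distinctness of the constructed elements comes from disjointness of ultrapaths (hence disjointness of the cylinders $D_{(\cdot,\cdot)}$, via the paper's remark following Definition \ref{dfn:ultraDisjoint}) rather than from distinct edges at a vertex, the nested-cylinder argument makes the ``deviates later each round'' iteration rigorous, and the singular case requires checking that $B\in M_{\beta e\mu}$ (which holds since $B\subseteq C\subseteq r(\mu)=r(\beta e\mu)$ and $B$ is a minimal infinite emitter) and that the resulting triple lies in $\Gs$. These are exactly the points you flag, and they all go through; your proposal is a faithful and more self-contained rendering of the proof the paper cites.
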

\begin{proof}
The proof is exactly the same as \cite[Lemma 10.9]{MR3950815}.
\end{proof}

We can now state the first main result of this section. 

\begin{teorema}\label{thm:MainIsoUGs}
Let $\GG$ and $\mathcal{F}$ be ultragraphs with no sinks that satisfy Conditions (RFUM), (K), (W) and ($\infty$). Let $\Gamma$ be a subgroup of $\llbracket \Gs\rrbracket$ containing the commutator subgroup $D(\llbracket \Gs\rrbracket)$ and let $\La$ be a subgroup of $\llbracket \mathcal{F}_\sigma\rrbracket$ containing the commutator subgroup $D(\llbracket \mathcal{F}_\sigma\rrbracket)$. Then the following are equivalent:
\begin{enumerate}
    \item $\Gs\cong \mathcal{F}_\sigma$ as topological groupoids. 
    \item $\llbracket \Gs\rrbracket \cong \llbracket \mathcal{F}_\sigma\rrbracket$  as abstract groups.
    \item $D(\llbracket \Gs\rrbracket) \cong D(\llbracket \mathcal{F}_\sigma\rrbracket)$  as abstract groups.
\end{enumerate}
\end{teorema}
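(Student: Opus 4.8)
The plan is to prove the cycle of implications $(1)\Rightarrow(2)\Rightarrow(3)\Rightarrow(1)$, in which the first two steps are routine and the entire weight of the argument falls on the last. For $(1)\Rightarrow(2)$ I would begin with a topological groupoid isomorphism $\Psi\colon\Gs\to\mathcal{F}_\sigma$. Such a $\Psi$ restricts to a homeomorphism $\psi\colon\Gs^{(0)}\to\mathcal{F}_\sigma^{(0)}$ of unit spaces, carries compact open full bisections to compact open full bisections, and commutes with the source and range maps; since conjugation by $\psi$ sends $\pi_U$ to $\pi_{\Psi(U)}$ and preserves compactness of supports, the assignment $\pi_U\mapsto\psi\circ\pi_U\circ\psi^{-1}$ is a group isomorphism $\llbracket\Gs\rrbracket\to\llbracket\mathcal{F}_\sigma\rrbracket$. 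The implication $(2)\Rightarrow(3)$ is then immediate, because the commutator subgroup is characteristic: any group isomorphism $\llbracket\Gs\rrbracket\cong\llbracket\mathcal{F}_\sigma\rrbracket$ must send $D(\llbracket\Gs\rrbracket)$ onto $D(\llbracket\mathcal{F}_\sigma\rrbracket)$, and hence restricts to an isomorphism of the commutator subgroups.

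The substance is $(3)\Rightarrow(1)$, which I would obtain by invoking the abstract reconstruction result \cite[Theorem 6.2]{MR3950815}. The first task is to place both pairs in a faithful class. Taking $\Gamma=D(\llbracket\Gs\rrbracket)$ and $\La=D(\llbracket\mathcal{F}_\sigma\rrbracket)$ (each of which trivially contains the relevant commutator subgroup), Theorem \ref{thm:main1KWInf} shows that $(D(\llbracket\Gs\rrbracket),\Gs^{(0)})$ and $(D(\llbracket\mathcal{F}_\sigma\rrbracket),\mathcal{F}_\sigma^{(0)})$ both lie in $K^F$, and \cite[Theorem 6.6]{MR3950815} guarantees that $K^F$ is faithful. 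Consequently the given isomorphism $D(\llbracket\Gs\rrbracket)\cong D(\llbracket\mathcal{F}_\sigma\rrbracket)$ is spatially realized by a homeomorphism $\phi\colon\Gs^{(0)}\to\mathcal{F}_\sigma^{(0)}$ intertwining the two group actions on the unit spaces.

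It then remains to verify the remaining hypotheses of \cite[Theorem 6.2]{MR3950815} and to upgrade $\phi$ to a groupoid isomorphism. Both groupoids are effective: Condition (K) implies Condition (L), since a loop without an exit would force a vertex to carry exactly one simple loop, contradicting (K), so effectiveness follows from Proposition \ref{prop:effectiveCharac}. Moreover, by Lemma \ref{lem:InfOrbit} every orbit in $\Gs^{(0)}$ and $\mathcal{F}_\sigma^{(0)}$ is infinite, which supplies the infinite-orbit hypothesis. With effectiveness and infinite orbits in place, \cite[Theorem 6.2]{MR3950815} promotes the spatial realization $\phi$ to a topological groupoid isomorphism $\Gs\cong\mathcal{F}_\sigma$, closing the cycle. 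I expect this final upgrade to be the main obstacle: the faithfulness only yields that $\phi$ conjugates the two group actions on the unit spaces, and it is precisely the combination of effectiveness (so that germs are recoverable from the action) together with the infinitude of orbits that forces $\phi$ to respect the full groupoid structure. Everything in this section has been arranged so that the pairs meet exactly the hypotheses under which \cite[Theorem 6.2]{MR3950815} delivers this conclusion.
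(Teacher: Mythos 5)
Your proposal is correct and follows essentially the same route as the paper: both place the pairs in $K^F$ via Theorem \ref{thm:main1KWInf}, invoke the faithfulness of $K^F$ from \cite[Theorem 6.6]{MR3950815} to spatially realize the group isomorphism, and then use Lemma \ref{lem:InfOrbit} together with the reconstruction results of \cite{MR3950815} (Proposition 6.2, Lemma 4.9, Proposition 4.10 — what you bundle into ``Theorem 6.2'') to upgrade to a groupoid isomorphism. Your explicit cycle $(1)\Rightarrow(2)\Rightarrow(3)\Rightarrow(1)$ and the observation that (K) implies (L) (hence effectiveness) merely spell out steps the paper leaves to the cited results.
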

\begin{proof}
Let $X$ denote the edge shift space of $\GG$ and let $Y$ be the edge shift space of $\mathcal{F}$. 
It follows from Theorem \ref{thm:main1KWInf} that $(\Gamma, X)\in K^F$ and $(\Lambda, Y)\in K^F$. By \cite[Theorem 6.6]{MR3950815} the class $K^F$ is a faithful class of space-group pairs. The result now follows directly from  \cite[Proposition 6.2]{MR3950815}, Lemma \ref{lem:InfOrbit},  \cite[Lemma 4.9]{MR3950815} and \cite[Proposition 4.10]{MR3950815}. 
\end{proof}

\begin{exemplo}
We give an example of an ultragraph $\GG $  with property (RFUM) such that the ultragraph $C^*$-algebra associated with $\GG$ is not isomorphic to any graph $C^*$-algebra \cite[Remark 4.4]{MR2020023}, and which satisfies Conditions (K), (W) and ($\infty$). 
    Let $\GG$ be the ultragraph associated with the matrix $A$ given by  $$A(i,j)=\begin{cases} 
    1, \text {if $i=j$, or $i=j+2$, or $i\in\{1,2\}$ and $j\geq 3$},\\
    0 \text{ otherwise.}
    \end{cases}
    $$
    Then $\GG$ is given by a countable number of vertices, say $\{v_i\}$, and a countable number of edges, say $\{e_i\}$, such that $s(e_i)=v_i$ for all $i$, $r(e_1)= \{v_i:i\neq 2\}$, $r(e_2)= \{v_i:i\neq 1\}$, and, for $n\geq 3$,
    $r(e_n)= \{v_{n-2}, v_n\}$. It is straigtforward to check that $\GG$ satisfies Condition~(K). The only minimal infinite emitter is $r(e_1)\cap r(e_2)$ and Condition~$(\infty)$ follows. Finally notice that $\GG$ has no wandering path, and thus satisfies Condition~(W).
\end{exemplo}

Our next goal is to prove another isomorphism theorem with slightly weaker conditions than in Theorem \ref{thm:MainIsoUGs}. However, as a result of this weakening we lose the isomorphism of subgroups (as is already evident in the case of graphs, \cite[Theroem 10.11]{MR3950815}).

\begin{definicao}
    $\GG$ satisfies Condition (T) if for every vertex  $v\in G^0$, there exists a vertex $w\in G^0$ such that
    \begin{equation}\label{conditionT}
        |\{\be\in\GG^*\mid\text{there exists }C\text{ such that }(\be,C)\in \{v\}\mathfrak{p} \{w\}|\geq 2.
    \end{equation}
\end{definicao}

\begin{lema}
Let $\GG$  be an ultragraph with no sinks that satisfies Condition~(RFUM). The groupoid $\Gs$ is non-wandering if and only if $\GG$ satisfies Conditions (L) and (T).
\end{lema}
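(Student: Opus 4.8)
The plan is to reformulate the dynamical notion of a wandering set in terms of the shift tail-equivalence relation on $X$, and then to prove necessity by contraposition (separately for each of (L) and (T)) and sufficiency directly. The first step is to compute orbits: for $g=(x',k,x)\in\Gs$ one has $s(g)=x$ and $r(g)=x'$, so that
$$\mathrm{Orb}_{\Gs}(x)=\{x'\in X\mid \sigma^m(x')=\sigma^n(x)\text{ for some }m,n\}$$
is exactly the shift tail-equivalence class of $x$. Hence a clopen set $A$ is wandering precisely when no two distinct points of $A$ are shift tail-equivalent, and $\Gs$ is non-wandering precisely when every non-empty clopen set contains two distinct tail-equivalent points. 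This reformulation is what makes the combinatorial conditions accessible.

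For sufficiency, assuming (L) and (T), I would take an arbitrary non-empty clopen $A$ and, exactly as in the reduction at the start of the proof of Theorem \ref{thm:main1KWInf} (using that $\GG$ has no sinks), find a cylinder $D_{(\be,B)}\subseteq A$ with $|\be|\geq 1$ and $B=r(\be)$. Choosing a vertex $v\in B$, Condition (T) supplies a vertex $w$ and two distinct paths $\mu_1\neq\mu_2$ from $v$ with $w\in r(\mu_1)\cap r(\mu_2)$; picking any infinite path $\gamma$ with $s(\gamma)=w$, the points $\be\mu_1\gamma$ and $\be\mu_2\gamma$ lie in $D_{(\be,B)}$ and are tail-equivalent with common tail $\gamma$. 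The only subtlety is distinctness: if $\mu_1,\mu_2$ have equal length or are incomparable they are automatically distinct, and the single problematic case $\mu_2=\mu_1\nu$ with $s(\nu)=w$ makes $\nu$ a loop based at $w$, where Condition (L) provides an exit so that $\gamma$ can be chosen with $\gamma\neq\nu^\infty$, forcing $\be\mu_1\gamma\neq\be\mu_2\gamma$. Thus $A$ is not wandering.

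For necessity I argue contrapositively on each condition. If (L) fails there is a loop without an exit, so by Proposition \ref{prop:isoltedpoints}(1) some $\gamma^\infty$ is isolated; the clopen singleton $\{\gamma^\infty\}$ is wandering (every singleton is, since $x\in\mathrm{Orb}_{\Gs}(x)$ always), so $\Gs$ is wandering. If (T) fails there is a vertex $v$ such that for every $w$ at most one path from $v$ has $w$ in its range; the key observation is that this forces any two distinct finite paths starting at $v$ (prefixes included) to have disjoint ranges. I would then show that for any edge $e$ with $s(e)=v$ the cylinder $D_{(e,r(e))}$ is a non-empty clopen wandering set: if distinct $x,y$ in this cylinder satisfied $\sigma^m(x)=\sigma^n(y)$, the common shifted source vertex would lie in the ranges of two distinct finite paths starting at $v$, contradicting the disjoint-range property.

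The main obstacle is precisely this last verification, which must be carried out across all cases: $m=n$ versus $m\neq n$, and the presence of finite-length points of $X_{fin}$ inside the cylinder. One must check that a point of $X_{fin}$ can only be tail-equivalent to another point of $X_{fin}$ (a finite ultrapath and an infinite path can never have a common shift, since one tail is finite and the other infinite), and that in each case the equality of shifts again produces a vertex reached by two distinct paths from $v$, contradicting the failure of (T). Once both contrapositives are established, combining them with the sufficiency direction yields the stated equivalence.
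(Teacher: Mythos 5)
Your proposal is correct and takes essentially the same route as the paper: necessity of (L) via an isolated point supplied by Proposition \ref{prop:isoltedpoints}, necessity of (T) via a wandering cylinder based at the failing vertex $v$ using the uniqueness-of-paths observation, and sufficiency by locating a cylinder $D_{(\be,B)}\subseteq A$, applying Condition (T) to get two distinct paths with a common tail, and invoking Condition (L) to guarantee the two resulting points are distinct. The only differences are cosmetic: you take the wandering set to be $D_{(e,r(e))}$ for an edge $e$ with $s(e)=v$ where the paper takes $D_{(\{v\},\{v\})}$, and you spell out the sufficiency and infinite-path arguments that the paper imports by citation from \cite[Proposition 10.7]{MR3950815}.
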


\begin{proof}
The proof follows the same line of \cite[Proposition 10.7(i)]{MR3950815}. To show that Conditions (L) and (T) imply that $\Gs$ is non-wandering, we merely have to consider our cylinder set $D_{(\mu,r(\mu))}$, instead of $Z(\mu)$ in \cite[Proposition 10.7(i)]{MR3950815}.

If condition (L) does not hold, then $X$ has isolated points, by Proposition \ref{prop:isoltedpoints}, and thus $\Gs$ is wandering.

Now, suppose Condition (T) is not true and let $v$ be a vertex where the condition fails; that is, for each vertex $w$, either there is no path connecting $v$ to $w$ or there exists only one path connecting $v$ to $w$. For $(\alpha,A)\in X$ such $s(\alpha)=v$, since $\alpha$ connects $v$ to any element of $A$, $\alpha$ is the only path with this property. So if $(\beta,B)\in D_{(\{v\},\{v\})}\cap \text{Orb}_{\Gs}(\alpha,A)$, then $s(\beta)=v$ and $B=A$, so that $\beta$ connects $v$ to any element of $A$, and therefore $\beta=\alpha$. And for an infinite path $x\in X$, the proof that $(\beta,B)\in D_{(\{v\},\{v\})}=\{x\}$ is the same as in \cite[Proposition 10.7]{MR3950815}. We again conclude that $\Gs$ is wandering.

\end{proof}

\begin{definicao}\label{def:degenerate}
    Let $\GG$ be an ultragraph with no sinks. We say that a minimal infinite emitter $A\in\GG^0$ is degenerate if it satisfies one of the two following conditions:
    \begin{itemize}
        \item[IE1.] $v$ is a source for every $v\in A$.
        \item[IE2.] There exists a unique $v\in A$ such that $v$ is not a source, and for this $v$ we have that $\GG^1v=\{e\}$ and $s(e)$ is a source.
    \end{itemize}
    And we say that a vertex $v\in G^0$ is degenerate if one of the following conditions is satsified:
    \begin{itemize}
        \item[V1.] $\GG^1v=\{e\}=v\GG^1v$.
        \item[V2.] $\GG^1v=\{e,f\}$, where $e\in v\GG^1$ and $s(f)$ is a source.
        \item[V3.] There exists a vertex $w$ different from $v$ such that there are edges $e,f$ with $\GG^1v=\{f\}=w\GG^1v$ and $\GG^1w=\{e\}=v\GG^1w$.
    \end{itemize}
    
    Finally, $\GG$ satisfies Condition (ND) if there are no degenerate infinite emitters and no degenerate vertices.
\end{definicao}

\begin{lema}\label{OrbCard}
    Let $\GG$  be an ultragraph with no sinks that satisfies Condition~(RFUM). Then $|\mathrm{Orb}_{\Gs}(x)|\geq 3$ for all $x\in X$ if and only if $\GG$ satisfies Condition (ND).
\end{lema}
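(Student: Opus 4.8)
The plan is to prove the contrapositive equivalence: there exists $x\in X$ with $|\mathrm{Orb}_{\Gs}(x)|\leq 2$ if and only if $\GG$ has a degenerate minimal infinite emitter or a degenerate vertex. Throughout I use the identification of $X$ with $\Gs^{(0)}$ under which $y\in\mathrm{Orb}_{\Gs}(x)$ exactly when $\sigma^m(y)=\sigma^n(x)$ for some $m,n\geq 0$, and the fact that, since $\sigma$ is undefined on paths of length zero, the forward shifts $\sigma^0(x),\sigma^1(x),\dots$ themselves already lie in $\mathrm{Orb}_{\Gs}(x)$.

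First I would assume a degenerate configuration and exhibit a point with at most two orbit elements. For a degenerate infinite emitter $A$ I take the length-zero point $(A,A)$, using the description $\mathrm{Orb}_{\Gs}((A,A))=\{(A,A)\}\cup\{(\mu,A):|\mu|\geq 1,\ A\subseteq r(\mu)\}$: Condition~IE1 (every $v\in A$ is a source) forces $A\not\subseteq r(\mu)$ for all $\mu$, so the orbit is $\{(A,A)\}$, while Condition~IE2 permits at most the single prepend $(e,A)$, since the hypothesis that $s(e)$ is a source blocks any extension to a longer $\mu$. For a degenerate vertex I take an eventually periodic path and perform the analogous prepending check: $e^\infty$ under V1 gives orbit $\{e^\infty\}$; $e^\infty$ under V2 gives $\{e^\infty,fe^\infty\}$, with $s(f)$ a source blocking further prepending; and $(ef)^\infty$ under V3 gives $\{(ef)^\infty,(fe)^\infty\}$, the only in-edges to $s(e)$ and $s(f)$ merely cycling between these two points.

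For the converse I would start from $x$ with $|\mathrm{Orb}_{\Gs}(x)|\leq 2$ and split on whether $x$ is finite or infinite. If $x=(\alpha,A)\in X_{fin}$ is a finite path, its $|\alpha|+1$ distinct forward shifts all lie in the orbit, forcing $|\alpha|\leq 1$ and $\mathrm{Orb}_{\Gs}(x)=\mathrm{Orb}_{\Gs}((A,A))$; it therefore suffices to analyze the length-zero point. Here I would first record the structural consequence of no sinks and Condition~(RFUM) that a minimal infinite emitter is either a single infinitely-emitting vertex or an infinite set lying inside the range of an edge, and then convert the cardinality bound on $\{(\mu,A):A\subseteq r(\mu)\}$ into a constraint on which edges carry $A$ in their range and on the sources of those edges, reading off IE1 or IE2. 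If $x\in\mathfrak{p}^\infty$, distinctness of forward shifts forces $x$ to be eventually periodic along a simple loop $\gamma$; the orbit bound then caps the length of $\gamma$ and the pre-period, and analyzing the admissible in-edges to $s(x)$ yields one of V1, V2 or V3.

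I expect the converse, and in particular its infinite-emitter sub-case, to be the main obstacle. Unlike the graph setting of \cite[Proposition 10.7]{MR3950815}, edge ranges are now \emph{sets}, so Condition~(RFUM) together with minimality must be used carefully to identify exactly which edges contain $A$ in their range and to exclude intermediate configurations; and one must accommodate the ultragraph phenomenon, already visible in Lemma~\ref{lem:CondkInfLoops}, that two simple loops at a vertex need not be disjoint. Once the prepending analysis for generalized vertices is correctly set up, the enumeration producing V1--V3 is routine, so the delicate work is the bookkeeping that matches the orbit-cardinality bound precisely to the degeneracy conditions of Definition~\ref{def:degenerate}.
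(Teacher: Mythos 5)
Your outline is the same as the paper's: identify the orbit relation with ``same tail'', enumerate the orbits of cardinality one and two, and match them to the conditions of Definition~\ref{def:degenerate}; your forward direction (a degenerate configuration produces a point with orbit of size at most two) is carried out correctly, in more detail than the paper gives. The genuine gap is in the converse, precisely in the infinite-emitter sub-case that you flag and then defer. Your own structural observation defeats the plan: if $A$ is an \emph{infinite} minimal infinite emitter then, writing $A$ as a finite union of finite intersections of ranges and singletons and invoking minimality, one gets $A\subseteq r(f)$ for some edge $f$; hence every vertex of $A$ lies in $r(f)$, so no vertex of $A$ is a source. Consequently neither IE1 nor IE2 (both of which demand sources inside $A$) can ever hold for such an $A$, while the orbit of $(A,A)$ can nevertheless have cardinality two. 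So the deferred step of ``reading off IE1 or IE2'' cannot be carried out: what the orbit bound actually yields for infinite $A$ is a condition on the set $\{e\in\GG^1 \mid A\subseteq r(e)\}$ (it is empty, or a single edge whose source is a source), and this is not equivalent to IE1/IE2 as written.

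Concretely: let $G^0=\{u\}\cup\{v_i\}_{i\geq 1}$, let $f$ be an edge with $s(f)=u$ and $r(f)=A:=\{v_i\mid i\geq 1\}$, and let $h_i$ be edges with $s(h_i)=v_i$, $r(h_i)=\{v_{i+1}\}$. This ultragraph has no sinks and satisfies (RFUM), and $A$ is a minimal infinite emitter. Here $\mathrm{Orb}_{\Gs}\bigl((A,A)\bigr)=\{(A,A),(f,A)\}$, since $f$ is the only edge whose range contains $A$ and $s(f)=u$ lies in no range; yet $A$ satisfies neither IE1 nor IE2 (all the $v_i$ are non-sources), and no vertex satisfies V1, V2 or V3 (the only vertex with a unique in-edge is $v_1$, that in-edge comes from $u$, and there are no loops), so Condition (ND) holds. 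Thus the matching you postpone is not mere bookkeeping: as stated, the correspondence between the orbit bound and IE1/IE2 genuinely breaks for infinite minimal infinite emitters. (The paper's own one-line inference that an orbit of the form $\{(A,A),(e,A)\}$ ``gives'' IE2 makes the same unjustified leap, so your proposal inherits rather than repairs the weak point.) To close your argument one must recast IE1/IE2 in terms of the edges $e$ with $A\subseteq r(e)$ --- a formulation that agrees with IE1/IE2 when $A$ is a singleton --- after which your prepending analysis does go through.
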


\begin{proof}
    Recalling the definition of orbit given in Subsection \ref{subsec:groupoid.full.group}, we see that $y\in \mathrm{Orb}_{\Gs}(x)$ if and only if $x$ and $y$ have the same tail. 

    An element $x$ such that $|\mathrm{Orb}_{\Gs}(x)|=1$ must be of the form $x=e^{\infty}$, where $s(e)$ is a degenerate vertex satisfying V1 from Definition \ref{def:degenerate}, or it is of the form $x=(A,A)$, where $A$ is a minimal infinite emitter satisfying IE1 from Definition \ref{def:degenerate}.
    
    Now, if $x$ is such that $|\mathrm{Orb}_{\Gs}(x)|=2$, then there are three possibilities for $\mathrm{Orb}_{\Gs}(x)$, namely, $\{(A,A),(e,A)\}$, $\{e^
    {\infty},fe^{\infty}\}$ or $\{(ef)^{\infty},(fe)^{\infty}\}$, from where we get Conditions IE2, V2 and V3 of Definition \ref{def:degenerate}, respectively.
\end{proof}

\begin{teorema}\label{main2}
    Let $\GG$ and $\mathcal{F}$ be ultragraphs with no sinks that satisfy Conditions (RFUM), (L), (T) and (ND). Then the following are equivalent:
    \begin{enumerate}
    \item $\Gs\cong \mathcal{F}_\sigma$ as topological groupoids. 
    \item $\llbracket \Gs\rrbracket \cong \llbracket \mathcal{F}_\sigma\rrbracket$  as abstract groups.
\end{enumerate}
\end{teorema}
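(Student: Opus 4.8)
The plan is to follow the template of the proof of Theorem~\ref{thm:MainIsoUGs}, replacing the faithful-class input (which rested on Conditions (K), (W) and ($\infty$)) by the version of the reconstruction machinery of \cite{MR3950815} behind their Theorem~10.11, which requires only that the groupoids be effective, non-wandering, and have all orbits of cardinality at least three. Throughout, let $X$ and $Y$ denote the edge shift spaces of $\GG$ and $\mathcal{F}$, respectively.

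The implication $(1)\Rightarrow(2)$ is the soft direction. Any isomorphism of topological groupoids $\Phi\colon\Gs\to\mathcal{F}_\sigma$ restricts to a homeomorphism $\phi\colon X\to Y$ of unit spaces and carries compact open full bisections to compact open full bisections, with $s(\Phi(U))=\phi(s(U))=Y$. Since $\Phi$ intertwines source and range, $\pi_{\Phi(U)}\circ\phi=\phi\circ\pi_U$, so $\pi_{\Phi(U)}=\phi\circ\pi_U\circ\phi^{-1}$; and as $\supp(\phi\circ\pi_U\circ\phi^{-1})=\phi(\supp(\pi_U))$ is compact whenever $\supp(\pi_U)$ is, the assignment $\pi_U\mapsto\pi_{\Phi(U)}$ is a well-defined group isomorphism $\llbracket\Gs\rrbracket\to\llbracket\mathcal{F}_\sigma\rrbracket$, with inverse induced by $\Phi^{-1}$.

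For $(2)\Rightarrow(1)$, I would first verify that both groupoids meet the standing hypotheses of the relevant abstract theorem. By Proposition~\ref{prop:effectiveCharac}, Condition (L) makes $\Gs$ and $\mathcal{F}_\sigma$ effective; the lemma characterizing when the groupoid is non-wandering shows that Conditions (L) and (T) together make both groupoids non-wandering; and Lemma~\ref{OrbCard} converts Condition (ND) into the assertion that every orbit in $X$ and in $Y$ has at least three points. With these three properties established, a given abstract group isomorphism $\llbracket\Gs\rrbracket\cong\llbracket\mathcal{F}_\sigma\rrbracket$ is implemented by a homeomorphism of the unit spaces which extends to a topological groupoid isomorphism $\Gs\cong\mathcal{F}_\sigma$, by the reconstruction result of \cite{MR3950815} used to prove their Theorem~10.11.

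The logical skeleton is therefore short; the real work, as in the previous results of this section, lies in confirming that the ultragraph groupoids genuinely satisfy every hypothesis, and the delicate point is the orbit-cardinality condition. In contrast to the infinite-orbit Lemma~\ref{lem:InfOrbit} underlying Theorem~\ref{thm:MainIsoUGs}, here Condition (ND) only guarantees orbits of size at least three, which is precisely the threshold at which the weaker reconstruction theorem operates. This is also the structural reason the conclusion is weaker: the commutator subgroups are no longer recovered by this coarser reconstruction, so no statement analogous to item (3) of Theorem~\ref{thm:MainIsoUGs} about $D(\llbracket\Gs\rrbracket)$ can be read off.
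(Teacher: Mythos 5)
Your proposal is correct and follows essentially the same route as the paper: the paper's proof likewise reduces to the abstract reconstruction theorem \cite[Theorem 7.10]{MR3950815}, with Condition (ND) converted into the orbit-cardinality hypothesis via Lemma \ref{OrbCard}, and with Conditions (L) and (T) supplying effectiveness and non-wandering through Proposition \ref{prop:effectiveCharac} and the preceding lemma (facts the paper leaves implicit but you spell out). Your explicit treatment of the soft direction $(1)\Rightarrow(2)$ and your closing remark on why the commutator-subgroup statement is lost are consistent with, and slightly more detailed than, the paper's one-line argument.
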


\begin{proof}
    Since the groupoids $\Gs$ and $\mathcal{F}_\sigma$ are ample, the result follows immediately from Lemma \ref{OrbCard} and \cite[Theorem 7.10]{MR3950815}.
\end{proof}



\section{Acknowledgements}

The majority of this paper was completed while the third author worked at Universidade Federal de Santa Catarina under the guidance of the first two authors. He thanks them for their guidance and warm hospitality.

\bibliographystyle{abbrv}
\bibliography{references}

\vspace{1.5pc}

Gilles Gon\c{c}alves de Castro, Departamento de Matem\'atica, Universidade Federal de Santa Catarina, Florian\'opolis, 88040-900, Brazil.

Email: gilles.castro@ufsc.br	

\vspace{0.5pc}

Daniel Gon\c{c}alves, Departamento de Matem\'atica, Universidade Federal de Santa Catarina, Florian\'opolis, 88040-900, Brazil.

Email: daemig@gmail.com

\vspace{0.5pc}

Daniel W van Wyk, Department of Mathematics, Dartmouth College, Hanover, NH 03755-3551, USA.

Email: daniel.w.van.wyk@dartmouth.edu

\end{document}